\definecolor{untgreen}{RGB}{5,144,51}
\definecolor{cublue}{RGB}{75,146,219}
\definecolor{cugold}{RGB}{207,184,124}
\renewcommand\subsection{\@startsection{subsection}{2}%
  \z@{.35\linespacing\@plus.7\linespacing}{.25\linespacing}%
  {\normalfont\itshape}}
\numberwithin{equation}{section}
\theoremstyle{plain}
\newtheorem{lemma}{Lemma}[section]
\newaliascnt{proposition}{lemma}
\newtheorem{proposition}[proposition]{Proposition}
\newaliascnt{theorem}{lemma}
\newtheorem{theorem}[theorem]{Theorem} 
\newtheorem*{theorem*}{Theorem}
\newaliascnt{corollary}{lemma}
\newtheorem{corollary}[corollary]{Corollary} 
\newtheoremstyle{citing}
  {3pt}
  {3pt}
  {\itshape}
  {}
  {\bfseries}
  {.}
  {.5em}
  {\thmnote{#3}}
\theoremstyle{citing}
\def\equationautorefname~#1\null{(#1)\null}
\def\itemautorefname~#1\null{(#1)\null}
\def\sectionautorefname~#1\null{\S#1\null}
\def\subsectionautorefname~#1\null{\S#1\null}
 \newcommand\CE{{\mathcal E}}
\newcommand\CF{{\mathcal F}} \newcommand\CG{{\mathcal G}}
\newcommand\CH{{\mathcal H}} \newcommand\CL{{\mathcal L}}
\newcommand\CO{{\mathcal O}}
\newcommand\fb{{\mathfrak b}} 
\newcommand\fg{{\mathfrak g}} \newcommand\fh{{\mathfrak h}}
\newcommand\fp{{\mathfrak p}} 
\newcommand\fu{{\mathfrak u}} \newcommand\fl{{\mathfrak l}}
\newcommand\FN{{\mathfrak N}} 
 \newcommand\fv{{\mathfrak v}}
\newcommand\BBC{{\mathbb C}} 
\newcommand\BBF{{\mathbb F}} \newcommand\BBP{{\mathbb P}}
\newcommand\BBQ{{\mathbb Q}} 
\newcommand\BBZ{{\mathbb Z}}
 \newcommand\Cbar{\overline{C}}
 \newcommand\Gbar{\overline{G}}
\newcommand\Hbar{\overline{H}} 
\newcommand\ktilde{{\widetilde{k}}} \newcommand\Lbar{\overline{L}}
\newcommand\ptilde{{\tilde {p}}} \newcommand\Tbar{\overline{T}}
\newcommand\Xtilde{\widetilde X}
\newcommand\Ytilde{\widetilde Y} \newcommand\Ztilde{{\widetilde Z}}
\DeclareMathOperator{\Ad}{Ad} 
\DeclareMathOperator{\Aut}{Aut} 
\DeclareMathOperator{\gr}{gr}
\DeclareMathOperator{\Hom}{Hom} 
\DeclareMathOperator{\Lie}{Lie}
\newcommand{\red}{\operatorname{r}} 
\DeclareMathOperator{\res}{res}
\newcommand\id{{id}}
\newcommand\inverse{^{-1}}
\newcommand\CHIJ{{\mathcal H}^{IJ}}
\newcommand\etaIJ{\eta^{IJ}}
\newcommand\etaEJ{\eta^{\emptyset J}}
\newcommand\FNt{\widetilde{\FN}}
\newcommand\IzJ{I\cap{}^zJ}
\newcommand\LIbar{\overline{L}_I}
\newcommand\LJbar{\overline{L}_J}
\newcommand\OIbar{{\overline{\CO}_I}} 
\newcommand\Osbar{{\overline{\CO}_{\{s\}}}} 
\newcommand\SN{\mathfrak N} 
\newcommand\SNt{\widetilde{\SN}}
\newcommand\ssleq{{\scriptscriptstyle \leq}}
\newcommand\sslt{{\scriptscriptstyle <}}
\newcommand\Waf{W_{\operatorname{af}}}
\newcommand\Wex{W_{\operatorname{ex}}}
\newcommand\WIJ{{W^{IJ}}} 
\newcommand\XEJ{X^{\emptyset J}}
\newcommand\XIE{{X^{I \emptyset}}}
\newcommand\XIJ{X^{IJ}}
\newcommand\ZIbar{{\overline{Z}_I}} 
\newcommand\Zsbar{{\overline{Z}_{\{s\}}}} 
\begin{document}


\title[Equivariant K-theory] {Equivariant K-theory of
  generalized Steinberg varieties}

\author[J. M. Douglass]{J. Matthew Douglass} 
\address{Department of  Mathematics\\University of North Texas\\Denton TX,
  USA 76203}
\email{douglass@unt.edu} \urladdr{http://hilbert.math.unt.edu}

\author[G. R\"ohrle]{Gerhard R\"ohrle}
\address{Fakult\"at f\"ur Mathematik\\Ruhr-Universit\"at Bochum\\D-44780
  Bochum, Germany} 
\email{gerhard.roehrle@rub.de}
\urladdr{http://www.ruhr-uni-bochum.de/ffm/Lehrstuehle/Lehrstuhl-VI}

\subjclass[2010]{Primary 20G05; Secondary 14L30 20C08}

\keywords{Equivariant $K$-theory, Hecke algebra, Steinberg variety}

\begin{abstract}
  We describe the equivariant $K$-groups of a family of generalized
  Steinberg varieties that interpolates between the Steinberg variety of a
  reductive, complex algebraic group and its nilpotent cone in terms of the
  extended affine Hecke algebra and double cosets in the extended affine
  Weyl group. As an application, we use this description to define
  Kazhdan-Lusztig ``bar'' involutions and Kazhdan-Lusztig bases for these
  equivariant $K$-groups.
\end{abstract}

\maketitle

\section{Introduction}

Suppose $G(\BBF_q)$ is a Chevalley group defined over the finite field
$\BBF_q$. A fundamental result in the classification of irreducible complex
representations of $G(\BBF_q)$ is the classification of representations that
contain a vector fixed by a Borel subgroup $B(\BBF_q)$ of $G(\BBF_q)$. These
representations are completely determined, and their characters may be
computed, using the centralizer ring or Hecke algebra $\CH(G(\BBF_q),
B(\BBF_q))$. Iwahori \cite{iwahori:structure} conjectured that
$\CH(G(\BBF_q), B(\BBF_q))$ is isomorphic to the group algebra of the Weyl
group $W$ of $G(\BBF_q)$. An explicit isomorphism between $\CH(G(\BBF_q),
B(\BBF_q))$ and the group algebra of $W$ was constructed by Lusztig
\cite{lusztig:theorem}.  More generally, irreducible representations that
contain a vector fixed by a parabolic subgroup $P_I(\BBF_q)$ of $G(\BBF_q)$
are determined by the parabolic Hecke algebra $\CH(G(\BBF_q), P_I(\BBF_q))$.
Curtis, Iwahori, and Kilmoyer \cite{curtisiwahorikilmoyer:hecke} showed that
this last algebra is isomorphic to the Hecke algebra $\CH(W, W_I)$, where
$W_I$ is the corresponding parabolic subgroup of $W$, and Curtis
\cite{curtis:isomorphism} extended Lusztig's construction to obtain an
explicit isomorphism between $\CH(G(\BBF_q), P_I(\BBF_q))$ and $\CH(W,W_I)$.

Now suppose $G(\BBQ_p)$ is a Chevalley group defined over $\BBQ_p$. In this
case, one important class of representations consists of those
representations that contain a vector fixed by an Iwahori subgroup of
$G(\BBQ_p)$. These representations are again classified by a Hecke algebra,
this time the extended affine Hecke algebra $\CH$ of the complex dual group,
$\check G$.  Kazhdan and Lusztig \cite{kazhdanlusztig:langlands} construct
an isomorphism between $\CH$ and the equivariant $K$-theory of the Steinberg
variety of $\check G$. They then use this isomorphism to give a construction
of the irreducible representations of $\CH$. In this paper we extend their
construction and explicitly describe the equivariant $K$-groups of the
generalized Steinberg varieties $\XIJ$ from~\cite{douglassroehrle:geometry}
in terms of the Hecke algebra $\CH$.  When $I=J$, the subspace of $\CH$ we
consider is, up to an involution, the extension to $\CH$ of the subalgebra
$\CH(G(\BBF_q), P_I(\BBF_q))$ of $\CH(G(\BBF_q), B(\BBF_q))$.

In another direction, it follows from \cite[Theorem 5.1.3]
{douglassroehrle:homology} and \cite[Theorem 2.5]
{douglassroehrle:steinberg} that the rational Borel-Moore homology of $\XIJ$
may be computed algebraically as the space of $W_I\times W_J$-invariants in
the smash (semidirect) product of the coinvariant algebra of $W$ with the
group algebra of $W$. The results in this paper may be viewed as the
extension of this computation to the more refined level of equivariant
$K$-theory and the affine Hecke algebra.

From now on, suppose that $G$ is a connected, reductive complex algebraic
group such that the derived group of $G$ is simply connected. Set
$\fg=\Lie(G)$. For $g\in G$ and $x\in \fg$ write $gx$ instead of
$\Ad(g)(x)$, where $\Ad$ is the adjoint representation of $G$. Define $\FN$
to be the cone of nilpotent elements in $\fg$ and let $B$ be a fixed Borel
subgroup of $G$ with Lie algebra $\fb$. Then, the Steinberg variety of $G$
is the variety $Z$ of all triples $(x,gB,hB)$ in $\FN\times G/B \times G/B$
such that $g\inverse x, h\inverse x\in \fb$. Based on a construction of
Kazhdan and Lusztig \cite{kazhdanlusztig:langlands}, Chriss and Ginzburg
\cite {chrissginzburg:representation} and Lusztig \cite{lusztig:bases} have
shown that there is an algebra structure on $K^{G\times \BBC^*}(Z)$, the
$G\times \BBC^*$-equivariant $K$-group of $Z$, such that $K^{G\times
  \BBC^*}(Z)$ is isomorphic to the extended, affine Hecke algebra $\CH$
associated to $G$. Ostrik \cite{ostrik:equivariant} used this isomorphism to
describe $K^{G\times \BBC^*}(\FN)$ in terms of $\CH$ and to define a
Kazhdan-Lusztig ``bar'' involution, and a Kazhdan-Lusztig basis, of
$K^{G\times \BBC^*}(\FN)$. As indicated above, in this paper we describe the
equivariant $K$-groups of the generalized Steinberg varieties $\XIJ$ in
terms of $\CH$. These generalized Steinberg varieties interpolate between
$Z=X^{\emptyset \emptyset}$ and $\FN=X^{SS}$ ($S$ is the Coxeter generating
set for $W$ determined by $B$).  We then use our description to define
Kazhdan-Lusztig ``bar'' involutions and Kazhdan-Lusztig bases of the
equivariant $K$-groups $K^{G\times \BBC^*}(\XIJ)$.

The proof of the main theorem in this paper (\autoref{thm:main}) relies on
Ostrik's computation of $K^{G\times \BBC^*}(\FN)$.  For a generalized
Steinberg variety $\XIJ$ we use a filtration of $K^{G\times \BBC^*}(\XIJ)$
indexed by $G$-orbits in the product of two partial flag varieties.  In the
special case of $\FN$, there is a single $G$-orbit, the filtration of
$K^{G\times \BBC^*}(\FN)$ is trivial, and Ostrik has computed $K^{G\times
  \BBC^*}(\FN)$ in terms of $\CH$. In the general case, each associated
graded piece has the form $K^{L'\times \BBC^*}(\FN')$, where $L'$ is a Levi
factor of a parabolic subgroup of $G$ and $\FN'$ is the nilpotent cone in
$\Lie(L')$. Thus, each of these graded pieces is described using Ostrik's
theory.

In \autoref{sec:main} we give the basic constructions and state the main
theorem relating the extended affine Hecke algebra and the equivariant
$K$-theory of generalized Steinberg varieties. Assuming facts that are
proved in subsequent sections, \autoref{sec:proof} contains a proof
of~\autoref{thm:main}. The constructions of the ``standard basis,'' the
``bar'' involution, and the Kazhdan-Lusztig basis are given in
\autoref{sec:kl}. In \autoref{sec:kthy} we review the intersection/Tor
product construction in the form it is used in this paper. The final three
sections contain proofs of the main ingredients used in the proof
of~\autoref{thm:main}.

\subsection{Notation and conventions}
With $G$ and $B$ as above, fix a maximal torus $T$ contained in $B$. Let
$W=N_G(T)/T$ be the Weyl group of $(G,T)$, let $S$ be the set of simple
reflections in $W$ determined by the choice of $B$, and let $X(T)$ be the
character group of $T$. Then $X(T)$ is a free abelian group and $W$ acts on
$X(T)$ as group automorphisms. We use additive notation for $X(T)$ and
consider the root system $\Phi$ of $(G,T)$ as a subset of $X(T)$. The roots
corresponding to the root subgroups in $B$ determine a positive system
$\Phi^+$, and a base $\Pi$, of $\Phi$. If $s$ is in $S$, then $s=s_\alpha$
for a unique $\alpha$ in $\Pi$.

Let $H$ be a complex linear algebraic group.  We use the convention that a
lowercase fraktur letter denotes the Lie algebra of the group denoted by the
same uppercase roman letter, so for example, $\fh= \Lie(H)$. Let $R(H)$
denote the representation ring of $H$ and let $X(H)$ denote the character
group of $H$. Define $\overline H$ to be the product of $H$ with the
one-dimensional complex torus $\BBC^*$, so $\overline H= H\times
\BBC^*$. Let $v\colon \Hbar \to \BBC^*$ be the character defined by
$v(h,z)=z$ for $h\in H$ and $z\in \BBC^*$ and let $A=\BBZ[v,v\inverse]$ be
the subring of $R(\Hbar)$ generated by $v$. With this notation, there is a
natural isomorphism $R(\Hbar)\cong A\otimes_{\BBZ} R(H)$.  In particular,
\[
R(\Tbar) \cong A\otimes_{\BBZ} R(T)\cong A[X(T)]\quad\text{and} \quad
R(\Gbar)\cong A\otimes_{\BBZ} R(G).
\]

When the group $H$ acts on a quasiprojective variety $Y$, let $K^{H}(Y)$
denote the Grothendieck group of the abelian category of $H$-equivariant
coherent sheaves on $Y$. The group $K^{H}(Y)$ is naturally an $R(H)$-module.

Suppose $C$ is a closed subgroup of $H$.  For a $C$-variety $F$, let
$H\times^C F$ denote the quotient of $H\times F$ by the $C$-action given by
$c\cdot (h,y)= (hc\inverse, cy)$ for $c\in C$, $h\in H$, and $y\in F$. The
image of $(h,y)$ in $H\times^C F$ is denoted by $h*y$. The group $H$ acts on
$H\times^C F$ by left multiplication and the projection $f\colon H\times^C
F\to H/C$ given by $h*y\mapsto hC$ is a well-defined $H$-equivariant
morphism. Conversely, suppose $Y$ is an $H$-variety and $f_Y\colon Y\to H/C$
is an $H$-equivariant morphism. Set $F=f_Y\inverse(C)$. Then the map
$m\colon H\times^C F\to Y$ given by $h*y\mapsto hy$ is a well-defined
$H$-equivariant isomorphism such that $f=f_Ym$. Suppose $\Cbar$ acts on $F$.
Then $\Hbar$ acts on both $\Hbar\times^{\Cbar}F$ and $H\times^CF$, and these
varieties are canonically isomorphic $\Hbar$-varieties. It follows from work
of Thomason \cite[Proposition 6.2]{thomason:algebraic} that
$K^{\Hbar}(H\times^CF)$ is naturally isomorphic to $K^{\Cbar}(F)$, and that
if $C_{\red}$ is a reductive subgroup of $C$ such that $C\cong
C_{\operatorname{u}} \rtimes C_{\red}$, where $C_{\operatorname{u}}$ is the
unipotent radical of $C$, then $K^{\Cbar}(F)$ is isomorphic to
$K^{\Cbar_{\red}}(F)$ (see \cite[\S5.2]{chrissginzburg:representation}). Let
\[
\res_F\colon K^{\Hbar}(H\times^CF) \xrightarrow{\ \cong\ }
K^{\Cbar_{\red}}(F)
\]
denote the composition of these two isomorphisms.

Suppose $Y_1$ and $Y_2$ are $H$-varieties with $Y_1\subseteq Y_2$. To
simplify the notation, if $Y_1$ is closed in $Y_2$, then we sometimes denote
the direct image map $K^{H}(Y_1) \to K^H(Y_2)$ simply by $()_*$, and if
$Y_1$ is open in $Y_2$, then we sometimes denote the restriction map
$K^{H}(Y_2) \to K^H(Y_1)$ by $()^*$.

Unless otherwise indicated, we consider $\fg$ as a $\BBC^*$-module with the
action of $\BBC^*$ given by $z\cdot x= z^{-2}x$ for $z$ in $\BBC^*$ and $x$
in $\fg$. Then $\Gbar$ acts on $\FN$ by $(g,z) \cdot x =z^{-2}g\cdot x$. For
a subgroup $P$ of $G$, $\Gbar$ acts on $G/P$ by $(g,z)\cdot hP= gh P$.
Define
\[
\FNt=\{\, (x, gB)\in \FN\times G/B\mid g\inverse x\in \fb\,\}.
\]
As above, the Steinberg variety of $G$ is
\[
Z=\{\, (x, gB, hB) \in \FN \times G/B \times G/B \mid g\inverse x, h\inverse
x\in \fb \,\} \cong \FNt\times_{\FN} \FNt .
\]
Then $\Gbar$ acts on $\FNt$ and $Z$ via the diagonal action, and the
projections
\[
p_Z\colon Z\to \FN,\quad q_Z\colon Z\to G/B \times G/B,\quad p\colon \FNt\to
\FN, \quad\text{and} \quad q\colon \FNt\to G/B
\]
are all $\Gbar$-equivariant. 

\section{Statement of the main theorem }\label{sec:main} 

\subsection{Generalized Steinberg varieties}

For $I\subseteq S$, let $W_I=\langle I \rangle$ be the subgroup of $W$
generated by $I$ and let $P_I$ be the parabolic subgroup of $G$ that
contains $B$ such that $N_{P_I}(T)/T = W_I$. Let $U_I$ denote the unipotent
radical of $P_I$ and let $L_I$ be the Levi factor of $P_I$ that contains
$T$. Then $P_I=L_IU_I$ and $\fp_I=\fl_I + \fu_I$ are Levi decompositions of
$P_I$ and $\fp_I$, respectively.  Define $\Pi_I= \{\, \alpha\in \Pi\mid
s_\alpha\in I\,\}$ and let $\Phi_I$ be the intersection of $\Phi$ with the
span of $\Pi_I$. Then, with respect to the action of $T$, $\Phi_I$ is the
set of roots of $\fl_I$, $\Phi^+\cup \Phi_I$ is the set of roots of $\fp_I$,
and $\Phi^+\setminus \Phi_I$ is the set of roots of $\fu_I$. In the special
case when $I=\emptyset$, $P_I=B$ and we define $U = U_{\emptyset}$.

Each pair of subsets $I, J\subseteq S$ determines a \emph{generalized
  Steinberg variety}
\[
\XIJ= \{\, (x, gP_I, hP_J)\in \FN\times G/P_I \times G/P_J \mid g\inverse
x\in \fp_I,\ h\inverse x\in \fp_J\,\}
\]
(see \cite[\S2]{douglassroehrle:geometry}). Define
\[
\etaIJ\colon Z\to \XIJ \quad\text{by}\quad \etaIJ(x, gB, hB)= (x, gP_I,
hP_J).
\]
Then $\XIJ$ is a $\Gbar$-variety ($\Gbar$ acts diagonally on $\XIJ$) and
$\etaIJ$ is a surjective, proper, $\Gbar$-equivariant morphism.  Notice that
\begin{itemize}
\item if $I=J=\emptyset$, then $\XIJ=Z$ and $\etaIJ$ is the identity, and
\item if $I=J=S$, then $\XIJ \cong \FN$ and $\etaIJ$ may be identified with
  projection $p_Z\colon Z\to \FN$.
\end{itemize}

\subsection{Hecke algebras}

The \emph{Iwahori-Hecke algebra of $W$} is the $A$-algebra $\CH_S$ with
$A$-basis $T_w$, for $w$ in $W$, and multiplication satisfying
\begin{equation}
  \label{eq:std}
  \begin{cases}
    T_w T_{w'}= T_{ww'}& \text{if $\ell(ww')= \ell(w)+ \ell(w')$, and} \\
    T_s^2= v^2 T_1+(v^2-1)T_s&\text{for $s$ in $S$,}
  \end{cases}
\end{equation}
where $\ell$ is the length function on $W$ determined by $S$ and the
subscript $1$ in $T_1$ denotes the identity in $W$ (see
\cite{kazhdanlusztig:coxeter}).

The \emph{extended affine Hecke algebra of $W$} is the $A$-algebra $\CH$
with generators $T_w$, $\theta_\lambda$, for $w$ in $W$ and $\lambda$ in
$X(T)$, and multiplication satisfying
\begin{equation}
  \label{eq:bernstein}
  \begin{cases}
    T_w T_{w'}= T_{ww'}& \text{if $\ell(ww')= \ell(w)+ \ell(w')$,} \\
    T_s^2= v^2 T_1+(v^2-1)T_s&\text{for $s$ in $S$,} \\
    \theta_\lambda \theta_{\mu}= \theta_{\lambda+\mu}& \text{for
      $\lambda, \mu$ in $X(T)$,} \\
    \theta_\lambda T_s -T_s \theta_{s(\lambda)}= (v^2-1) \frac
    {\theta_\lambda-\theta_{s(\lambda)}} {1-\theta_{-\alpha}} &\text{for
      $\lambda$ in $X(T)$ and $s=s_\alpha$ in $S$, and} \\
    \theta_0=T_1 &\text{is the identity in $\CH$ .}
  \end{cases}
\end{equation}
(See \cite[\S1]{lusztig:bases}. Note that for $w$ in $W$, the generator
$T_w$ in the preceding definition is related to the generator $\tilde T_w$
in \cite[\S1]{lusztig:bases} by $\tilde T_w=v^{-\ell(w)} T_w$.)

We identify the $A$-span, in $\CH$, of $\{\,T_w\mid w\in W\,\}$ with the
Iwahori-Hecke algebra $\CH_S$, and we identify the $A$-span, in $\CH$, of
$\{\,\theta_\lambda \mid \lambda \in X(T)\,\}$ with the group algebra
$A[X(T)]$ of $X(T)$. Then $\CH_S$ and $A[X(T)]$ are subalgebras of $\CH$
that contain the identity. The center of $\CH$ is $A[X(T)]^W$ (see
\cite{lusztig:singularities}).  We identify $R(\Gbar)$ with $A[X(T)]^W$, and
hence with the center of $\CH$, via the isomorphism $R(\Gbar) \cong
A[X(T)]^W$ given by associating with a representation of $\Gbar$ its
character in $A[X(T)]$.  The map $A[X(T)]\otimes_A \CH_S\to \CH$ given by
multiplication, $\theta_\lambda\otimes T_w\mapsto \theta_\lambda T_w$, is an
$A$-module isomorphism. We call $\{\, \theta_\lambda T_w\mid \lambda\in
X(T), w\in W\,\}$ the \emph{Bernstein basis} of $\CH$ because it arises from
the Bernstein presentation~\eqref{eq:bernstein}.

For $\lambda$ in $X(T)$, let $t_\lambda$ denote translation by $\lambda$ in
$X(T)$. Then $\{\, t_\lambda\mid \lambda\in X(T)\,\}$ is a subgroup of
$\Aut(X(T))$ isomorphic to $X(T)$. Recall that $W$ acts faithfully on
$X(T)$. Define $\Wex$, the \emph{extended affine Weyl group of $\Phi$,} to
be the subgroup of $\Aut(X(T))$ generated by the image of $W$ and $\{\,
t_\lambda\mid \lambda \in X(T)\,\}$. Then $\Wex$ is isomorphic to the
semi-direct product $X(T) \rtimes W$. We frequently identify $W$ with its
image in $\Aut(X(T))$ and consider $W$ as a subgroup of $\Wex$.

The \emph{affine Weyl group of $\Phi$,} $\Waf$, is the subgroup of
$\Aut(X(T))$ generated by the image of $W$ and $\{\, t_\alpha\mid \alpha \in
\Phi\,\}$. Then $\Waf$ is a normal subgroup of $\Wex$ and there is a finite
abelian subgroup $\Gamma$ of $\Wex$ such that $\Wex= \Waf \Gamma$ and
$\Waf\cap \Gamma =1$. The group $\Waf$ is a Coxeter group with a Coxeter
generating set $S_{\textrm{af}}$ that contains $S$, and $\Gamma$ acts on
$\Waf$ as Coxeter group automorphisms preserving $S_{\textrm{af}}$. Extend
the length function $\ell$ and Bruhat order $\leq$ on $\Waf$ to $\Wex$ by
defining
\[
\ell(y\gamma)= \ell(y)\qquad\text{and} \qquad \text{$y\gamma \leq y'\gamma'$
  if and only if $\gamma=\gamma'$ and $y\leq y'$}
\] 
for $y,y'$ in $\Waf$ and $\gamma, \gamma'$ in $\Gamma$ (see~\cite[\S2]
{lusztig:singularities}).

The algebra $\CH$ has a \emph{standard basis}, $\{\, T_w\mid w\in \Wex\,\}$,
such that the relations~\eqref{eq:std} hold (see \cite[\S1]
{lusztig:bases}). The ``bar'' involution of $\CH$, $\overline{\phantom {x}
}\colon \CH \to \CH$, is the ring automorphism of $\CH$ defined by
$\overline v=v\inverse$ and $\overline{T_x}= T_{x\inverse} \inverse$ for $x$
in $\Wex$. As observed by Lusztig~\cite{lusztig:singularities}, the argument
in the proof of \cite[Theorem 1.1] {kazhdanlusztig:coxeter} can be applied
to show that for $x$ in $\Wex$, there are unique elements $C_x$ and $C_x'$
in $\CH$ such that
\begin{equation}
  \label{eq:cx}
  \begin{cases}
    \overline{C_x}= C_x  \\
    C_x=v_x\inverse T_x+ \sum_{y< x} \epsilon_y\epsilon_x v_x v_y^{-2}
    \overline{P}_{y,x} T_y
  \end{cases}
\end{equation}
and
\begin{equation}
  \label{eq:cx'}
  \begin{cases}
    \overline{C_x'}= C_x' \\
    C_x'=v_x\inverse T_x+ \sum_{y< x} v_x\inverse P_{y,x}T_y,
  \end{cases}
\end{equation}
where 
\[
\epsilon_x= (-1)^{\ell(x)}\quad\text{and}\quad v_x= v^{\ell(x)}
\]
for $x$ in $\Wex$, and $P_{y,x}$ is a polynomial in $v$ of degree at most
$\ell(x)-\ell(y) -1$. (See also~\cite[\S1.7, 1.8]{lusztig:bases}.) We call
$\{\, C_x\mid x\in \Wex\,\}$ and $\{\, C_x'\mid x\in \Wex\,\}$
\emph{Kazhdan-Lusztig bases} of $\CH$. A fundamental property of the
Kazhdan-Lusztig bases is that if $x$ is in $\Wex$ and $s,t$ are in $S$ with
$\ell(tx)<\ell(x)$ and $\ell(xs)<\ell(x)$, then
\begin{equation}
  \label{eq:cxts}
  T_tC_x= C_xT_s=-C_x\quad\text{and}\quad T_tC_x'= C_x'T_s=v^2 C_x'.
\end{equation}

For a subset $I$ of $S$ let $w_I$ be the longest element in $W_I$. Then
\[
C_{w_I}= (-v)^{\ell(w_I)} \sum_{y\in W_I} \epsilon_y v_y^{-2} T_y \quad
\text{and}\quad T_sC_{w_I}= C_{w_I}T_s=-C_{w_I}
\]
for $s$ in $I$. For subsets $I$ and $J$ of $S$ we have the Kazhdan-Lusztig
basis elements $C_{w_I}$ and $C_{w_J}$ of $\CH$. Define
\[
\CHIJ=C_{w_I} \CH C_{w_J}\quad \text{and}\quad \chi^{IJ}\colon \CH\to
\CHIJ\text{ by } \chi^{IJ}(h)= C_{w_I} h C_{w_J}.
\]
Obviously, $\CHIJ$ is an $R(\Gbar)$-submodule of $\CH$ and $\chi^{IJ}$ is a
surjective $R(\Gbar)$-module homomorphism.

\subsection{The isomorphism \texorpdfstring{$\CHIJ \cong
    K^{\Gbar}(\XIJ)$}{}}  

The group $\Gbar$ acts on the Steinberg variety $Z$ and so $K^{\Gbar}(Z)$ is
naturally an $R(\Gbar)$-module, and hence an $A$-module. Chriss and Ginzburg
\cite[\S7.2]{chrissginzburg:representation} and Lusztig \cite{lusztig:bases}
have shown that $K^{\Gbar}(Z)$ has an $A$-algebra structure such that $\CH
\cong K^{\Gbar}(Z)$. Let $\varphi\colon \CH\to K^{\Gbar}(Z)$ be the
$A$-algebra isomorphism constructed by Lusztig \cite[Theorem
8.6]{lusztig:bases}. The main result in this paper is the following theorem.

\begin{theorem}\label{thm:main}
  For each pair of subsets $(I,J)$ of $S$ there is a unique
  $R(\Gbar)$-module isomorphism
  \[
  \psi^{IJ}\colon \CHIJ \xrightarrow{\ \cong\ } K^{\Gbar}(\XIJ)
  \]
  such that the diagram
  \begin{equation}
    \label{eq:mainthm}
    \vcenter{\vbox{
        \xymatrix{\CH \ar[d]^{\chi^{IJ}} \ar[rr]^-{\varphi}_-{\cong} &&
          K^{\Gbar}(Z) \ar[d]^{\etaIJ_*} \\
          \CHIJ \ar[rr]^-{\psi^{IJ}}_-\cong && K^{\Gbar}(\XIJ) }
      }}
  \end{equation}
  commutes.
\end{theorem}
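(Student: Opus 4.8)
The plan is to prove the theorem by first establishing that the outer rectangle admits a factorization through $\CHIJ$, and then that the induced map is an isomorphism of $R(\Gbar)$-modules. The first step is to understand $\ker(\etaIJ_*\circ\varphi)$. Since $\etaIJ$ is surjective, proper, and $\Gbar$-equivariant, the pushforward $\etaIJ_*$ is surjective, so $\chi^{IJ}$ and $\etaIJ_*\circ\varphi$ are both surjective $R(\Gbar)$-module homomorphisms out of $\CH$. Thus it suffices to show $\ker\chi^{IJ}=\ker(\etaIJ_*\circ\varphi)$; given this equality, the universal property of the quotient furnishes the unique $R(\Gbar)$-module map $\psi^{IJ}$ making the square commute, and it is automatically an isomorphism. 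I would establish the inclusion $\ker\chi^{IJ}\subseteq\ker(\etaIJ_*\circ\varphi)$ first, since this is what makes the definition of $\psi^{IJ}$ possible. By the relations~\eqref{eq:cxts} and the formula $T_sC_{w_I}=C_{w_I}T_s=-C_{w_I}$ for $s\in I$, $s\in J$ respectively, the kernel of $\chi^{IJ}(h)=C_{w_I}hC_{w_J}$ is generated over $\CH$ (on the appropriate sides) by elements of the form $(T_s+1)$ for $s\in I$ acting on the left and $(T_s+1)$ for $s\in J$ acting on the right; equivalently $\ker\chi^{IJ}$ is spanned by $\{(T_s-(-1))h: s\in I\}+\{h(T_s-(-1)):s\in J\}$. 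So I would show that under $\varphi$ these go to classes killed by $\etaIJ_*$, using the geometric description of how $T_s$ acts on $K^{\Gbar}(Z)$ by convolution with the structure sheaf of the relevant codimension-one correspondence, together with the fact that $\etaIJ$ collapses exactly the $P_I/B$ (resp.\ $P_J/B$) fibers. Concretely, one shows $\etaIJ_*\varphi(T_s+1)=0$ as a left (resp.\ right) convolution operator because $\etaIJ$ factors through the partial flag variety $G/P_I$ and the fiber of $G/B\to G/P_I$ over a point fixed by $s$ forces the alternating sum defining $(T_s+1)$-convolution to cancel upon pushforward — this is a localization/projection-formula computation analogous to the one showing $(p_I)_*\CO = $ trivial rank-two-type cancellation.

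For the reverse inclusion $\ker(\etaIJ_*\circ\varphi)\subseteq\ker\chi^{IJ}$, I would use the filtration of $K^{\Gbar}(\XIJ)$ indexed by $G$-orbits on $G/P_I\times G/P_J$ that is announced in the introduction, parametrized by double cosets $W_I\backslash W/W_J$ (or rather the relevant subset $\WIJ$). The associated graded of this filtration has pieces of the form $K^{\Lbar'}(\FN')$ for $L'$ a Levi of a parabolic of $G$ and $\FN'$ the nilpotent cone of $\Lie(L')$, and Ostrik's theorem identifies each such piece with an explicit subquotient of the affine Hecke algebra $\CH_{L'}$. Matching this filtration on the $K$-theory side with a compatible filtration of $\CHIJ$ by $\CH$-bimodule (or $R(\Gbar)$-module) pieces indexed by the same double cosets — which exists because $C_{w_I}\CH C_{w_J}$ decomposes according to the Bernstein/standard basis along $W_I\backslash\Wex/W_J$ — one checks that $\psi^{IJ}$ (already defined by the first step) carries the $d$-th filtration piece of $\CHIJ$ isomorphically onto the $d$-th piece of $K^{\Gbar}(\XIJ)$. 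Since a surjective map of filtered modules that is an isomorphism on associated graded pieces is itself an isomorphism, this simultaneously proves injectivity of $\psi^{IJ}$ — hence $\ker(\etaIJ_*\circ\varphi)=\ker\chi^{IJ}$ — and finishes the proof. The two base cases serve as sanity checks: for $I=J=\emptyset$ both sides are all of $\CH$ and $\psi^{\emptyset\emptyset}=\varphi$; for $I=J=S$ the filtration is trivial and the statement reduces exactly to Ostrik's computation of $K^{\Gbar}(\FN)$.

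The main obstacle will be the second step: constructing the $K$-theoretic filtration of $K^{\Gbar}(\XIJ)$ with the asserted Levi/nilpotent-cone form on the associated graded, and then establishing that $\psi^{IJ}$ is compatible with it. This requires (i) identifying the closed $G$-stable subvarieties $\XIJ_{\leq d}$ obtained by taking the preimage under the projection to $G/P_I\times G/P_J$ of closures of $G$-orbits, (ii) showing each locally closed stratum $\XIJ_d$ is, as a $\Gbar$-variety, an induced space $\Gbar\times^{\Pbar'}(\text{something})$ whose $K$-theory reduces via $\res_F$ to $K^{\Lbar'_{\red}}$ of a nilpotent-cone-type variety, and (iii) — the truly delicate point — verifying that the identifications from Ostrik's theorem on each graded piece glue compatibly with the bimodule structure so that they assemble into the single map $\psi^{IJ}$ determined globally in the first step. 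Steps (i)–(ii) are geometry of the $X^{IJ}$ from~\cite{douglassroehrle:geometry} plus Thomason's descent (already cited), but (iii) needs a careful bookkeeping of how convolution by $T_w$ for $w$ in double-coset representatives interacts with the stratification; I expect this to be where the bulk of the work in the subsequent sections lies.
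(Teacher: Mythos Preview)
Your overall strategy matches the paper's: first establish that $\etaIJ_*\circ\varphi$ factors through $\chi^{IJ}$, then use the filtration by $G$-orbits on $G/P_I\times G/P_J$, together with Ostrik's computation on each graded piece, to show the induced map is an isomorphism. Your identification of the main obstacle---your point (iii), compatibility of the stratum-by-stratum identifications with the globally defined $\psi^{IJ}$---is exactly where the paper concentrates its effort (this becomes \autoref{thm:wideiso2}).

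There is, however, one genuine gap. You assert that $\etaIJ_*$ is surjective because $\etaIJ$ is surjective, proper, and equivariant. This does not follow: properness and surjectivity of a morphism do not in general imply surjectivity of the pushforward on equivariant $K$-groups. In the paper, surjectivity of $\etaIJ_*$ is a nontrivial consequence of the filtration argument itself (\autoref{cor:surj}): one proves $\eta^{\ssleq z}_*$ is surjective by induction on $z$ in $\WIJ$, using that on each open stratum $\eta^z_*$ is identified with $(p_z)_*\colon K^{\Lbar_z}(\SNt_z)\to K^{\Lbar_z}(\SN_z)$, which is surjective by Ostrik's lemma. So surjectivity cannot be assumed at the outset; it only emerges once the stratification work is in place. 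This does not break your plan---the machinery you propose in step two supplies exactly what is needed---but the logical order has to be rearranged.

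A smaller point on the factorization step: to pass from ``$\etaIJ_*\varphi(h(T_s+1))=0$ for all $h$'' to the single-element computation ``$\etaEJ_*\varphi(T_s+1)=0$,'' you need a $K^{\Gbar}(Z)$-module structure on $K^{\Gbar}(\XEJ)$ for which $\etaEJ_*$ is module-linear. Your phrase ``as a convolution operator'' gestures at this, but setting it up is real work since $\XEJ$ is not smooth and the intersection/Tor product must be handled carefully; the paper devotes \autoref{sec:kthy} and \autoref{pro:xlin} to it. The vanishing $\etaEJ_*(\mathbf a_s)=0$ itself (\autoref{thm:as}) is then a cohomology computation on $\BBP^1$-fibres, in line with what you sketch.
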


In~\autoref{sec:kl} we use the isomorphism $\psi^{IJ}$ to define a standard
basis, a ``bar'' involution, and a Kazhdan-Lusztig basis for
$K^{\Gbar}(\XIJ)$.  As we explain next, the proof of~\autoref{thm:main}
gives significantly more information about the isomorphism $\psi^{IJ}$ than
is encoded in~\eqref{eq:mainthm}.  This leads to a graded refinement of the
theorem that is given in~\autoref{cor:ref}.

The preimages of the $G$-orbits on $G/P_I \times G/P_J$ under the projection
on the second and third factors, say $\{\XIJ_z\}$, form a partition of
$\XIJ$ into locally closed, equidimensional subvarieties indexed by
$\{W_IzW_J\}$, the set of $(W_I, W_J)$-double cosets in $W$. The closures of
these subvarieties are the irreducible components of $\XIJ$ (see
\cite[\S3]{douglassroehrle:geometry}). Thus, the fundamental classes of the
closures of the subvarieties $\XIJ_z$ form a basis of the top Borel-Moore
homology of $\XIJ$. As explained in detail below, the contribution of each
subvariety $\XIJ_z$ to $K^{\Gbar}(\XIJ)$ is not just a single homology
class, rather it is the full equivariant $K$-group of isomorphism classes of
$\LIbar\cap {}^z \LJbar$-equivariant coherent sheaves on the nilpotent cone
of $\fl_I\cap z\fl_J$. A basis of this $K$-group is indexed by the set of
$(W_I,W_J)$-double cosets in $W_I\backslash \Wex/W_J$ that project to the
double coset $W_IzW_J$ in $W$. Taking the union over $z$ (in a suitable
sense) gives rise to a basis of $K^{\Gbar}(\XIJ)$ indexed by the
$(W_I,W_J)$-double cosets in $\Wex$.

In case $I=J=\emptyset$, $X^{\emptyset \emptyset}=Z$, and $\{W_\emptyset z
W_\emptyset\}=W$. For $w$ in $W$, $X^{\emptyset \emptyset}_w=Z_w$ is the
conormal bundle to the $G$-orbit in $G/B\times G/B$ corresponding to $w$,
$L_\emptyset \cap {}^w L_\emptyset =T$, the cone of nilpotent elements in
$\fl_\emptyset \cap w\fl_\emptyset$ is $\{0\}$, and $K^{\Tbar}(\{0\}) \cong
A[X(T)]$. Obviously, $\{\, t_\lambda w\mid \lambda\in X(T)\,\}$ parametrizes
the set of $(W_\emptyset, W_\emptyset)$-double cosets in $\Wex$ that project
to $\{w\}$. It follows from~\autoref{thm:ost} that $\{\, t_\lambda w
\mid \lambda\in X(T)\,\}$ parametrizes an $A$-basis of $K^{\Gbar}(Z_w)$ and
it follows from ~\cite[\S8.6]{lusztig:bases} that $\{\, t_\lambda w \mid
w\in W,\ \lambda\in X(T)\,\}$ parametrizes an $A$-basis of $K^{\Gbar}(Z)$.

At the other extreme, when $I=J=S$, there is a single $(W,W)$-double coset
in $W$ (with representative $1$) and $\{W_S z W_S\}=\{W\}$. In this case,
$X^{SS}_1=X^{SS}\cong \FN$, $L_S \cap {}^1 L_S =G$, and the cone of
nilpotent elements in $\fl_S \cap {}^1\fl_S$ is $\FN$. The
$(W_S,W_S)$-double cosets in $\Wex$ that project to $W$ are simply the
$(W,W)$-double cosets in $\Wex$. These are parametrized by $\{\,
t_\lambda\mid \lambda\in X(T)^+\,\}$, where $X(T)^+$ is the set of dominant
weights relative to the choice of $B$.  For $\lambda$ in $X(T)$ let
$\CL_\lambda$ be the $G$-equivariant line bundle on $G/B$ such that $T$ acts
on the fibre over $B$ with character $-\lambda$. We consider $\CL_\lambda$
as a $\Gbar$-equivariant line bundle on $G/B$ via the natural projection
$\Gbar \to G$. It follows from a result of Broer~\cite{broer:line} that if
$\lambda$ is dominant, then $p_* q^* ([\CL_\lambda]) = [R^0p_*q^*
\CL_\lambda]$ in $K^{\Gbar}(\FN)$.  Ostrik~\cite[\S2.2] {ostrik:equivariant}
has proved the following key lemma he attributes to R.~Bezrukavnikov.

\begin{lemma}\label{lem:ostlem}
  The set $\{\, p_*q^* ([\CL_\lambda]) \mid \lambda\in X(T)^+ \,\}$ is an
  $A$-basis of $K^{\Gbar}(\FN)$.
\end{lemma}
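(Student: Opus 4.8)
The plan is to reduce the statement to Ostrik's case by first computing $K^{\Gbar}(\FNt)$ and then analysing the pushforward $p_*$. The fibre of $q\colon\FNt\to G/B$ over $gB$ is the linear space $g\fn$, so $q$ is a $\Gbar$-equivariant vector bundle (indeed $\FNt\cong G\times^B\fn$), and hence $q^*\colon K^{\Gbar}(G/B)\xrightarrow{\ \cong\ }K^{\Gbar}(\FNt)$. Since $G/B\cong\Gbar/\Bbar$ we have $K^{\Gbar}(G/B)\cong R(\Bbar)\cong R(\Tbar)\cong A[X(T)]$, with $[\CL_\lambda]\mapsto e^{-\lambda}$; so $\{\,q^*[\CL_\lambda]\mid\lambda\in X(T)\,\}$ is an $A$-basis of $K^{\Gbar}(\FNt)$. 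It is therefore enough to prove: (i) $p_*\colon K^{\Gbar}(\FNt)\to K^{\Gbar}(\FN)$ is surjective; (ii) the classes $p_*q^*[\CL_\lambda]$ for non-dominant $\lambda$ lie in the $A$-span of those for dominant $\lambda$; (iii) $\{\,p_*q^*[\CL_\lambda]\mid\lambda\in X(T)^+\,\}$ is $A$-linearly independent.

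For (i) one uses that $\FN$ is a normal, Cohen--Macaulay variety with rational singularities, so $Rp_*\CO_{\FNt}=\CO_{\FN}$; by the projection formula every perfect complex on $\FN$ is a pushforward (in particular $[\CO_{\FN}]=p_*[\CO_{\FNt}]$), and an arbitrary $\Gbar$-equivariant coherent sheaf differs in $K$-theory from a combination of shifted free $\BBC[\FN]$-modules only by a maximal Cohen--Macaulay syzygy, which is again a pushforward from $\FNt$ by the tilting-bundle description of $D^b(\FNt)$; hence $p_*$ is onto and $\{\,p_*q^*[\CL_\lambda]\mid\lambda\in X(T)\,\}$ spans $K^{\Gbar}(\FN)$. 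For (ii), fix a simple root $\alpha$ and factor $p$ through the partial Springer resolution $\FNt\xrightarrow{\rho}\FNt'\xrightarrow{p'}\FN$ attached to $P_{\{s_\alpha\}}$. The exceptional divisor $E=G\times^B\fu_{\{s_\alpha\}}\subset\FNt$ satisfies $\CO_{\FNt}(-E)\cong q^*\CL_\alpha$ (because $\fn/\fu_{\{s_\alpha\}}\cong\BBC_\alpha$ as a $B$-module), and $\rho$ restricts to a genuine $\BBP^1$-bundle on $E$; tensoring $0\to\CO_{\FNt}(-E)\to\CO_{\FNt}\to\CO_E\to0$ with $q^*\CL_\lambda$ gives $0\to q^*\CL_{\lambda+\alpha}\to q^*\CL_\lambda\to q^*\CL_\lambda|_E\to0$, so $p_*q^*[\CL_\lambda]=p_*q^*[\CL_{\lambda+\alpha}]+(p|_E)_*[q^*\CL_\lambda|_E]$, where relative Bott--Borel--Weil on $E$ shows the last term is supported on the Richardson orbit closure $\overline{\CO}_{\{s_\alpha\}}$ and vanishes when $\langle\lambda,\alpha^\vee\rangle=-1$. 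Iterating, $p_*q^*[\CL_\lambda]$ is congruent, modulo classes supported on $\overline{\CO}_{\{s_\alpha\}}$, to $p_*q^*[\CL_{\lambda'}]$ with $\langle\lambda',\alpha^\vee\rangle\ge0$; running this for all simple roots and inducting downward on the nilpotent-orbit stratification of $\FN$ reduces the spanning set to $\{\,p_*q^*[\CL_\lambda]\mid\lambda\in X(T)^+\,\}$. For these, Broer's theorem (quoted above) says $p_*q^*[\CL_\lambda]=[R^0p_*q^*\CL_\lambda]$.

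For (iii) I use that $\FN$ is affine, so $K^{\Gbar}(\FN)$ is the Grothendieck group of finitely generated $\BBZ_{\ge0}$-graded $\BBC[\FN]$-modules with compatible $G$-action, and the equivariant Hilbert series $\operatorname{ch}_v([M])=\sum_{n\ge0}v^{2n}[M_n]$, valued in a completion of $R(\Gbar)$, is an additive $A$-linear map. By Broer's theorem, for $\lambda\in X(T)^+$ the module of global sections of $R^0p_*q^*\CL_\lambda$ equals $\bigoplus_{n\ge0}v^{2n}\,H^0\!\bigl(G/B,\CL_\lambda\otimes\operatorname{Sym}^n(G\times^B\fn^*)\bigr)$, whose term of least $v$-degree is $v^0\,H^0(G/B,\CL_\lambda)=v^0\,V_\lambda$ by Borel--Weil, where $V_\lambda$ is the irreducible $G$-module of highest weight $\lambda$. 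Given a finite $A$-linear relation $\sum_\lambda c_\lambda\,p_*q^*[\CL_\lambda]=0$ among dominant weights, apply $\operatorname{ch}_v$ and extract the coefficient of the least power of $v$ occurring: it is a nonzero $\BBZ$-linear combination of distinct irreducible characters $[V_\lambda]$ equal to $0$, which is impossible. Hence (iii) holds, and together with (i) and (ii) this shows $\{\,p_*q^*[\CL_\lambda]\mid\lambda\in X(T)^+\,\}$ is an $A$-basis of $K^{\Gbar}(\FN)$.

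The main obstacle is the ``spanning'' half, (i) and (ii): surjectivity of $p_*$ genuinely rests on the rational, Cohen--Macaulay nature of the singularities of $\FN$, and the reduction to dominant weights is delicate precisely because the partial Springer maps are not projective bundles, so the vanishing calculations must be tracked against the orbit stratification of $\FN$; Broer's vanishing theorem is exactly the input that makes the surviving dominant classes transparent. (Were the convolution action of $\CH$ on $K^{\Gbar}(\FN)$ at our disposal, (ii) would read: the element $C_{w_S}$ annihilates the non-dominant part of $K^{\Gbar}(\FNt)$; but that is essentially the content of \autoref{thm:main}, so a self-contained proof of the lemma is better served by the geometric route sketched above.)
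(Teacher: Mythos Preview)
The paper does not give its own proof of this lemma: it quotes the result from Ostrik \cite{ostrik:equivariant} (who attributes it to Bezrukavnikov), remarking only that the extension from simple to reductive $G$ is routine. So there is no argument in the paper to compare yours against directly.

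Evaluating your sketch on its own merits: part (iii) is correct---the graded-character argument together with Borel--Weil is the standard way to see $A$-linear independence of the dominant classes. Part (ii) points in the right direction (the divisor computation $\CO_{\FNt}(-E)\cong q^*\CL_\alpha$ is fine, and the reflection recursion does push weights toward the dominant chamber), but the phrase ``inducting downward on the nilpotent-orbit stratification'' conceals exactly the difficulty you have not resolved: to run that induction you must already know that each graded piece $K^{\Gbar}(\CO)$ is hit by pushforwards from $\FNt$, which is the content of (i).

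Part (i) is where the argument breaks. Rational singularities and the projection formula give $p_*p^*[\CF]=[\CF]$ only for $[\CF]$ in the image of the perfect complexes; since $\FN$ is singular, coherent-sheaf $K$-theory $K^{\Gbar}(\FN)$ is not a priori generated by such classes, and your appeal to a ``tilting-bundle description of $D^b(\FNt)$'' invokes precisely the Bezrukavnikov machinery for which this lemma is an input---so that step is circular. Ostrik's actual route avoids this by using the convolution action of $K^{\Gbar}(Z)\cong\CH$ on $K^{\Gbar}(\FN)$, which is available from Lusztig's isomorphism $\varphi$ \emph{prior} to \autoref{thm:main}; your worry about circularity there is misplaced, since only the $\CH$-module structure (not the identification of $K^{\Gbar}(\FN)$ with $\CH^{SS}$) is needed, and the spanning then becomes a Hecke-algebra computation rather than a geometric one.
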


In~\cite{ostrik:equivariant} Ostrik assumes that the group $G$ is
simple. The extension to reductive groups is straightforward. It follows
from the lemma that $\{\,t_\lambda \mid \lambda\in X(T)^+\,\}$ parametrizes
an $A$-basis of $K^{\Gbar} (X^{SS}_1)= K^{\Gbar}(\FN)$.

For arbitrary $I,J\subseteq S$, the results we prove below are an amalgam of
the two extreme cases. Let $\WIJ$ denote the set of minimal length
$(W_I,W_J)$-double coset representatives in $W$. For $z$ in $\WIJ$, set
\[
L_z=L_I\cap {}^zL_J
\]
and let $X(T)^+_{z}$ be the set of weights in $X(T)$ that are dominant for
$L_z$. Then $L_z$ is a reductive group (see~\cite[\S69B]
{curtisreiner:methodsII}). We show in~\autoref{thm:ost} that there is an
isomorphism $K^{\Gbar}(\XIJ_z)\cong K^{\Lbar_z}(\SN_z)$, where $\SN_z$ is
the nilpotent cone in $\fl_z$ and we show in~\autoref{lem:double} that $\{\,
t_\lambda z \mid \lambda\in X(T)^+_{z}\,\}$ parametrizes the set of $(W_I,
W_J)$-double cosets in $\Wex$ that project to $W_IzW_J$. It follows
from~\autoref{thm:wideiso2} that $\{\, t_\lambda z \mid \lambda\in
gX(T)^+_{z}\,\}$ parametrizes an $A$-basis of $K^{\Gbar}(\XIJ_z)$ and it
follows from~\autoref{cor:linind} that $\{\, t_\lambda z \mid z\in \WIJ,\
\lambda\in X(T)^+_{z} \,\}$ parametrizes an $A$-basis of $K^{\Gbar}(\XIJ)$.

In summary, there is a filtration of $K^{\Gbar}(\XIJ)$ such that the direct
summands of the associated graded $A$-module are naturally indexed by
$(W_I,W_J)$-double cosets in $W$. In addition, the summand indexed by $z$ in
$\WIJ$ is isomorphic to $K^{\Lbar_z}(\SN_z)$ and has a basis consisting of
isomorphism classes of equivariant coherent sheaves canonically indexed by
the $(W_I,W_J)$-double cosets in $\Wex$ that project to $W_IzW_J$.

\section{The proof of \autoref{thm:main}}\label{sec:proof}

In this section we prove~\autoref{thm:main}. The proof proceeds in three
steps. The first step is show that $K^{\Gbar}(\XIJ)$ is a free $A$-module,
the second step is to show that the composition $\etaIJ_* \varphi$ factors
through $\chi^{IJ}$, and the third step is to show that the resulting map
$\psi^{IJ}$ is an isomorphism. In the course of the argument we construct
explicit $A$-bases of of $\CHIJ$ and $K^{\Gbar}(\XIJ)$ that correspond under
$\psi^{IJ}$.

To show that $K^{\Gbar}(\XIJ)$ is a free $A$-module, we use the filtration
on $K^{\Gbar}(\XIJ)$ determined by the $G$-orbits on $G/P_I \times G/P_J$.
Recall that the rule $z\mapsto G\cdot (P_I,zP_J)$ defines a bijection
between $\WIJ$ and the set of $G$-orbits in $G/P_I \times G/P_J$.  Let
$q_{IJ}\colon \XIJ\to G/P_I \times G/P_J$ be the projection on the second
and third factors, and for $z$ in $\WIJ$ define $\XIJ_z$ to be the preimage
in $\XIJ$ of the orbit $G\cdot (P_I, zP_J)$. Then
\[
\XIJ_z=\{\, (x, gP_I, gzP_J)\in \FN\times G/P_I\times G/P_J \mid g\inverse
x\in \FN\cap \fp_I\cap z\fp_J\,\}.
\]
Choose a linear order on $\WIJ$, say $\WIJ=\{\, z_i\mid 1\leq i\leq
|\WIJ|\,\}$, that extends the Bruhat order and define
\[
\XIJ_{\ssleq i}=\coprod_{j\leq i} \XIJ_{z_j}.
\] 
Then $\XIJ_{\ssleq i}= \XIJ_{\ssleq i-1} \amalg \XIJ_{z_i}$, where
$\XIJ_{\ssleq i-1}$ is closed in $\XIJ_{\ssleq i}$ and $\XIJ_{{z_i}}$ is
open in $\XIJ_{\ssleq i}$. It is shown in \autoref{ssec:ex} that for
$i\geq1$ the sequence
\begin{equation}
  \label{eq:exz}
  \xymatrix{0 \ar[r] & K^{\Gbar}(\XIJ_{\ssleq i-1}) \ar[r]^-{()_*} &
    K^{\Gbar}(\XIJ_{\ssleq i}) \ar[r]^-{()^*} & K^{\Gbar}(\XIJ_{z_i}) \ar[r]&
    0 } 
\end{equation}
of $R(\Gbar)$-modules is exact. It follows that the embedding $\XIJ_{\ssleq
  i}\hookrightarrow \XIJ$ induces an injection $K^{\Gbar}(\XIJ_{\ssleq i})
\hookrightarrow K^{\Gbar}(\XIJ)$ in equivariant $K$-theory. 

Suppose $z$ is in $\WIJ$ and recall that $L_z=L_I\cap {}^zL_J$ and that
$\SN_z=\FN\cap \fl_z$ is the cone of nilpotent elements in $\fl_z$. It is
shown in \autoref{ssec:xijz}, in the course of the proof
of~\autoref{thm:ost}, that $K^{\Gbar}(\XIJ_z) \cong
K^{\Lbar_z}(\SN_z)$. Thus, the next proposition follows from the exact
sequence~\eqref{eq:exz} by induction and~\autoref{lem:ostlem}.

\begin{proposition}
  The equivariant $K$-group $K^{\Gbar}(\XIJ)$ is a free $A$-module.
\end{proposition}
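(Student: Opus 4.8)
The plan is to run a dévissage along the filtration of $\XIJ$ by the subvarieties $\XIJ_{\ssleq i}$, reducing the freeness of $K^{\Gbar}(\XIJ)$ over $A$ to the freeness of the individual graded pieces $K^{\Gbar}(\XIJ_{z})$, $z\in\WIJ$. Since $\XIJ_{\ssleq|\WIJ|}=\XIJ$, it suffices to prove by induction on $i$ that each $K^{\Gbar}(\XIJ_{\ssleq i})$ is a free $A$-module, using the short exact sequences~\eqref{eq:exz} as the inductive engine.

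The first ingredient is that each graded piece is free. For $z$ in $\WIJ$, the isomorphism $K^{\Gbar}(\XIJ_z)\cong K^{\Lbar_z}(\SN_z)$ obtained in the course of the proof of~\autoref{thm:ost} identifies $K^{\Gbar}(\XIJ_z)$ with the $\Lbar_z$-equivariant $K$-group of the nilpotent cone $\SN_z$ of the reductive group $L_z=L_I\cap{}^zL_J$. Because $T\subseteq L_z$ and $L_z$ is the Levi subgroup containing $T$ of a parabolic subgroup of $L_I$ — hence inherits the hypothesis of having a simply connected derived group, $L_I$ itself being a Levi factor of a parabolic of $G$ — the reductive-group version of~\autoref{lem:ostlem} applies to $L_z$ and shows that $\{\,p_*q^*([\CL_\lambda])\mid\lambda\in X(T)^+_z\,\}$, formed inside $L_z$, is an $A$-basis of $K^{\Lbar_z}(\SN_z)$. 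In particular $K^{\Gbar}(\XIJ_z)$ is a free $A$-module for every $z$.

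Now I would induct on $i$. The case $i=1$ is immediate since $\XIJ_{\ssleq1}=\XIJ_{z_1}$, which is free by the previous paragraph. For the inductive step, the exact sequence~\eqref{eq:exz}
\[
0 \to K^{\Gbar}(\XIJ_{\ssleq i-1}) \xrightarrow{()_*} K^{\Gbar}(\XIJ_{\ssleq i}) \xrightarrow{()^*} K^{\Gbar}(\XIJ_{z_i}) \to 0
\]
has free left term by the induction hypothesis and free — hence projective — right term; therefore it splits, and $K^{\Gbar}(\XIJ_{\ssleq i})\cong K^{\Gbar}(\XIJ_{\ssleq i-1})\oplus K^{\Gbar}(\XIJ_{z_i})$ is free. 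Taking $i=|\WIJ|$ yields the proposition.

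I do not expect a real obstacle in this argument itself: it is a routine filtration computation once its two external inputs are in hand, namely the exactness of~\eqref{eq:exz} (proved in~\autoref{ssec:ex}) and the isomorphism $K^{\Gbar}(\XIJ_z)\cong K^{\Lbar_z}(\SN_z)$ together with the reductive extension of~\autoref{lem:ostlem} (proved in~\autoref{ssec:xijz} via~\autoref{thm:ost}). The only point demanding care is bookkeeping: one must fix the linear order on $\WIJ$ to refine the Bruhat order so that $\XIJ_{\ssleq i-1}$ is genuinely closed in $\XIJ_{\ssleq i}$ and $\XIJ_{z_i}$ genuinely open, which is precisely what makes the push-forward and restriction maps in~\eqref{eq:exz} available and the sequence exact. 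The substantive difficulty of the whole circle of results — establishing~\eqref{eq:exz} and $K^{\Gbar}(\XIJ_z)\cong K^{\Lbar_z}(\SN_z)$ — is deferred to the later sections.
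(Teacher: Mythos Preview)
Your argument is correct and is essentially the same as the paper's: the paper states that the proposition follows from the exact sequence~\eqref{eq:exz} by induction and~\autoref{lem:ostlem}, using the isomorphism $K^{\Gbar}(\XIJ_z)\cong K^{\Lbar_z}(\SN_z)$ established in the proof of~\autoref{thm:ost}. You have simply written out the induction explicitly, including the splitting via projectivity of the right-hand term, which is exactly what the paper's one-line justification unpacks to.
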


Next, to show that $\etaIJ_* \varphi$ factors through $\chi^{IJ}$, it is
enough to show that for each generator $s$ in $J$, and each generator $t$ in
$I$,
\begin{equation}
  \label{eq:4b}
  \etaIJ_*  \varphi (hT_s)= - \etaIJ_* \varphi(h)= \etaIJ_*  \varphi (T_th)  
\end{equation}
for all $h$ in $\CH$. Indeed, if this condition holds, then for all
$h$ in $\CH$, $x$ in $W_I$, and $y$ in $W_J$, $\etaIJ_* \varphi(T_x h
T_y)= \epsilon_x \epsilon_y \etaIJ_*\varphi(h)$, and so
$\etaIJ_*\varphi(C_{w_I} h C_{w_J}) = r_{IJ} \etaIJ_*\varphi(h)$,
where $r_{IJ}$ is a non-zero element of $A$ that depends only on $I$
and $J$. Then $\psi^{IJ}\colon \CHIJ\to K^{\Gbar}(\XIJ)$ by
$\psi^{IJ}(C_{w_I} h C_{w_J}) = \etaIJ_*\varphi(h)$ is well defined
because if $C_{w_I} h C_{w_J}= 0$, then $r_{IJ} \etaIJ_*\varphi(h)=0$
and so $\etaIJ_*\varphi(h)=0$.

Let $\pi_I\colon \XEJ \to \XIJ$ by $\pi_I(x, gB,
hP_J)= (x, gP_I, hP_J)$ and let $\pi_J\colon \XIE \to \XIJ$ by $\pi_J(x,
gP_I, hB)= (x, gP_I, hP_J)$. Then the diagram
\begin{equation*}
  \label{eq:3b}
  \xymatrix{Z\ar[r]^{\eta^{\emptyset J}} \ar[d]_{\eta^{I \emptyset}}
        \ar[dr]^{\etaIJ} &  \XEJ \ar[d]^{\pi_I} \\ 
    \XIE \ar[r]_{\pi_J} &\XIJ}
\end{equation*}
commutes and so~\eqref{eq:4b} follows from the equalities
\[
\eta^{\emptyset J}_* \varphi (hT_s)= - \eta^{\emptyset J}_*
\varphi(h)\quad\text{and}\quad \eta^{I \emptyset}_* \varphi (T_th)= -
\eta^{I \emptyset}_* \varphi(h),
\]
for $s\in J$ and $t\in I$, by applying $(\pi_I)_*$ and $(\pi_J)_*$,
respectively. Thus, by symmetry, it is enough to show that $\eta^{\emptyset
  J}_* \varphi$ factors through the projection $\CH\to \CH C_{w_J}$ given by
right multiplication by $C_{w_J}$:
\[
\xymatrix{ \CH \ar@{-->}[d] \ar[r]^{\varphi} & K^{\Gbar}(Z)
  \ar[d]^-{\eta^{\emptyset J}_*} \\
  \CH C_{w_J} \ar@{-->}[r] & K^{\Gbar}(\XEJ).} 
\]

The intersection/Tor-product construction described by Lusztig in
\cite[\S6.4]{lusztig:bases} can be used to define a
$K^{\Gbar}(Z)$-module structure on $K^{\Gbar}(\XEJ)$, say
\[
\star_J\colon K^{\Gbar}(Z) \times K^{\Gbar}(\XEJ) \to K^{\Gbar}(\XEJ),
\]
such that the map $\eta^{\emptyset J}_*\colon K^{\Gbar}(Z) \to
K^{\Gbar}(\XEJ)$ is $K^{\Gbar}(Z)$-linear (see~\autoref{pro:xlin}). Thus,
for all $h$ in $\CH$ and all $s$ in $S$,
\[
\eta^{\emptyset J}_* \varphi(hT_s)= \eta^{\emptyset J}_*(\varphi(h) \star
\varphi(T_s)) = \varphi(h) \star_J \eta^{\emptyset J}_* (\varphi(T_s)),
\]
where $\star$ is the convolution product in $K^{\Gbar}(Z)$. Hence, it is
enough to show that $\eta^{\emptyset J}_* (\varphi(T_s))= -\eta^{\emptyset
  J}_*( \varphi(1))$ for all $s$ in $J$, or equivalently that
$\eta^{\emptyset J}_*(\varphi(T_s+1))=0$. For a simple reflection $s$ in
$W$, let $\mathbf a_s$ in $K^{\Gbar}(Z)$ be defined as in
\cite[\S7.20]{lusztig:bases} (see \autoref{sec:as}). By~\cite[\S7.25]
{lusztig:bases}, $\varphi(T_s+1)=-v \mathbf a_s$. Therefore, the existence
of the map $\psi^{IJ}$ in~\autoref{thm:main} follows from the next theorem.

\begin{theorem}\label{thm:as}
  Suppose $s$ is in $J$. Then $\etaEJ_*(\mathbf a_s)= 0$ in
  $K^{\Gbar}(\XEJ)$.
\end{theorem}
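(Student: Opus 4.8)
The plan is to reduce the statement to a computation on the partial flag variety $G/P_J$ by factoring $\eta^{\emptyset J}$ through an intermediate fibration and using the projection formula. Recall that the element $\mathbf a_s$ is the class, in $K^{\Gbar}(Z)$, supported on $\Zsbar = \overline{Z_s}$, the closure of the conormal bundle to the $G$-orbit $O_s \subseteq G/B \times G/B$ indexed by the simple reflection $s$; concretely $\mathbf a_s$ is built from the structure sheaf of $\Ztilde_s$, where $\Ztilde_s = \FNt \times_{\FN \cap \fp_s} \FNt$ sits over $O_s$. First I would observe that $\eta^{\emptyset J}$ restricted to $\Zsbar$ factors as $\Zsbar \to \Zsbar' \to \XEJ$, where the first map collapses the $G/B$-factor on the right along the fibres of $G/B \to G/P_J$. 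Since $s \in J$, the $P_J$-orbit structure on $G/B$ forces the relevant fibre of this collapsing map to be a $\BBP^1$: the point is that the image of $O_s$ under $\id \times (G/B \to G/P_J)$ collapses a one-parameter family, and $\eta^{\emptyset J}$ on the $s$-piece therefore contracts a $\BBP^1$-bundle.

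The key step is then the following geometric observation: because $s \in J$, the restriction of $\eta^{\emptyset J}$ to the (closed) support of $\mathbf a_s$ is a $\BBP^1$-fibration onto its image (or more precisely has generic $\BBP^1$-fibres with the higher direct images of the relevant sheaf controlled), and a $\BBP^1$-bundle pushforward of the specific $K$-class entering the definition of $\mathbf a_s$ vanishes. Here I would use the standard fact that for a $\BBP^1$-bundle $\rho\colon E \to Y$ and a line bundle $\CL$ that restricts to $\CO(-1)$ on the fibres, $R\rho_* \CL = 0$; the class $\mathbf a_s$ is precisely engineered (see \cite[\S7.20]{lusztig:bases}) so that its components along $\Zsbar$, after the partial collapse coming from $B \subseteq P_s \cap P_J$, are built from such $\CO(-1)$-type twists. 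Concretely, $\mathbf a_s = [\CO_{\Ztilde_s}] - [\CL]$ for a suitable line bundle, and under $\eta^{\emptyset J}_*$ both terms have the same image (or their difference maps to a vanishing $R^\bullet$), so $\eta^{\emptyset J}_*(\mathbf a_s) = 0$. Equivalently, one can phrase this via the $R(\Gbar)$-module isomorphism $\res$: restricting to the open piece $\XEJ_z$ for the relevant double coset and pushing forward to $K^{\LIbar\cap{}^z\LJbar}$ of a nilpotent cone, the image of $\mathbf a_s$ is the difference of two classes that coincide because $s\in J$ makes the corresponding minimal parabolic of the Levi $L_J$ act, killing the $(T_s+1)$-type combination.

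I would carry this out in the following order: (1) identify the support of $\mathbf a_s$ and recall its explicit description from \cite[\S7.20]{lusztig:bases} as a difference of two structure-sheaf-type classes on $\Ztilde_s$; (2) describe $\eta^{\emptyset J}|_{\Zsbar}$ explicitly, exhibiting it (over a dense open set) as a $\BBP^1$-fibration using $s\in J$, i.e.\ the fact that $\langle s\rangle\subseteq W_J$ so the $s$-conormal direction is collapsed in $G/P_J$; (3) apply the projection formula and the $\BBP^1$-bundle vanishing $R\rho_*\CO(-1)=0$ to conclude that the two terms of $\mathbf a_s$ have equal image, hence $\eta^{\emptyset J}_*(\mathbf a_s)=0$; (4) handle the boundary components of $\Zsbar$ by a dimension count, checking that they contribute torsion-free classes that also cancel, or alternatively reduce everything to the open cell and invoke the exactness of~\eqref{eq:exz} to propagate the vanishing. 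The main obstacle I anticipate is step (3): making the $\BBP^1$-fibration statement precise over all of $\Zsbar$ rather than just generically, since $\Zsbar$ is singular and the map $\eta^{\emptyset J}$ need not be flat. I expect to circumvent this by working with the smooth resolution $\Ztilde_s$ (which genuinely is a $\BBP^1$-bundle over $\FNt$ twisted appropriately) where the sheaf-level computation $R\rho_*\CO(-1)=0$ is clean, and then pushing the resulting identity forward to $K^{\Gbar}(\XEJ)$, using that $[\CO_{\Zsbar}]$ and the structure sheaves of its components are expressible through $[\CO_{\Ztilde_s}]$ under the proper birational map $\Ztilde_s\to\Zsbar$.
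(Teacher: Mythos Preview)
Your core strategy matches the paper's: factor $\eta^{\emptyset J}|_{\Zsbar}$ through a $\BBP^1$-bundle and kill the class using the vanishing $R\rho_*\CO_{\BBP^1}(-1)=0$ fibrewise (via Grauert's theorem). Two concrete inaccuracies should be fixed, though.

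First, your description of $\mathbf a_s$ as a difference $[\CO_{\Ztilde_s}]-[\CL]$ is not the one in play here. In Lusztig's conventions (\cite[\S7.20]{lusztig:bases}), $\mathbf a_s$ is the \emph{single} class $v^{-1}(k_s)_* q_s^* i_s^*\bigl([\CL_{\lambda'}\boxtimes\CL_{\lambda''}]\bigr)$ on $\Zsbar$, where $\lambda',\lambda''$ satisfy $\langle\lambda',\check\alpha\rangle=\langle\lambda'',\check\alpha\rangle=-1$ and $\lambda'+\lambda''=-\alpha$. The vanishing comes from this one line bundle restricting to $\CO_{\BBP^1}(-1)$ on each fibre of the $\BBP^1$-bundle, not from two terms cancelling. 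Your hedge ``both terms have the same image (or their difference maps to a vanishing $R^\bullet$)'' should be replaced by the second alternative applied to a single sheaf.

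Second, your worry about singularities of $\Zsbar$ and the proposed pass to a resolution $\Ztilde_s$ is unnecessary: $\Zsbar$ is already smooth, being a vector bundle (with fibre $\fu_s$) over $\Osbar\cong G\times^{B}P_s/B$. Moreover the map $\eta'\colon\Zsbar\to Y_s:=\eta^{\emptyset\{s\}}(\Zsbar)$ is a Zariski-locally-trivial $\BBP^1$-bundle over all of $Y_s$, not merely generically, so the fibrewise computation is clean everywhere and your step~(4) about boundary components is superfluous. With these corrections your outline becomes exactly the paper's proof.
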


This theorem is proved in \autoref{sec:as}.

Finally, to show that $\psi^{IJ}$ is an isomorphism, we use an $A$-basis of
$\CHIJ$ and the partition $\XIJ= \coprod_{z\in \WIJ} \XIJ_z$ to define
compatible filtrations on $\CHIJ$ and $K^{\Gbar}(\XIJ)$. We also need the
analogous constructions for $\CH$ and $K^{\Gbar}(Z)$. 

Recall that $q_{Z}\colon Z\to G/B \times G/B$ is the projection on the
second and third factors. For $w$ in $W$ define $Z_w$ to be the preimage in
$Z$ of the orbit $G\cdot (B, wB)$. Then
\[
Z_w= \{\, (x, gB, gwB)\in \FN\times G/B\times G/B \mid g\inverse x\in
\fu\cap w\fu\,\}.
\]
Define 
\[
Z_{\ssleq w}=\coprod_{y\leq w} Z_y \quad \text{and} \quad Z_{\sslt w}=
Z_{\ssleq w}\setminus Z_w= \coprod_{y< w} Z_y,
\]
where $\leq$ is the Bruhat order on $W$. Similarly, for $z$ in $\WIJ$ define
\[
\XIJ_{\ssleq z} =\coprod_{y\leq z} \XIJ_y \quad \text{and} \quad \XIJ_{\sslt
  z}= \XIJ_{\ssleq z}\setminus \XIJ_z =\coprod_{y< z} \XIJ_y,
\] 
where the unions are over $y$ in $\WIJ$.

Suppose $z$ is in $\WIJ$ and let $\eta^z$, $\eta^{\ssleq z}$, and
$\eta^{\sslt z}$ be the restrictions of $\etaIJ$ to $Z_z$, $Z_{\ssleq z}$,
and $Z_{\sslt z}$, respectively.  It is shown in
\cite[\S3]{douglassroehrle:geometry} that if $w_1$ is in $W_I$ and $w_2$ is
in $W_J$, then $\etaIJ(Z_{w_1zw_2})\subseteq \XIJ_z$, and that
$\etaIJ(Z_{w_1zw_2}) =\XIJ_z$ if and only if $w_1zw_2=z$. It is shown in
~\cite[Lemma 2.2] {douglass:inversion} that if $z$ and $z'$ are in $\WIJ$,
then $z\leq z'$ if and only if there are elements $w$ in $W_IzW_J$ and $w'$
in $W_Iz'W_J$ such that $w\leq w'$. Therefore, $\etaIJ(Z_{\ssleq
  z})\subseteq \XIJ_{\ssleq z}$, and letting $r_z$ and $r_z^{IJ}$ denote the
inclusions $Z_z\to Z_{\ssleq z}$ and $\XIJ_z\to \XIJ_{\ssleq z}$,
respectively, the square
\begin{equation}
  \label{eq:cart}
  \vcenter{\vbox{
      \xymatrix{Z_z\ar[r]^{r_z} \ar[d]^{\eta^z} & Z_{\ssleq
          z}\ar[d]^{\eta^{\ssleq z}} \\
        \XIJ_z\ar[r]^{r_z^{IJ}} & \XIJ_{\ssleq z}}
    }}  
\end{equation}
is cartesian. 

\begin{lemma}\label{lem:hyp1}
  The map $\eta^z\colon Z_z\to \XIJ_z$ is a proper morphism.
\end{lemma}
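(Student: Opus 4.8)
The plan is to show that $\eta^z$ is the composition of a base change of $\etaIJ$ with a closed immersion, and then invoke properness of $\etaIJ$ together with stability of proper morphisms under base change and composition. First I would recall that $\etaIJ\colon Z\to\XIJ$ is proper (it is surjective, proper, and $\Gbar$-equivariant by the discussion following the definition of $\XIJ$), and that $Z_z=(q_Z)^{-1}\bigl(G\cdot(B,zB)\bigr)$ while $\XIJ_z=(q_{IJ})^{-1}\bigl(G\cdot(P_I,zP_J)\bigr)$. The key observation is that the square~\eqref{eq:cart} is cartesian, as was already noted; more usefully, consider the diagram obtained by pulling back $\etaIJ$ along the locally closed immersion $\XIJ_z\hookrightarrow\XIJ$. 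The fibre product $Z\times_{\XIJ}\XIJ_z$ is exactly $(\etaIJ)^{-1}(\XIJ_z)$, so it suffices to identify this preimage and check that $\eta^z$ factors through it as a closed immersion onto a suitable piece.

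Second I would analyze $(\etaIJ)^{-1}(\XIJ_z)$ explicitly. A triple $(x,gB,hB)\in Z$ maps into $\XIJ_z$ iff $(gP_I,hP_J)$ lies in the orbit $G\cdot(P_I,zP_J)$, i.e. iff the $(W_I,W_J)$-double coset of the relative position of $(gB,hB)$ equals $W_IzW_J$. Writing the $G$-orbits on $G/B\times G/B$ as $\{G\cdot(B,wB)\}_{w\in W}$, this says $(\etaIJ)^{-1}(\XIJ_z)=\coprod_{w\in W_IzW_J}Z_w$. Among these pieces, $Z_z$ is a single $G$-orbit-preimage; I would check (using the cited facts from \cite[\S3]{douglassroehrle:geometry}, namely $\etaIJ(Z_{w_1zw_2})=\XIJ_z$ iff $w_1zw_2=z$, together with the dimension/closure statements there) that $Z_z$ is closed inside $(\etaIJ)^{-1}(\XIJ_z)$: indeed $Z_z$ is the preimage of the closed orbit $G\cdot(B,zB)$ within the closed subset $\bigsqcup_{w\in W_IzW_J}Z_w$ of $Z_{\ssleq z}$, since $z$ is the minimal-length representative and hence $G\cdot(B,zB)$ is closed in $\overline{\coprod_{w\in W_IzW_J}G\cdot(B,wB)}$ restricted appropriately. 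Alternatively, and perhaps more cleanly, $Z_z$ is closed in $Z_{\ssleq z}$ and the cartesian square~\eqref{eq:cart} exhibits $\eta^z$ as the base change of the proper map $\eta^{\ssleq z}$ (itself proper, being the restriction of the proper $\etaIJ$ to the closed subvariety $Z_{\ssleq z}$) along the closed immersion $r_z^{IJ}$; base change of a proper morphism is proper, so $\eta^z$ is proper.

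Actually the quickest route is the last one, so I would organize the write-up around it: (1) $\etaIJ$ is proper; (2) $Z_{\ssleq z}$ is closed in $Z$ (it is the preimage under the $\Gbar$-equivariant $q_Z$ of $\overline{G\cdot(B,zB)}$ intersected with the relevant locus, or directly $\coprod_{y\le z}Z_y$ with $\le$ the Bruhat order, which is closed since Bruhat order governs orbit closures), hence $\eta^{\ssleq z}=\etaIJ|_{Z_{\ssleq z}}$ is proper as a restriction of a proper map to a closed subscheme; (3) the square~\eqref{eq:cart} is cartesian with $r_z^{IJ}$ a (locally) closed immersion, so $\eta^z$ is obtained from $\eta^{\ssleq z}$ by base change along $r_z^{IJ}$ and is therefore proper. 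The main obstacle I anticipate is purely bookkeeping: verifying that $Z_{\ssleq z}$ really is closed in $Z$ and that~\eqref{eq:cart} is genuinely cartesian with the claimed schematic (not just set-theoretic) structure — these rest on the orbit-closure description of $\XIJ$ and $Z$ in terms of Bruhat order from \cite[\S3]{douglassroehrle:geometry} and \cite[Lemma 2.2]{douglass:inversion}, which I would cite rather than reprove. No serious difficulty beyond that; properness then follows formally.
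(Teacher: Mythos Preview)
Your proposal is correct, and in fact your ``second'' analysis---computing $(\etaIJ)^{-1}(\XIJ_z)=\coprod_{w\in W_IzW_J}Z_w$ and observing that $Z_z$ is closed therein because $z$ is the minimal-length double-coset representative---is exactly the paper's proof. Your final organized version via the cartesian square~\eqref{eq:cart} is a mild reorganization: instead of base-changing $\etaIJ$ along $\XIJ_z\hookrightarrow\XIJ$ and then composing with the closed immersion $Z_z\hookrightarrow Z_{W_IzW_J}$, you first pass to the closed subvariety $Z_{\ssleq z}$ (noting, as you should make explicit, that $\XIJ_{\ssleq z}$ is closed in $\XIJ$ so that $\eta^{\ssleq z}$ is genuinely proper as a map to $\XIJ_{\ssleq z}$) and then base-change along the open immersion $r_z^{IJ}$; both routes are equally short and rest on the same ingredients.
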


\begin{proof}
  Define $Z_{W_IzW_J} = \coprod_{w\in {W_IzW_J}} Z_w$. Then $Z_{{W_IzW_J}}=
  (\etaIJ)\inverse \big( \XIJ_z \big)$ and so by base change, the
  restriction of $\etaIJ$ to $Z_{{W_IzW_J}}$ is proper. Now $G\cdot (B,zB)$
  is closed in $\coprod_{w\in W_IzW_J} G\cdot (B, wB)$ and hence $Z_z$ is
  closed in $Z_{{W_IzW_J}}$. It follows that $\eta^z$ is proper.  \qed
\end{proof}

Since $\etaIJ(Z_{\leq z}) \subseteq \XIJ_{\leq z}$ for all $z$ in $\WIJ$
and~\eqref{eq:cart} is cartesian, the proper morphisms $\etaIJ$ and $\eta^z$
induce a map of short exact sequences such that the following diagram
commutes:
\begin{equation}
  \label{eq:exwz}
  \vcenter{\vbox{
      \xymatrix{0 \ar[r] & K^{\Gbar}(Z_{\sslt z}) \ar[r]^{()_*}
        \ar[d]^{\eta^{\sslt z}_*} &  K^{\Gbar}(Z_{\ssleq z}) \ar[r]^{()^*}
        \ar[d]^{\eta^{\ssleq  z}_*}& K^{\Gbar}(Z_{z})  \ar[r] \ar[d]^{\eta^{
            z}_*}& 0 \\ 
        0 \ar[r] & K^{\Gbar}(\XIJ_{\sslt z}) \ar[r]^{()_*} &
        K^{\Gbar}(\XIJ_{\ssleq z}) \ar[r]^{()^*} & K^{\Gbar}(\XIJ_{z})
        \ar[r]& 0 ,}   
    }}
\end{equation}
where $Z_{\sslt 1}= \XIJ_{\sslt 1}= \emptyset$.

Suppose $z$ is in $\WIJ$. Then $L_z = L_I\cap {}^zL_J =L_{\IzJ}$ is a Levi
subgroup of $P_I\cap {}^zP_J$ (see~\cite[\S69B] {curtisreiner:methodsII}).
Set
\[
B_z=L_z\cap B \quad\text{and} \quad \SNt_z= \{\, (x, hB_z)\in \SN_z\times
L_z/B_z\mid h\inverse x\in \fb_z\,\},
\]
and let 
\[
p_z\colon \SNt_z\to \SN_z\quad\text{and}\quad q_z\colon \SNt_z\to L_z/B_z
\]
be the projections. Then $B_z$ is a Borel subgroup of $L_z$ and $p_z$ is the
Springer resolution of $\SN_z$.

\begin{theorem}\label{thm:ost}
  Suppose $z$ is in $\WIJ$. Then there is a commutative diagram of
  $R(\Gbar)$-modules
  \begin{equation*}
    \xymatrix{ K^{\Gbar}(Z_z) \ar[r]_{\cong} \ar[d]^{\eta^z_*} &
      K^{\Lbar_z}(\SNt_z) \ar[d]^{(p_z)_*} \\ 
      K^{\Gbar}(\XIJ_z) \ar[r]_{\cong} & K^{\Lbar_z}(\SN_z), }  
  \end{equation*}
  where the horizontal maps are isomorphisms and the vertical maps are
  surjections.
\end{theorem}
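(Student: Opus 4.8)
The plan is to produce the two horizontal isomorphisms by realizing both $Z_z$ and $\XIJ_z$ as associated fibre bundles of the form $H\times^C F$, and then to invoke the restriction isomorphism $\res_F$ from the Notation section together with the induction-from-a-Levi identification $K^{\Cbar}(F)\cong K^{\Cbar_{\red}}(F)$. First I would fix $z\in\WIJ$ and analyze $\XIJ_z$. The projection $q_{IJ}$ restricts to a $\Gbar$-equivariant map $\XIJ_z\to G\cdot(P_I,zP_J)\cong G/(P_I\cap {}^zP_J)$, so setting $P_z=P_I\cap{}^zP_J$ and taking the fibre $F_z$ over the base point $(P_I,zP_J)$, we get $\XIJ_z\cong G\times^{P_z}F_z$ as $\Gbar$-varieties. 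One identifies the fibre: over $(P_I,zP_J)$, the condition $g^{-1}x\in\fp_I\cap z\fp_J=\fp_z$ (for $g=1$) means $F_z=\FN\cap\fp_z$, with $P_z$ acting by the adjoint action (and $\BBC^*$ by scaling). Since $P_z$ has Levi decomposition $P_z = (P_z)_{\operatorname{u}}\rtimes L_z$ with $L_z=L_I\cap{}^zL_J=L_{\IzJ}$ reductive, the restriction isomorphism gives
\[
K^{\Gbar}(\XIJ_z)\xrightarrow{\ \cong\ } K^{\overline{P}_z}(\FN\cap\fp_z)\xrightarrow{\ \cong\ } K^{\Lbar_z}(\FN\cap\fp_z).
\]
It remains to see that the inclusion $\SN_z=\FN\cap\fl_z\hookrightarrow\FN\cap\fp_z$ induces an isomorphism on $K^{\Lbar_z}$; this follows because $\FN\cap\fp_z$ retracts $L_z$-equivariantly onto $\SN_z$ via the $\BBC^*$-contraction of $\fu_z=\fp_z/\fl_z$ together with the observation (standard for parabolic nilpotent pieces) that the projection $\FN\cap\fp_z\to\fl_z$ has image $\SN_z$ and affine-space fibres, so the pullback $K^{\Lbar_z}(\SN_z)\to K^{\Lbar_z}(\FN\cap\fp_z)$ is an isomorphism by homotopy invariance. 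Composing gives the bottom horizontal isomorphism $K^{\Gbar}(\XIJ_z)\cong K^{\Lbar_z}(\SN_z)$.

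For the top row, I would run the parallel argument for $Z_z$. Here $q_Z$ restricts to $Z_z\to G\cdot(B,zB)\cong G/(B\cap{}^zB)$; write $B_z' = B\cap{}^zB$ and let the fibre be $\widetilde F_z = \FN\cap\fb\cap z\fb$, so $Z_z\cong G\times^{B_z'}\widetilde F_z$. The reductive part of $B_z'$ is $T$, but to match the right-hand side one pushes further: the description $Z_z=\{(x,gB,gzB)\mid g^{-1}x\in\fu\cap z\fu\}$ shows the fibre is $\fu\cap z\fu$, and $\fu\cap z\fu$ is the nilradical of $\fb_z = \fb\cap\fl_z$ inside $\fl_z$ together with the extra unipotent directions; more precisely one factors the bundle $Z_z\to G/P_z$ through $G/P_z$ and identifies the fibre over $(P_I,zP_J)$ with $\widetilde{\SN}_z = \{(x,hB_z)\in\SN_z\times L_z/B_z\mid h^{-1}x\in\fb_z\}$ up to an affine-bundle correction, giving $K^{\Gbar}(Z_z)\cong K^{\Lbar_z}(\SNt_z)$ by the same $\res$ and homotopy-invariance arguments. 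The key compatibility to check is that under these identifications the map $\eta^z_*\colon K^{\Gbar}(Z_z)\to K^{\Gbar}(\XIJ_z)$ becomes $(p_z)_*$: this is because $\eta^z$ is, fibrewise over $G/P_z$, exactly the Springer map $\SNt_z\to\SN_z$ (forgetting the flag $hB_z$), and proper pushforward commutes with the base change defining the associated bundle and with the restriction isomorphism $\res_F$ (functoriality of $\res$ for $\Hbar$-equivariant proper maps, via Thomason). That $\eta^z_*$ — equivalently $(p_z)_*$ — is surjective is the Springer-theoretic statement that $p_z$ is a resolution: $R^0(p_z)_*\CO_{\SNt_z}=\CO_{\SN_z}$ and $[\CO_{\SN_z}]$ generates, or more simply one cites \autoref{lem:ostlem}-type surjectivity of $p_*q^*$ applied inside $L_z$.

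The main obstacle I expect is the bookkeeping in the fibre identifications — specifically, verifying that stripping off the unipotent radical directions of $P_z$ (resp.\ $B_z'$) really does reduce $\FN\cap\fp_z$ to $\SN_z$ and $\FN\cap\fb\cap z\fb$ to $\SNt_z$ at the level of $K$-theory, and doing so $\Lbar_z$-equivariantly and $\BBC^*$-equivariantly at once. The cleanest route is: (i) use $\BBC^*$-contraction to reduce to the zero section where possible; (ii) for the remaining affine-bundle projections $\FN\cap\fp_z\to\SN_z$ use Thomason's homotopy invariance for equivariant $K$-theory of vector-bundle-like morphisms; (iii) be careful that these projections are genuinely $L_z$-equivariant affine bundles, which uses that $\fu_z$ is $L_z$-stable and that for $x\in\SN_z$ the fibre $(x+\fu_z)\cap\FN$ is an affine space (a theorem on parabolic nilpotent pieces; alternatively replace $\FN\cap\fp_z$ by the full $\fp_z$-cone and note the resolution-of-$\SN_z$ statement still goes through). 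Once the fibrewise picture is pinned down, the commutativity of the square and the surjectivity of the vertical maps are formal consequences of functoriality of $()_*$ and of $\res$, plus the fact that $p_z$ is a resolution, so the content is entirely in the geometric identification of the fibres.
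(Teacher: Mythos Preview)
Your proposal is correct and follows essentially the same route as the paper: realize both $Z_z$ and $\XIJ_z$ as associated bundles $G\times^{P_z}(-)$ over the orbit $G\cdot(P_I,zP_J)\cong G/P_z$, apply $\res$ to pass to $\Lbar_z$-equivariant $K$-theory of the fibres, strip off the $\fv_z$-directions by the Thom isomorphism, and check commutativity by base change (the fibrewise map is exactly $p_z$); surjectivity of $(p_z)_*$ is \autoref{lem:ostlem} for $L_z$, as you say. The ``bookkeeping'' you flag as the main obstacle is handled in the paper by the clean identifications $\FN\cap\fp_z=\SN_z+\fv_z$ and $\fu\cap z\fu=\fu_z+\fv_z$ (coming from the Levi decomposition $\fp_z=\fl_z+\fv_z$), so that the projections to $\SN_z$ and $\SNt_z$ are honest $\Lbar_z$-equivariant vector bundles with fibre $\fv_z$ and no further ``parabolic nilpotent pieces'' argument is needed.
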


This theorem is proved in \autoref{ssec:xijz}.

\begin{corollary}\label{cor:surj}
  The map $\eta^{\ssleq z}_*\colon K^{\Gbar}(Z_{\ssleq z}) \to
  K^{\Gbar}(\XIJ_{\ssleq z})$ is surjective for all $z$ in
  $\WIJ$. Therefore, the maps $\etaIJ_*\colon K^{\Gbar}(Z) \to
  K^{\Gbar}(\XIJ)$ and $\psi^{IJ}\colon \CHIJ\to K^{\Gbar}(\XIJ)$ are
  both surjective.
\end{corollary}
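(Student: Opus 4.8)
The plan is to deduce \autoref{cor:surj} from \autoref{thm:ost} by an inductive chase through the diagram~\eqref{eq:exwz}. First I would fix a linear extension of the Bruhat order on $\WIJ$ as in \autoref{sec:proof} and argue by induction on $z$ (equivalently on the index $i$ with $z=z_i$) that $\eta^{\ssleq z}_*$ is surjective. The base case $z=z_1$ is the minimal element, where $Z_{\sslt z}=\XIJ_{\sslt z}=\emptyset$, so $\eta^{\ssleq z}_*$ coincides with $\eta^{z}_*\colon K^{\Gbar}(Z_z)\to K^{\Gbar}(\XIJ_z)$, which is surjective by \autoref{thm:ost}. For the inductive step, I would apply the five lemma (or rather, the snake lemma / a direct diagram chase) to~\eqref{eq:exwz}: the left vertical map $\eta^{\sslt z}_*$ is surjective by the induction hypothesis (since $\XIJ_{\sslt z}=\XIJ_{\ssleq z'}$ for $z'$ the predecessor of $z$ in the chosen order, and likewise for $Z$), the right vertical map $\eta^z_*$ is surjective by \autoref{thm:ost}, the rows are exact, and hence the middle map $\eta^{\ssleq z}_*$ is surjective.

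Once the surjectivity of $\eta^{\ssleq z}_*$ is established for all $z$, taking $z$ to be the maximal element of $\WIJ$ gives $Z_{\ssleq z}=Z$ and $\XIJ_{\ssleq z}=\XIJ$, so $\etaIJ_*\colon K^{\Gbar}(Z)\to K^{\Gbar}(\XIJ)$ is surjective. For the final assertion, recall from the discussion preceding \autoref{thm:as} that the commutativity of~\eqref{eq:mainthm} forces $\psi^{IJ}\circ\chi^{IJ}=\etaIJ_*\circ\varphi$, with $\varphi$ an isomorphism and $\chi^{IJ}$ surjective; therefore $\psi^{IJ}$ is surjective as well, being equal to $\etaIJ_*\circ\varphi\circ(\chi^{IJ})^{-1}$ on $\CHIJ$ up to the well-definedness already arranged, or more simply because $\psi^{IJ}(\CHIJ)\supseteq\psi^{IJ}(\chi^{IJ}(\CH))=\etaIJ_*(\varphi(\CH))=\etaIJ_*(K^{\Gbar}(Z))=K^{\Gbar}(\XIJ)$.

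The only subtlety worth flagging is bookkeeping: one must check that the short exact sequence in the top row of~\eqref{eq:exwz} really is exact and really does match up the filtration pieces, i.e. that $Z_{\sslt z}$ equals the union $Z_{\ssleq z'}$ over $z'<z$ and sits as a closed subvariety of $Z_{\ssleq z}$ with open complement $Z_z$ — this is exactly the content of the localization sequence~\eqref{eq:exz} (proved in \autoref{ssec:ex}) applied to both $Z$ and $\XIJ$, together with \autoref{lem:hyp1} and the cartesian square~\eqref{eq:cart} guaranteeing that $\etaIJ$ carries the filtration on $Z$ to the filtration on $\XIJ$. Granting those facts, which are all stated earlier, the proof is a routine two-line diagram chase; the main obstacle is purely organizational, namely being careful that ``the predecessor of $z$ in the linear order'' indexes precisely $\XIJ_{\sslt z}$ so that the induction hypothesis applies to the left-hand vertical arrow.
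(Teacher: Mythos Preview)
Your overall strategy is right, but the induction step has a real gap hiding in the ``bookkeeping.'' The filtrations on the two sides of~\eqref{eq:exwz} are indexed differently: $Z_{\sslt z}=\coprod_{y\in W,\,y<z}Z_y$ runs over all of $W$, while $\XIJ_{\sslt z}=\coprod_{y\in\WIJ,\,y<z}\XIJ_y$ runs only over $\WIJ$. Consequently $Z_{\sslt z}$ is \emph{not} equal to $Z_{\ssleq z'}$ for the predecessor $z'$ of $z$ in your linear order on $\WIJ$, nor even to $\bigcup_{z'\in\WIJ,\,z'<z}Z_{\ssleq z'}$. For instance, with $W=S_3$, $I=\{s\}$, $J=\emptyset$, one has $\WIJ=\{1,t,ts\}$ and $Z_{\sslt ts}=Z_1\cup Z_s\cup Z_t$, whereas $\bigcup_{z'\in\WIJ,\,z'<ts}Z_{\ssleq z'}=Z_1\cup Z_t$; the piece $Z_s$ is missing. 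So the induction hypothesis, which is a statement about the maps $\eta^{\ssleq z'}_*$ for $z'\in\WIJ$, does not directly tell you that $\eta^{\sslt z}_*$ is surjective.

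The paper closes this gap by an extra step: it invokes~\cite[\S3.17]{kazhdanlusztig:langlands} (and the analogue via \autoref{lem:k1}) to get surjections $f_1\colon\bigoplus_{y<z}K^{\Gbar}(Z_{\ssleq y})\twoheadrightarrow K^{\Gbar}(Z_{\sslt z})$ (sum over $y\in W$) and $f_2\colon\bigoplus_{y\in\WIJ,\,y<z}K^{\Gbar}(\XIJ_{\ssleq y})\twoheadrightarrow K^{\Gbar}(\XIJ_{\sslt z})$. Since every $y\in\WIJ$ with $y<z$ also lies in $W$, the induction hypothesis applied to those summands, together with the surjectivity of $f_2$, forces $\eta^{\sslt z}_*\circ f_1$ to be surjective, hence $\eta^{\sslt z}_*$ is surjective. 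Only then does the Five Lemma apply. A second, smaller slip: for the maximal $z_0\in\WIJ$ one has $\XIJ_{\ssleq z_0}=\XIJ$ but \emph{not} $Z_{\ssleq z_0}=Z$ (in the $S_3$ example above, $Z_{st}$ and $Z_{sts}$ are absent). The paper therefore concludes by noting that $\eta^{\ssleq z_0}_*=\etaIJ_*\circ(j_{z_0})_*$, so surjectivity of the composite gives surjectivity of $\etaIJ_*$.
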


\begin{proof}
  We show that $\eta^{\ssleq z}_*$ is surjective using induction on $z$ in
  the Bruhat order on $\WIJ$. It is shown
  in~\cite[\S3.17]{kazhdanlusztig:langlands} that the image of
  $K^{\Gbar}(Z_{\sslt z})$ in $K^{\Gbar}(Z)$ coincides with the sum of the
  images of the spaces $K^{\Gbar}(Z_{\ssleq y})$ for $y<w$. A similar
  argument, using~\autoref{lem:k1}, shows that the image of
  $K^{\Gbar}(\XIJ_{\sslt z})$ in $K^{\Gbar}(\XIJ)$ coincides with the sum of
  the images of the spaces $K^{\Gbar}(\XIJ_{\ssleq y})$ for $y$ in $\WIJ$
  with $y<z$. Thus, the natural maps
  \[
  \bigoplus_{y< z} K^{\Gbar}(Z_{\ssleq y}) \xrightarrow{\ f_1\ }
  K^{\Gbar}(Z_{\sslt z}) \quad\text{and} \quad \bigoplus_{\substack{y\in
      \WIJ \\ y< z}} K^{\Gbar}(\XIJ_{\ssleq y}) \xrightarrow{\ f_2\ }
  K^{\Gbar}(\XIJ_{\sslt z})
  \]
  are surjective. The composition $\bigoplus_{y< z} K^{\Gbar}(Z_{\ssleq y})
  \xrightarrow{\ f_1\ } K^{\Gbar}(Z_{\sslt z}) \xrightarrow{\eta^{\sslt
      z}_*} K^{\Gbar}(\XIJ_{\sslt z})$ is equal the composition
  $\bigoplus_{y< z} K^{\Gbar}(Z_{\ssleq y}) \xrightarrow{\ \ }
  \bigoplus_{\substack{y\in \WIJ \\ y< z}} K^{\Gbar}(\XIJ_{\ssleq y})
  \xrightarrow{\ f_2\ } K^{\Gbar}(\XIJ_{\sslt z})$. By induction, and the
  fact that the map $f_2$ is surjective, this last composition is
  surjective. It follows that $\eta^{\sslt z}_*$ is surjective. Now the fact
  that $\eta^{\ssleq z}_*$ is surjective follows from
  diagram~\eqref{eq:exwz} by the Five Lemma and~\autoref{thm:ost}.

  Let $z_0$ be the element in $\WIJ$ with maximal length. Then $\XIJ_{\ssleq
    z_0}= \XIJ$ and so taking $z=z_0$ we see that the map
  $K^{\Gbar}(Z_{\ssleq z_0})\to K^{\Gbar}(\XIJ)$ induced by $\etaIJ$ is a
  surjection. This map is $\etaIJ_* \circ (j_{z_0})_*$, where $j_{z_0}\colon
  Z_{\ssleq z_0} \to Z$ is the inclusion. Therefore, $\etaIJ_*$ is
  surjective and so $\psi^{IJ}$ is surjective. \qed
\end{proof}

To define filtrations on $\CH$ and $\CHIJ$ compatible with
diagram~\eqref{eq:exwz} we need to study $(W_I,W_J)$-double cosets in
$\Wex$. For $I\subseteq S$ define
\[
X(T)_I^+ =\{\, \lambda\in X(T) \mid \langle \lambda, \check \alpha \rangle
\geq0\ \forall \alpha\in \Pi_I\,\},
\]
where $\langle \,\cdot\,, \,\cdot\,\rangle$ is the usual pairing between
$X(T)$ and $\Hom(\BBC^*, T)$, the group of co-characters of $T$, and $\{\,
\check \alpha\mid \alpha\in \Phi\,\}$ is the dual root system in
$\Hom(\BBC^*, T)$. Then $X(T)_{\emptyset}^+=X(T)$ and $X(T)_S^+=X(T)^+$, the
set of dominant weights.  Because $X(T)^+$ is a fundamental domain for the
action of $W$ on $X(T)$, it follows that $\{\, t_\lambda\mid \lambda\in
X(T)^+\,\}$ is a set of $(W,W)$-double coset representatives in $\Wex$. The
next lemma is the analog of this fact for $(W_I,W_J)$-double cosets.

\begin{lemma}\label{lem:double}
  Every $(W_I,W_J)$-double coset in $\Wex$ contains a unique representative
  of the form $t_\lambda z$, where $z$ is in $\WIJ$ and $\lambda$ is in
  $X(T)_{\IzJ}^+$.
\end{lemma}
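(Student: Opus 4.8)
The plan is to establish the bijection in two stages: first reduce $(W_I,W_J)$-double cosets in $\Wex$ to double cosets in $\Wex$ under $W_I$ and $W_J$ with an element $z\in\WIJ$ factored out, then show that within each such "fibre" the translations $t_\lambda$ with $\lambda\in X(T)_{\IzJ}^+$ give a unique representative. For the first stage, recall $\Wex = X(T)\rtimes W$, so every element has the form $t_\mu w$ with $\mu\in X(T)$, $w\in W$. Given a $(W_I,W_J)$-double coset, I would first project to $W$ along the quotient $\Wex\twoheadrightarrow W$ (which sends $t_\mu w\mapsto w$ and is $W_I$-bi-$W_J$-equivariant), landing in a $(W_I,W_J)$-double coset of $W$; this picks out a unique $z\in\WIJ$, the minimal-length representative. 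So it suffices to parametrize the $(W_I,W_J)$-double cosets in $\Wex$ that project to $W_IzW_J$.

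The key computation is the following. Using $\ell(w_1zw_2)=\ell(w_1)+\ell(z)+\ell(w_2)$ for $w_1\in W_I$, $w_2\in W_J$ (the defining property of minimal double coset representatives together with $W_I\cap {}^zW_J = W_{\IzJ}$), I would show that the double coset $W_I\,(t_\mu w)\,W_J$ meeting the fibre over $z$ can always be written $W_I\,(t_\nu z)\,W_J$ for some $\nu\in X(T)$: write $w=w_1zw_2$ and move the $W$-parts past the translation using $w\,t_\mu = t_{w(\mu)}\,w$, absorbing $w_1$ on the left and $w_2$ on the right, which forces the translation part to transform by $w_1\in W_I$. Thus $W_I(t_\mu w)W_J \cap (X(T)\rtimes 1)z$ is a single $W_{\IzJ}$-orbit: precisely, $t_\nu z$ and $t_{\nu'}z$ lie in the same $(W_I,W_J)$-double coset iff $\nu' \in W_{\IzJ}\nu$, because conjugating $t_\nu z$ by $W_I$ on the left and $W_J$ on the right and demanding the result again have the form $t_{\nu'}z$ (no nontrivial $W$-part) pins the Weyl-group action down to $\{w_1\in W_I : w_1 z \in zW_J\} = \{w_1 : z^{-1}w_1 z\in W_J\}$, i.e. $W_{\IzJ}$ acting by its image in $W$ on $X(T)$. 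Finally, since $X(T)_{\IzJ}^+$ is by definition the dominant chamber for the root subsystem $\Phi_{\IzJ}$ of $L_z=L_{\IzJ}$ — and since $W_{\IzJ}$ is the Weyl group of $\Phi_{\IzJ}$ acting on $X(T)$, which (as $G$ is reductive) contains the coroot lattice, so $X(T)_{\IzJ}^+$ is a fundamental domain — each $W_{\IzJ}$-orbit on $X(T)$ meets $X(T)_{\IzJ}^+$ in exactly one point. Combining the two stages gives the claimed unique representative $t_\lambda z$.

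The main obstacle I anticipate is bookkeeping the precise form of the $(W_I,W_J)$-action on the translation part and verifying that the relevant stabilizer/acting group really is $W_{\IzJ}$ and not something larger or smaller: one must check that an element $w_1 (t_\nu z) w_2$ with $w_1\in W_I$, $w_2\in W_J$ has trivial $W$-part exactly when $w_1 z w_2 = z$, i.e. $w_2 = z^{-1}w_1^{-1} z$, which requires $z^{-1}w_1 z\in W_J$, hence $w_1\in W_I\cap {}^z W_J$, and then (using that $z$ has minimal length in its double coset, so $z^{-1}\Phi_I^+ \cap \Phi^- \subseteq \Phi^-\setminus\Phi_J$ type statements) that $W_I\cap {}^zW_J$ indeed equals $W_{\IzJ}$ — this last identity is standard for parabolic subgroups and is recorded in \cite[\S69B]{curtisreiner:methodsII}, which the paper already cites. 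Once this is in hand, the action of $w_1$ on $t_\nu$ is conjugation $t_\nu\mapsto t_{w_1(\nu)}$, and the fundamental-domain property of $X(T)_{\IzJ}^+$ finishes the uniqueness.
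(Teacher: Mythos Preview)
Your proof is correct and follows essentially the same approach as the paper's: reduce to representatives $t_\lambda z$ with $z\in\WIJ$, identify when two such are $(W_I,W_J)$-equivalent via the computation $w_1 t_\lambda z w_2 = t_{w_1(\lambda)} w_1 z w_2$ (which equals $t_\mu z$ iff $w_1 z w_2 = z$, i.e., $w_1 \in W_I \cap {}^z W_J = W_{\IzJ}$), and then invoke the fundamental-domain property of $X(T)_{\IzJ}^+$. The length-additivity formula $\ell(w_1zw_2)=\ell(w_1)+\ell(z)+\ell(w_2)$ you mention in passing is not needed here (and does not hold for arbitrary $w_1\in W_I$, $w_2\in W_J$ without a coset-representative restriction on one side), but your actual argument never uses it.
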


\begin{proof}
  If $\lambda$ is in $X(T)$, $w$ is in $W$, $w_1$ is in $W_I$, and
  $w_2$ is in $W_J$, then $w_1t_\lambda ww_2= t_{w_1(\lambda)}
  w_1ww_2$. Thus, every $(W_I,W_J)$-double coset has a representative
  of the form $t_\lambda z$ where $z$ is in $\WIJ$. Suppose that $z$
  is in $\WIJ$ and $\lambda$ and $\mu$ are in $X(T)$. Then $t_\mu z=
  w_1t_\lambda zw_2$ for some $w_1$ in $W_I$ and $w_2$ in $W_J$ if and
  only if $\mu= w_1(\lambda)$ and $z=w_1zw_2$. Therefore, $t_\lambda
  z$ and $t_\mu z$ are in the same $(W_I,W_J)$-double coset if and
  only if $\mu= w_1(\lambda)$ for some $w_1$ in $W_I\cap
  {}^zW_J$. Because $W_I\cap {}^zW_J= W_{\IzJ}$ (see
  \cite[\S64C]{curtisreiner:methodsII}), we see that $t_\lambda z$ and
  $t_\mu z$ are in the same $(W_I,W_J)$-double coset if and only if
  $\mu= w_1(\lambda)$ for some $w_1$ in $W_{\IzJ}$. Finally,
  $X(T)_{\IzJ}^+$ is a fundamental domain for the action of $W_{\IzJ}$
  on $X(T)$ and so every $(W_I,W_J)$-double coset in $\Wex$ contains a
  unique representative of the form $t_\lambda z$, where $z$ is in
  $\WIJ$ and $\lambda$ is in $X(T)_{\IzJ}^+$. \qed
\end{proof}

For $z$ in $\WIJ$, set 
\[
X(T)_z^+=X(T)_{\IzJ}^+.
\]
Then $X(T)_z^+$ is the set of weights that are dominant for $L_{z}$ with
respect to the Borel subgroup $B_z$.

Let $\epsilon_I=\sum_{w\in W_I} \epsilon_ww$ for $I\subseteq
S$. Then~\autoref{lem:double} shows that $\{\, \epsilon_I t_\lambda z
\epsilon_J \mid z\in \WIJ,\, \lambda \in X(T)_{z} ^+ \,\}$ is a $\BBZ$-basis
of $\epsilon_I \BBZ[\Wex] \epsilon_J$. A similar statement is true for
$\CHIJ$.

\begin{theorem}\label{thm:hijbasis}
  The set $\{\, C_{w_I} \theta_\lambda T_z C_{w_J} \mid z\in \WIJ,\, \lambda
  \in X(T)_{z} ^+ \,\}$ is an $A$-basis of $\CHIJ$.
\end{theorem}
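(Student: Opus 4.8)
The plan is to verify separately that the indicated set spans $\CHIJ$ over $A$ and that it is $A$-linearly independent; the spanning is the bulk of the work. For independence I would use the specialization $v\mapsto 1$. It defines a ring homomorphism $\pi\colon\CH\to\BBZ[\Wex]$ with $\pi(T_w)=w$ for $w\in W$ and $\pi(\theta_\lambda)=t_\lambda$ — this is legitimate because the right-hand side of the Bernstein relation in~\eqref{eq:bernstein} carries the factor $v^2-1$ and because $(\theta_\lambda-\theta_{s(\lambda)})/(1-\theta_{-\alpha})$ is a genuine element of $A[X(T)]$, since $\langle\lambda,\check\alpha\rangle\in\BBZ$. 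From $C_{w_I}=(-v)^{\ell(w_I)}\sum_{y\in W_I}\epsilon_y v_y^{-2}T_y$ one gets $\pi(C_{w_I})=(-1)^{\ell(w_I)}\epsilon_I$ and likewise $\pi(C_{w_J})=(-1)^{\ell(w_J)}\epsilon_J$, so $\pi(C_{w_I}\theta_\lambda T_zC_{w_J})=(-1)^{\ell(w_I)+\ell(w_J)}\,\epsilon_I t_\lambda z\,\epsilon_J$. By the remark following~\autoref{lem:double}, the elements $\epsilon_I t_\lambda z\,\epsilon_J$ with $z\in\WIJ$ and $\lambda\in X(T)_z^+$ form a $\BBZ$-basis of $\epsilon_I\BBZ[\Wex]\epsilon_J$. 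Given any nontrivial $A$-linear relation among the $C_{w_I}\theta_\lambda T_zC_{w_J}$, divide it by the highest power of the prime $v-1$ dividing all of the (finitely many) coefficients and apply $\pi$; the result is a nontrivial $\BBZ$-linear relation among the $\epsilon_I t_\lambda z\,\epsilon_J$, which is impossible.

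For spanning, since the Bernstein basis $\{\,\theta_\mu T_w\,\}$ spans $\CH$, it suffices to place each $C_{w_I}\theta_\mu T_wC_{w_J}$ ($w\in W$, $\mu\in X(T)$) into the claimed span, and I would do this in two reductions. The first reduces $w$ to $\WIJ$ by induction on $\ell(w)$. If $w\notin\WIJ$ then either $\ell(ws)<\ell(w)$ for some $s\in J$, in which case $T_wC_{w_J}=-T_{ws}C_{w_J}$ lowers $\ell(w)$; or $\ell(tw)<\ell(w)$ for some $t=s_\beta\in I$, in which case write $T_w=T_tT_{w'}$ with $\ell(w')<\ell(w)$, commute $\theta_\mu$ past $T_t$ using $\theta_\mu T_t=T_t\theta_{t\mu}+(v^2-1)(\theta_\mu-\theta_{t\mu})/(1-\theta_{-\beta})$, and absorb $T_t$ via $C_{w_I}T_t=-C_{w_I}$. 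As $(\theta_\mu-\theta_{t\mu})/(1-\theta_{-\beta})$ is an explicit $\BBZ$-linear combination of monomials $\theta_\nu$, this rewrites $C_{w_I}\theta_\mu T_wC_{w_J}$ as an $A$-combination of terms $C_{w_I}\theta_\nu T_{w'}C_{w_J}$ with $\ell(w')<\ell(w)$, to which the induction applies. This leaves terms $C_{w_I}\theta_\lambda T_zC_{w_J}$ with $z\in\WIJ$ and $\lambda\in X(T)$ arbitrary.

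The second reduction, for fixed $z\in\WIJ$, brings $\theta_\lambda$ into $X(T)_z^+$. The crucial input is that for $t=s_\beta$ with $\beta\in\Pi_{\IzJ}$ one has $T_tT_z=T_zT_{z\inverse tz}$ with $z\inverse tz$ a \emph{simple} reflection lying in $J$; this is a standard property of minimal double coset representatives (see \cite[\S64C]{curtisreiner:methodsII}), following from $W_I\cap{}^zW_J=W_{\IzJ}$ together with the fact that $z$, having minimal length in $W_Iz$, satisfies $z\inverse(\Phi^+_{\IzJ})\subseteq\Phi^+$. Combining $C_{w_I}=-C_{w_I}T_t$, the relation $T_t\theta_\lambda=\theta_{t\lambda}T_t+(v^2-1)(\theta_\lambda-\theta_{t\lambda})/(1-\theta_{-\beta})$, the identity $T_{z\inverse tz}C_{w_J}=-C_{w_J}$, and the elementary computation $(\theta_\lambda-\theta_{t\lambda})/(1-\theta_{-\beta})=-\sum_{j=1}^{-n}\theta_{\lambda+j\beta}$ valid when $n:=\langle\lambda,\check\beta\rangle<0$ (note $\lambda-n\beta=t\lambda$), I obtain
\[
C_{w_I}\theta_\lambda T_zC_{w_J}=v^2\,C_{w_I}\theta_{t\lambda}T_zC_{w_J}+(v^2-1)\sum_{j=1}^{-n-1}C_{w_I}\theta_{\lambda+j\beta}T_zC_{w_J}.
\]
Let $h(\lambda)$ be the sum of the coefficients of $\lambda^{+}-\lambda$ in the basis $\Pi_{\IzJ}$, where $\lambda^{+}\in X(T)_z^+$ is the $W_{\IzJ}$-dominant point of $W_{\IzJ}\lambda$; then $h$ is a non-negative integer vanishing exactly on $X(T)_z^+$. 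If $\lambda\notin X(T)_z^+$ choose $\beta\in\Pi_{\IzJ}$ with $\langle\lambda,\check\beta\rangle<0$; then $h(t\lambda)=h(\lambda)-|n|<h(\lambda)$, and each $\lambda+j\beta$ ($1\le j\le -n-1$) lies in $\mathrm{conv}(W_{\IzJ}\lambda)$ and so also has $h(\lambda+j\beta)<h(\lambda)$. Thus the displayed identity and induction on $h(\lambda)$ finish the reduction, and combining the two steps proves that the given set spans $\CHIJ$.

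The routine ingredients are the length induction of step one, the monomial expansion of $(\theta_\lambda-\theta_{t\lambda})/(1-\theta_{-\beta})$, and the $v-1$ bookkeeping in the independence argument. The two points requiring genuine care are the displayed identity in step two — which is exactly why one needs $z\inverse tz$ to be a \emph{simple} reflection of $J$ rather than merely an element of $W_J$ — and the verification that the auxiliary weights $\lambda+j\beta$ strictly decrease the statistic $h$, which rests on the convexity of the weight polytope $\mathrm{conv}(W_{\IzJ}\lambda)$ relative to the dominance order (so that $(\lambda+j\beta)^{+}\preceq\lambda^{+}$). I expect the convexity step to be the only place where one must be slightly careful.
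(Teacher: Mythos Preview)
Your proof is correct and follows the same overall strategy as the paper: linear independence via the specialization $v\mapsto 1$ and \autoref{lem:double}, and spanning via a two-step reduction first to $z\in\WIJ$ and then to $\lambda\in X(T)_z^+$. The implementations differ only in detail. For the first reduction the paper commutes $\theta_\lambda$ past $T_{w_1}$ in one stroke (using that $\theta_\lambda T_{w_1}$ lies in the span of $\{T_y\theta_\mu\mid y\le w_1\}$) and then absorbs $T_y$ and $T_{w_2}$ into $C_{w_I}$ and $C_{w_J}$, whereas you peel off one simple reflection at a time; both arrive at the same intermediate spanning set $\{\,C_{w_I}\theta_\lambda T_zC_{w_J}\mid z\in\WIJ,\ \lambda\in X(T)\,\}$. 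For the second reduction the paper invokes the Nelsen--Ram recursion (deriving a three-term identity and iterating it) together with a monotonicity claim on the pairings $\langle\mu',\check\beta\rangle$, while you work directly with the expanded form of the same identity and terminate via the height statistic $h(\lambda)=\mathrm{ht}(\lambda^{+}-\lambda)$ and convexity of the $W_{\IzJ}$-weight polytope. Your convexity argument is a clean, self-contained alternative to citing Nelsen--Ram; the one point worth making fully explicit is the inequality $h(\lambda+j\beta)\le h(\lambda)-j$, which follows from $(\lambda+j\beta)^{+}\preceq\lambda^{+}$ since $\lambda+j\beta$ lies on the segment $[\lambda,t\lambda]\subseteq\mathrm{conv}(W_{\IzJ}\lambda)$ and differs from $\lambda$ by an element of the root lattice of $\Phi_{\IzJ}$.
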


This theorem is proved in \autoref{sec:basis}.

Recall that $\{\, \theta_\lambda T_w\mid \lambda\in X(T), w\in W\,\}$ is the
Bernstein basis of $\CH$. By analogy, we call $\{\, C_{w_I} \theta_\lambda
T_z C_{w_J} \mid z\in \WIJ,\, \lambda \in X(T)_{z} ^+ \,\}$ the
\emph{Bernstein basis} of $\CHIJ$. We will see that
\[
\{\, \psi^{IJ}(C_{w_I} \theta_\lambda T_z C_{w_J})\mid z\in \WIJ,\, \lambda
\in X(T)_{z} ^+ \,\}
\]
is an $A$-basis of $K^{\Gbar}(\XIJ)$.

For $w$ in $W$ and $z$ in $\WIJ$ define
\begin{align*}
  \CH_{w}&= \operatorname{span}_A\{\, \theta_\lambda T_w\mid \lambda \in
  X(T)\,\}, & && \CH_{\ssleq w}&= \sum_{y\leq w} \CH_y, \\
  \CHIJ_{z}&= \operatorname{span}_A\{\, C_{w_I} \theta_\lambda T_zC_{w_J}
  \mid \lambda \in X(T)\,\}, & &\text{and}& \CHIJ_{\ssleq z}&= \sum_{y\leq
    z} \CHIJ_{y},
\end{align*}
where the last sum is over $y$ in $\WIJ$. The proof
of~\autoref{thm:hijbasis} shows that in fact $\{\, C_{w_I} \theta_\lambda
T_zC_{w_J} \mid \lambda \in X(T)^+_z\,\}$ is an $A$-basis of
$\CHIJ_z$. Extending the definition of $\CL_\lambda$ for $\lambda$ in
$X(T)$, let $\CL^z_\lambda$ be the canonical $\Lbar_z$-equivariant line
bundle on $L_z/B_z$ determined by $\lambda$. Also, let $\chi^{\ssleq
  z}\colon \CH_{\ssleq z}\to \CHIJ_{\ssleq z}$ be the restriction of
$\chi^{IJ}$. We can now extend the diagram in~\autoref{thm:ost} to include
$\CH_{\ssleq z}$ and $\CHIJ_{\ssleq z}$.

\begin{theorem}\label{thm:wideiso2}
  Suppose $z$ is in $\WIJ$. Then there is a commutative diagram of
  $A$-modules
  \begin{equation} 
    \label{eq:wideiso}
    \vcenter{\vbox{    
        \xymatrix{%
          \CH_{\ssleq z} \ar[r]^-{}_-{\cong} \ar[d]^{\chi^{\ssleq z}}& 
          K^{\Gbar}(Z_{\ssleq z}) \ar@{->>}[r]^-{r_z^*} \ar[d]^{\eta^{\ssleq
              z}_*}& K^{\Gbar}(Z_z) \ar[r]_-{\cong} \ar[d]^{\eta^z_*} &
          K^{\Lbar_z}(\SNt_z) \ar[d]^{(p_z)_*} \\ 
          \CHIJ_{\ssleq z} \ar[r]^-{}_-{\cong} & K^{\Gbar}(\XIJ_{\ssleq z})
          \ar@{->>}[r]^-{(r_z^{IJ})^*} & K^{\Gbar}(\XIJ_z) \ar[r]_-{\cong} &
          K^{\Lbar_z}(\SN_z), }
      }}   
  \end{equation}
  where
  \begin{enumerate}
  \item the horizontal maps are surjective or bijective, as indicated, and
    the vertical maps are surjective, \label{i:wi1}
  \item if $f_1$ is the composition across the top row, then $\CH_y$ is in
    the kernel of $f_1$ for all $y<z$ and the restriction of $f_1$ to
    $\CH_{z}$ is the isomorphism that carries $\theta_\lambda T_z$ to
    $\epsilon_z q_z^*[\CL^z_\lambda]$ for $\lambda$ in $X(T)$,
    and \label{i:wi2}
  \item if $f_2$ is the composition across the bottom row, then $\CHIJ_y$ is
    in the kernel of $f_2$ for all $y<z$ in $\WIJ$ and the restriction of
    $f_2$ to $\CHIJ_{z}$ is the isomorphism that carries $C_{w_I}
    \theta_\lambda T_zC_{w_J}$ to $\epsilon_z (p_z)_*q_z^*[\CL^z_\lambda]$
    for $\lambda$ in $X(T)_{z}^+$. \label{i:wi3}
  \end{enumerate}
\end{theorem}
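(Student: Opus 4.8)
The plan is to build the diagram~\eqref{eq:wideiso} out of its three component squares, starting from the right, and then to settle the bijectivity/surjectivity claims together with the two explicit identifications by an induction on $z$ in the Bruhat order on $\WIJ$. Every arrow in~\eqref{eq:wideiso} is $R(\Gbar)$-linear (direct images along proper maps, restrictions to open subvarieties, the $\res$-isomorphisms of the introduction, $\varphi$, $\chi^{IJ}$, $\psi^{IJ}$), hence $A$-linear, so there is nothing to check about module structures. The rightmost square — together with the bijectivity of its two horizontal maps and the surjectivity of $\eta^z_*$ and $(p_z)_*$ — is exactly \autoref{thm:ost}; I shall use the explicit isomorphisms $K^{\Gbar}(Z_z)\xrightarrow{\cong}K^{\Lbar_z}(\SNt_z)$ and $K^{\Gbar}(\XIJ_z)\xrightarrow{\cong}K^{\Lbar_z}(\SN_z)$ constructed in \autoref{ssec:xijz} from Thomason's theorem, which carry classes of pullbacks of line bundles to classes of pullbacks of line bundles. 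The middle square is the right-hand square of~\eqref{eq:exwz}: the maps $r_z^*$ and $(r_z^{IJ})^*$ are the restrictions to the open subvarieties $Z_z\subseteq Z_{\ssleq z}$ and $\XIJ_z\subseteq\XIJ_{\ssleq z}$, they are surjective because the rows of~\eqref{eq:exwz} are exact, and the square commutes because~\eqref{eq:cart} is cartesian and $\etaIJ,\eta^z$ are proper.

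For the leftmost square, the top map is $\varphi$ restricted to $\CH_{\ssleq z}$: by \cite[\S8.6]{lusztig:bases} the filtration of $\CH$ by the submodules $\CH_{\ssleq w}$ is carried by $\varphi$ onto the filtration of $K^{\Gbar}(Z)$ by the images of the direct images $K^{\Gbar}(Z_{\ssleq w})\to K^{\Gbar}(Z)$, and these direct images are injective, so $\varphi$ restricts to an isomorphism $\CH_{\ssleq z}\xrightarrow{\cong}K^{\Gbar}(Z_{\ssleq z})$ taking $\CH_{\sslt z}$ onto $K^{\Gbar}(Z_{\sslt z})$. The bottom map is the corestriction of $\psi^{IJ}$: from $\psi^{IJ}\circ\chi^{IJ}=\etaIJ_*\circ\varphi$ (diagram~\eqref{eq:mainthm}), the inclusion $\varphi(\CH_{\ssleq z})\subseteq K^{\Gbar}(Z_{\ssleq z})$, and $\etaIJ(Z_{\ssleq z})\subseteq\XIJ_{\ssleq z}$, we get $\psi^{IJ}(\CHIJ_{\ssleq z})\subseteq K^{\Gbar}(\XIJ_{\ssleq z})$; by \autoref{cor:surj} this corestriction is onto, and the square commutes by construction. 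The vertical map $\chi^{\ssleq z}$ is onto because $\chi^{IJ}(\CH_{\ssleq z})=\CHIJ_{\ssleq z}$ (a consequence of~\eqref{eq:cxts} together with the order-theoretic facts about $(W_I,W_J)$-double cosets used in the proof of \autoref{thm:hijbasis}, in particular \cite[Lemma~2.2]{douglass:inversion}), and $\eta^{\ssleq z}_*$ is onto by \autoref{cor:surj}; this disposes of the first assertion. Writing $f_1$ for the composite across the top row, the fact that $\varphi$ sends $\CH_{\sslt z}$ into $\ker r_z^*$ shows $\CH_y\subseteq\ker f_1$ for $y<z$, so $f_1$ factors through $\CH_{\ssleq z}/\CH_{\sslt z}\cong\CH_z$ and restricts there to an isomorphism onto $K^{\Lbar_z}(\SNt_z)$.

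The heart of the proof is to identify this last isomorphism, i.e. to show it carries $\theta_\lambda T_z$ to $\epsilon_z q_z^*[\CL^z_\lambda]$. Both $f_1|_{\CH_z}$ and the asserted map are $A[X(T)]$-module isomorphisms $A[X(T)]\,T_z\to K^{\Lbar_z}(\SNt_z)$, where $A[X(T)]$ acts on the target through $q_z^*$ and the identification $K^{\Lbar_z}(L_z/B_z)\cong R(\Tbar)$ — for $f_1$ this is because $\varphi$ is an algebra homomorphism and multiplication by $\varphi(\theta_\mu)$ acts on the graded piece $K^{\Gbar}(Z_z)$ as the line bundle it represents, which \autoref{thm:ost}'s isomorphism matches with $q_z^*[\CL^z_\mu]$. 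So it is enough to evaluate both on $T_z$, i.e. to show $f_1(T_z)=\epsilon_z[\CO_{\SNt_z}]$. For this one invokes Lusztig's description of $\varphi$ on the standard basis (\cite[\S7]{lusztig:bases}; compare \cite[\S7.6]{chrissginzburg:representation}): restricted to the open cell $Z_z$, the class $\varphi(T_z)$ is $\epsilon_z$ times the structure sheaf of $Z_z$, the sign $\epsilon_z=(-1)^{\ell(z)}$ being precisely what is produced by iterating the identity $\varphi(T_s+1)=-v\,\mathbf a_s$ of \cite[\S7.25]{lusztig:bases} along a reduced word for $z$; and \autoref{thm:ost}'s isomorphism $K^{\Gbar}(Z_z)\cong K^{\Lbar_z}(\SNt_z)$ sends $[\CO_{Z_z}]$ to $[\CO_{\SNt_z}]$ by the way it is built from Thomason's theorem. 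Reconciling Lusztig's normalization of $\varphi$ with the line-bundle conventions fixed for \autoref{thm:ost} — and in particular nailing down this sign — is the step I expect to be the main obstacle; everything else is formal.

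Granting the formula for $f_1|_{\CH_z}$, the remaining assertions follow by a diagram chase and an induction. Since the three squares commute and $\chi^{\ssleq z}$ is onto, $f_2\circ\chi^{\ssleq z}=(p_z)_*\circ f_1$ for $f_2$ the composite across the bottom row; hence $f_2$ kills $\CHIJ_y=\chi^{IJ}(\CH_y)$ for $y<z$, and $f_2(C_{w_I}\theta_\lambda T_zC_{w_J})=(p_z)_*f_1(\theta_\lambda T_z)=\epsilon_z(p_z)_*q_z^*[\CL^z_\lambda]$ for all $\lambda\in X(T)$. Restricting to $\lambda\in X(T)^+_z$ and using that $\{C_{w_I}\theta_\lambda T_zC_{w_J}\mid\lambda\in X(T)^+_z\}$ is an $A$-basis of $\CHIJ_z$ (proof of \autoref{thm:hijbasis}) while $\{(p_z)_*q_z^*[\CL^z_\lambda]\mid\lambda\in X(T)^+_z\}$ is an $A$-basis of $K^{\Lbar_z}(\SN_z)$ (\autoref{lem:ostlem} for the reductive group $L_z$, whose derived group is again simply connected), $f_2$ restricts to an isomorphism on $\CHIJ_z$, and the same reasoning on the top row re-derives the $f_1$ statement. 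Finally, to see that $\CHIJ_{\ssleq z}\to K^{\Gbar}(\XIJ_{\ssleq z})$ is bijective I induct over down-closed subsets of $\WIJ$: for a maximal element $z$ of such a set, the short exact sequence $0\to\CHIJ_{\sslt z}\to\CHIJ_{\ssleq z}\to\CHIJ_z\to 0$ (a split decomposition by the basis of \autoref{thm:hijbasis}) maps, through the corestriction of $\psi^{IJ}$, to the bottom row of~\eqref{eq:exwz}; the map on sub-objects is an isomorphism by the inductive hypothesis, the map on quotients is $f_2|_{\CHIJ_z}$ up to \autoref{thm:ost}'s isomorphism and so is an isomorphism, and the Five Lemma finishes the step; the parallel claim for $\CH_{\ssleq z}\to K^{\Gbar}(Z_{\ssleq z})$ is \cite[\S8.6]{lusztig:bases}.
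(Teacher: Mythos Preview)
Your proposal is correct and follows essentially the same route as the paper: assemble the three squares from \autoref{thm:ost}, from the cartesian square~\eqref{eq:cart}, and from restricting $\varphi$ and $\psi^{IJ}$ to the filtration pieces; then identify $f_1|_{\CH_z}$ explicitly and deduce $f_2|_{\CHIJ_z}$ from $f_2\circ\chi^{\ssleq z}=(p_z)_*\circ f_1$; finally use the Five Lemma induction on $z$ for bijectivity of $\psi_z$.

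The one place where your argument diverges from the paper is the step you yourself flag as the obstacle. The claim that $r_z^*\varphi(T_z)=\epsilon_z[\CO_{Z_z}]$ does \emph{not} come from iterating $\varphi(T_s+1)=-v\,\mathbf a_s$ along a reduced word; that identity controls $\varphi(T_s)$ but does not directly yield the restriction of $\varphi(T_z)$ to the open cell. The correct input is \cite[\S8.9]{lusztig:bases}, which produces a unique $\xi_z\in K^{\Gbar}(Z_{\ssleq z})$ with $(j_z)_*\xi_z=\varphi(T_z)$ and $r_z^*\xi_z=\epsilon_z[\BBC_{Z_z}]$. Likewise, your assertion that ``multiplication by $\varphi(\theta_\mu)$ acts on the graded piece $K^{\Gbar}(Z_z)$ as the line bundle it represents'' is true but is precisely what the paper proves by the explicit convolution computation in \autoref{thm:basis}: one uses the $K^{\Gbar}(Z_1)$-module structures $\star_w$ and $\star_w'$ of \autoref{lem:starlin}, the cartesian square relating $d_1\colon\FNt\to Z_1$ and $p_{w,1}\colon Z_w\to\FNt$, and \autoref{pro:convlin}\,\autoref{i:conv3} to get $r_z^*\varphi_z(\theta_\lambda T_z)=\epsilon_z\,q_{z,1}^*[\CL_\lambda]$. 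After that, tracking through $\res_z$ and $i_z^*$ via the commutative square with $f_z\colon L_z/B_z\hookrightarrow G/B$ gives $f_1(\theta_\lambda T_z)=\epsilon_z\,q_z^*[\CL^z_\lambda]$, exactly as you predict. So your outline is right; just replace the $\S7.25$-iteration heuristic by the citation of \cite[\S8.9]{lusztig:bases}, and be aware that the $A[X(T)]$-linearity you invoke is the content of the paper's \autoref{thm:basis} rather than a formality.
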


This theorem is proved in~\autoref{ssec:isoz1}\,--\,\autoref{ssec:isoz3}.

The isomorphisms in the left-hand square in~\eqref{eq:wideiso} may be viewed
as the restrictions of $\varphi$ and $\psi^{IJ}$ to $\CH_{\ssleq z}$ and
$\CHIJ_{\ssleq z}$, respectively in the sense that the inclusions
$\CH_{\ssleq z}\subseteq \CH$ and $\CHIJ_{\ssleq z} \subseteq \CHIJ$, and
the closed embeddings $Z_{\ssleq z} \subseteq Z$ and $\XIJ_{\ssleq z}
\subseteq \XIJ$, induce commutative diagrams
\[
\vcenter{\vbox{ \xymatrix{ \CH_{\ssleq z} \ar[r]^-{}_-{\cong} \ar[d]&
      K^{\Gbar}(Z_{\ssleq z}) \ar[d]\\
      \CH \ar[r]^-{\varphi} & K^{\Gbar}(Z)} }} \quad\text{and} \quad
\vcenter{\vbox{ \xymatrix{\CHIJ_{\ssleq z} \ar[r]^-{}_-{\cong} \ar[d] &
      K^{\Gbar}(\XIJ_{\ssleq z}) \ar[d] \\
      \CHIJ \ar[r]^-{\psi^{IJ}} & K^{\Gbar}(\XIJ) } }}
\]
where the vertical maps are injective.

Let $z_0$ be the element in $\WIJ$ with maximal length. Then $\CHIJ_{\ssleq
  z_0} = \CHIJ$ and $\XIJ_{\ssleq z_0}= \XIJ$. Thus, taking $z=z_0$ in the
theorem we have that $\psi^{IJ} \colon \CHIJ\to K^{\Gbar}(\XIJ)$ is an
isomorphism. This completes the proof of~\autoref{thm:main}.

\begin{corollary}\label{cor:linind}
  The set $\{\, \psi^{IJ}(C_{w_I} \theta_\lambda T_z C_{w_J})\mid z\in
  \WIJ,\, \lambda \in X(T)_{z} ^+ \,\}$ is an $A$-basis of
  $K^{\Gbar}(\XIJ)$.
\end{corollary}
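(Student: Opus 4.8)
The plan is to read the corollary off directly from the two main structural results proved above. First I would invoke \autoref{thm:hijbasis}, which says that $\{\, C_{w_I} \theta_\lambda T_z C_{w_J} \mid z\in \WIJ,\, \lambda \in X(T)_{z}^+ \,\}$ is an $A$-basis of $\CHIJ$. Next I would appeal to \autoref{thm:main}: the map $\psi^{IJ}\colon \CHIJ \to K^{\Gbar}(\XIJ)$ is an $R(\Gbar)$-module isomorphism, hence in particular an isomorphism of $A$-modules, since $A$ is a subring of $R(\Gbar)$ and $\psi^{IJ}$ is $R(\Gbar)$-linear. An isomorphism of modules sends a basis to a basis, so the image under $\psi^{IJ}$ of the Bernstein basis of $\CHIJ$, namely $\{\, \psi^{IJ}(C_{w_I} \theta_\lambda T_z C_{w_J}) \mid z\in \WIJ,\, \lambda \in X(T)_{z}^+ \,\}$, is an $A$-basis of $K^{\Gbar}(\XIJ)$. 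That is the entire argument; there is no essential obstacle at this stage, since all of the difficulty has already been absorbed into \autoref{thm:main} and \autoref{thm:hijbasis} (and, behind them, \autoref{thm:wideiso2}, \autoref{cor:surj}, \autoref{lem:ostlem}, and \autoref{lem:double}).

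If one preferred a proof that does not quote \autoref{thm:main} as a black box, one could instead argue by induction on $z$ in the Bruhat order on $\WIJ$ using the diagram~\eqref{eq:wideiso}. In the inductive step, the composite $f_2$ along the bottom row carries the spanning set $\{\, C_{w_I}\theta_\lambda T_z C_{w_J}\mid \lambda\in X(T)_z^+\,\}$ of $\CHIJ_z$ bijectively onto $\{\, \epsilon_z (p_z)_*q_z^*[\CL^z_\lambda]\mid \lambda\in X(T)_z^+\,\}$, which by \autoref{lem:ostlem} is an $A$-basis of $K^{\Lbar_z}(\SN_z)\cong K^{\Gbar}(\XIJ_z)$. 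Feeding this through the short exact sequences~\eqref{eq:exwz} and~\eqref{eq:exz} and using the surjectivity established in \autoref{cor:surj} shows that the union over $z\in\WIJ$ of these sets forms an $A$-basis of $K^{\Gbar}(\XIJ)$, and tracing through the diagram identifies that basis with the image of the Bernstein basis of $\CHIJ$ under $\psi^{IJ}$. Either way, the upshot is that the Bernstein basis of $\CHIJ$ and the basis of $K^{\Gbar}(\XIJ)$ correspond under $\psi^{IJ}$, which is what makes the canonical indexing of both bases by $(W_I,W_J)$-double cosets in $\Wex$ meaningful.
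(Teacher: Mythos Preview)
Your proposal is correct and matches the paper's approach exactly: the corollary is stated in the paper immediately after \autoref{thm:main} is completed (via \autoref{thm:wideiso2} with $z=z_0$) and is left without an explicit proof, because it follows at once from \autoref{thm:hijbasis} together with the fact that $\psi^{IJ}$ is an $A$-module isomorphism. Your optional inductive alternative simply unpacks the proof of \autoref{thm:wideiso2} and so is not a genuinely different route.
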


It follows from~\autoref{thm:wideiso2} that $\psi^{IJ}(\CHIJ_{\ssleq z}) =
K^{\Gbar}(\XIJ_{\ssleq z})$ for $z$ in $\WIJ$, and so the following
refinement of~\autoref{thm:main} is a consequence of the constructions
above.

\begin{corollary}\label{cor:ref}
  The $A$-submodules $K^{\Gbar}(\XIJ_{\ssleq z})$ determine a filtration of
  $K^{\Gbar}(\XIJ)$ such that if $\gr\, K^{\Gbar}(\XIJ)$ is the associated
  graded $A$-module, then there are $A$-module isomorphisms
  \[
  \gr\, K^{\Gbar}(\XIJ) \cong \bigoplus_{z\in \WIJ} \CHIJ_z \cong
  \bigoplus_{z\in \WIJ} K^{\Lbar_z} \left(\SN_{z} \right).
  \]
\end{corollary}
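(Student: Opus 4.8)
The plan is to read off \autoref{cor:ref} directly from the machinery already assembled, treating it as a bookkeeping consequence of \autoref{thm:wideiso2} together with the exactness of the sequences \eqref{eq:exz}. First I would fix the linear order $\WIJ=\{z_i\}$ extending the Bruhat order, so that $\XIJ_{\ssleq z_i}$ coincides with $\XIJ_{\ssleq i}$ and the embeddings $\XIJ_{\ssleq z}\hookrightarrow\XIJ$ induce injections on $K^{\Gbar}$; this is what makes the subspaces $K^{\Gbar}(\XIJ_{\ssleq z})$ into an honest increasing filtration of $K^{\Gbar}(\XIJ)$, with $K^{\Gbar}(\XIJ_{\ssleq z_0})=K^{\Gbar}(\XIJ)$ since $\XIJ_{\ssleq z_0}=\XIJ$. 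The associated graded piece at step $i$ is by definition $K^{\Gbar}(\XIJ_{\ssleq z_i})/K^{\Gbar}(\XIJ_{\ssleq z_{i-1}})$, and by the exact sequence \eqref{eq:exz} (proved in \autoref{ssec:ex}) the restriction map $(r^{IJ}_{z_i})^*$ identifies this quotient with $K^{\Gbar}(\XIJ_{z_i})$.

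Next I would invoke \autoref{thm:ost}, which gives $K^{\Gbar}(\XIJ_z)\cong K^{\Lbar_z}(\SN_z)$, to obtain the second isomorphism $\gr\,K^{\Gbar}(\XIJ)\cong\bigoplus_{z\in\WIJ}K^{\Lbar_z}(\SN_z)$. For the first isomorphism, the key input is \autoref{thm:wideiso2}: since $\psi^{IJ}$ restricts to an isomorphism $\CHIJ_{\ssleq z}\xrightarrow{\cong}K^{\Gbar}(\XIJ_{\ssleq z})$ compatibly with the inclusions (the commutative squares displayed just after \autoref{thm:wideiso2}), $\psi^{IJ}$ carries the filtration of $\CHIJ$ by the submodules $\CHIJ_{\ssleq z}$ onto the filtration of $K^{\Gbar}(\XIJ)$ by the $K^{\Gbar}(\XIJ_{\ssleq z})$. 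Hence $\psi^{IJ}$ induces an isomorphism on associated graded modules, and it remains to compute $\gr\,\CHIJ$. From \autoref{thm:hijbasis} and the definition of $\CHIJ_{\ssleq z}$ one has $\CHIJ_{\ssleq z}=\CHIJ_{\ssleq z'}\oplus\CHIJ_z$ whenever $z$ is the immediate successor of $z'$ in the chosen order — this is exactly the content of ``the proof of \autoref{thm:hijbasis} shows that $\{C_{w_I}\theta_\lambda T_zC_{w_J}\mid\lambda\in X(T)^+_z\}$ is an $A$-basis of $\CHIJ_z$'' — so the graded piece is canonically $\CHIJ_z$, giving $\gr\,\CHIJ\cong\bigoplus_{z\in\WIJ}\CHIJ_z$. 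Composing the three identifications yields the stated chain of isomorphisms.

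I do not anticipate a serious obstacle here; the only point requiring care is checking that the filtration on $\CHIJ$ by the $\CHIJ_{\ssleq z}$ really does have associated graded $\bigoplus_z\CHIJ_z$ as a \emph{split} filtration, i.e.\ that the $\CHIJ_z$ are complementary in successive steps rather than merely spanning the quotient. This follows from \autoref{thm:hijbasis}, which exhibits a common $A$-basis respecting the filtration, but it should be stated explicitly rather than left implicit. A second, more cosmetic, point is that \autoref{thm:wideiso2}\eqref{i:wi3} pins down the bottom composition $f_2$ precisely enough that the induced map on graded pieces agrees with the isomorphism $\CHIJ_z\cong K^{\Lbar_z}(\SN_z)$ coming from the bottom row of the diagram in \autoref{thm:ost}, so that all the identifications are mutually compatible; I would remark on this to justify that the two displayed isomorphisms in \autoref{cor:ref} fit into a single commutative diagram, even though the corollary as stated only asserts the existence of the isomorphisms.
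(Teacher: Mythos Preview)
Your proposal is correct and follows essentially the same approach as the paper: the corollary is stated there as an immediate consequence of \autoref{thm:wideiso2} and the exact sequences~\eqref{eq:exz}, with the identification $\psi^{IJ}(\CHIJ_{\ssleq z})=K^{\Gbar}(\XIJ_{\ssleq z})$ doing the work. Your write-up simply spells out in more detail what the paper leaves as ``a consequence of the constructions above,'' and your care about the splitting of the filtration on $\CHIJ$ via the explicit basis from \autoref{thm:hijbasis} is appropriate.
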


\section{The ``bar'' involution and Kazhdan-Lusztig basis of
  \texorpdfstring{$\CHIJ$}{}} \label{sec:kl}

For $z$ in $\WIJ$ and $\lambda$ in $X(T)_z^+$, let $m_{\lambda,z}$ be the
element in the double coset $W_It_\lambda zW_J$ with minimal length and
define
\[
T_{\lambda,z}=\chi^{IJ}( T_{m_{\lambda,z}} )=C_{w_I} T_{m_{\lambda,z}}
C_{w_J}
\]
in $\CHIJ$.

\begin{lemma}\label{lem:sb}
  The set $\{\, T_{\lambda,z}\mid z\in \WIJ,\ \lambda\in X(T)_z^+ \,\}$ is
  an $A$-basis of $\CHIJ$.
\end{lemma}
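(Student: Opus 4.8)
The plan is to reduce the statement to facts already available and to prove separately that the family $\{\,T_{\lambda,z}\,\}$ spans $\CHIJ$ over $A$ and that it is $A$-linearly independent; both steps are genuinely needed, since the families involved are infinite. By \autoref{lem:double} the assignment $(z,\lambda)\mapsto W_It_\lambda zW_J$ is a bijection from the indexing set $\{\,(z,\lambda)\mid z\in\WIJ,\ \lambda\in X(T)_z^+\,\}$ onto the set of $(W_I,W_J)$-double cosets in $\Wex$, and, as recalled in the final paragraph, each such double coset contains a unique element of minimal length. Hence $m_{\lambda,z}$ is exactly that minimal-length representative of the double coset of $t_\lambda z$, and $\{\,T_{\lambda,z}\,\}$ coincides with $\mathcal B:=\{\,\chi^{IJ}(T_d)=C_{w_I}T_dC_{w_J}\mid d\text{ a minimal-length }(W_I,W_J)\text{-double coset representative in }\Wex\,\}$.

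\emph{Spanning.} Since $\{\,T_w\mid w\in\Wex\,\}$ is an $A$-basis of $\CH$ and $\chi^{IJ}$ is a surjective homomorphism of $A$-modules, the elements $\chi^{IJ}(T_w)$, $w\in\Wex$, span $\CHIJ$. The input here is the standard double-coset decomposition: if $d$ is the minimal-length representative of the double coset of $w$, then $w=a\,d\,b$ for some $a\in W_I$ and $b\in W_J$ with $\ell(w)=\ell(a)+\ell(d)+\ell(b)$. Then $T_w=T_aT_dT_b$ by~\eqref{eq:std}; expanding $a$ and $b$ as reduced words in $I$ and $J$ and using $C_{w_I}T_s=-C_{w_I}$ for $s\in I$ and $T_sC_{w_J}=-C_{w_J}$ for $s\in J$ gives $C_{w_I}T_a=\epsilon_aC_{w_I}$ and $T_bC_{w_J}=\epsilon_bC_{w_J}$, so $\chi^{IJ}(T_w)=\epsilon_a\epsilon_b\,\chi^{IJ}(T_d)$. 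Thus every $\chi^{IJ}(T_w)$ equals $\pm$ one of the $T_{\lambda,z}$, and $\mathcal B$ spans $\CHIJ$.

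\emph{Linear independence.} Here I would specialize at $v=1$. Since $\CHIJ$ is a submodule of $\CH$, which is free over the integral domain $A=\BBZ[v,v\inverse]$ and hence torsion-free, it is enough to show that the images of the $T_{\lambda,z}$ under the classical specialization $\CH\to\CH/(v-1)\CH\cong\BBZ[\Wex]$ ($v\mapsto1$, $T_w\mapsto w$) are linearly independent over $\BBZ$: one may then clear the largest common power of $v-1$ from a putative $A$-linear dependence and reduce modulo $v-1$ to obtain a contradiction. Under this specialization $C_{w_I}\mapsto\epsilon_{w_I}\epsilon_I$ and $C_{w_J}\mapsto\epsilon_{w_J}\epsilon_J$, directly from the formula $C_{w_I}=(-v)^{\ell(w_I)}\sum_{y\in W_I}\epsilon_yv_y^{-2}T_y$; hence $T_{\lambda,z}=C_{w_I}T_{m_{\lambda,z}}C_{w_J}$ maps to $\epsilon_{w_I}\epsilon_{w_J}\,\epsilon_Im_{\lambda,z}\epsilon_J$, which equals $\pm\,\epsilon_It_\lambda z\epsilon_J$ because $m_{\lambda,z}$ and $t_\lambda z$ lie in the same $(W_I,W_J)$-double coset. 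Since $\{\,\epsilon_It_\lambda z\epsilon_J\mid z\in\WIJ,\ \lambda\in X(T)_z^+\,\}$ is a $\BBZ$-basis of $\epsilon_I\BBZ[\Wex]\epsilon_J$, the images of the $T_{\lambda,z}$ are $\BBZ$-linearly independent, as required. Together with the spanning statement this shows $\mathcal B=\{\,T_{\lambda,z}\,\}$ is an $A$-basis of $\CHIJ$.

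Finally, the two facts about double cosets used above---that each $(W_I,W_J)$-double coset in $\Wex$ has a unique element of minimal length, and the length-additive factorization $w=adb$ with $a\in W_I$, $b\in W_J$---are classical for Coxeter groups, and I would transfer them to $\Wex=\Waf\Gamma$ by writing $w=y\gamma$ with $y\in\Waf$, $\gamma\in\Gamma$: since $\gamma$ acts on $\Waf$ by automorphisms preserving $S_{\operatorname{af}}$, the subgroup $W_{\gamma(J)}=\gamma W_J\gamma\inverse$ is a standard parabolic subgroup of the Coxeter group $\Waf$, as is $W_I$, and $W_IwW_J=(W_IyW_{\gamma(J)})\gamma$; applying the classical statements to $W_IyW_{\gamma(J)}$ in $\Waf$ and using that right multiplication by $\gamma$ preserves length, together with $\gamma\inverse W_{\gamma(J)}\gamma=W_J$, gives the corresponding statements for $W_IwW_J$ in $\Wex$. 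This passage to the Coxeter group $\Waf$ is the only non-formal point; everything else is bookkeeping with the standard basis and with the identities $C_{w_I}T_s=-C_{w_I}$ and $T_sC_{w_J}=-C_{w_J}$.
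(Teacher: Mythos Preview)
Your proof is correct and follows essentially the same route as the paper: spanning via the length-additive factorization $w=a\,d\,b$ with $d$ minimal in its $(W_I,W_J)$-double coset, together with $C_{w_I}T_s=-C_{w_I}$ and $T_sC_{w_J}=-C_{w_J}$; and linear independence via the specialization $v\mapsto 1$ to $\epsilon_I\BBZ[\Wex]\epsilon_J$, where~\autoref{lem:double} gives a $\BBZ$-basis indexed by the same set. Your treatment is in fact more explicit than the paper's in two respects---you spell out why torsion-freeness of $\CH$ makes the $v=1$ specialization sufficient, and you justify the transfer of the standard parabolic double-coset lemmas from the Coxeter group $\Waf$ to $\Wex=\Waf\Gamma$---both of which the paper leaves implicit.
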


\begin{proof}
  If $x$ is in $W_It_\lambda zW_J$, then there are elements $w_1$ in $W_I$
  and $w_2$ in $W_J$ so that $x=w_1m_{\lambda,z} w_2$ and $\ell(x)=
  \ell(w_1)+ \ell(m_{\lambda,z}) +\ell(w_2)$. Thus, it follows
  from~\eqref{eq:cxts} that $C_{w_I} T_xC_{w_J}= \pm C_{w_I}
  T_{m_{\lambda,z}} C_{w_J}$. Therefore $\{\, T_{\lambda,z}\mid z\in \WIJ,\
  \lambda\in X(T)_z^+ \,\}$ spans $\CHIJ$. By~\autoref{lem:double} $\{\,
  \epsilon_I m_{\lambda,z} \epsilon_J \mid z\in \WIJ,\ \lambda\in X(T)_z^+
  \,\}$ is a $\BBZ$-basis of $\epsilon_I \BBZ[\Wex] \epsilon_J$. It follows
  that $\{\, T_{ \lambda,z}\mid z\in \WIJ,\ \lambda\in X(T)_z^+ \,\}$ is
  linearly independent. \qed
\end{proof}

We call $\{\, T_{\lambda,z}\mid z\in \WIJ,\ \lambda\in X(T)_z^+ \,\}$ the
\emph{standard basis} of $\CHIJ$. It follows from \eqref{eq:cx} that the
ring involution $\overline{\phantom {x} }\colon \CH \to \CH$ induces an
involution of abelian groups, also denoted by $\overline{\phantom {x} }$,
from $\CHIJ$ to $\CHIJ$ that we call the \emph{bar involution} of $\CHIJ$.

Now consider the Kazhdan-Lusztig basis $\{\, C_x'\mid x\in \Wex\,\}$ of
$\CH$. If $t$ is in $I$ and $x$ is in $\Wex$ with $tx<x$, then
by~\eqref{eq:cxts}
\[
C_{w_I} C_x'C_{w_J}= -C_{w_I}T_t C_x'C_{w_J}= -v^2 C_{w_I} C_x'C_{w_J},
\]
and so $C_{w_I} C_x'C_{w_J}=0$. Similarly, if $s$ is in $J$ and $x$ is in
$\Wex$ with $xs<x$, then $C_{w_I} C_x'C_{w_J}=0$. It follows that $C_{w_I}
C_x'C_{w_J}=0$ unless $x=m_{\lambda,z}$ for some $\lambda, z$. For $z$ in
$\WIJ$ and $\lambda$ in $X(T)_z^+$, define
\[
C_{\lambda,z}'=\chi^{IJ}(C_{m_{\lambda,z}}') =C_{w_I} C_{m_{\lambda,z}}'
C_{w_J}
\]
in $\CHIJ$. As usual, an element $h$ in $\CH$ or $\CHIJ$ is said to be
\emph{self-dual} if $\overline{ h} =h$.

\begin{proposition}
  The set $\{\, C_{\lambda, z}'\mid z\in \WIJ,\ \lambda\in X(T)_z^+\,\}$ is
  a self-dual basis of $\CHIJ$ and the map $\chi^{IJ}\colon \CH\to \CHIJ$
  may be identified with projection of $\CH$ onto the span of $\{\,
  C_{m_{\lambda, z}}'\mid z\in \WIJ,\ \lambda\in X(T)_z^+\,\}$, that is,
  \[
  \chi^{IJ}(C_x')=\begin{cases} C_{\lambda,z}'& \text{if $x=m_{\lambda,z}$
      for some $z\in \WIJ$, $\lambda\in X(T)_z^+$,} \\ 0&\text{otherwise.}
  \end{cases}
  \]
\end{proposition}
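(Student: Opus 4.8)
The plan is to deduce the statement from the two facts just established before it: first, that $\chi^{IJ}(C_x')=0$ unless $x=m_{\lambda,z}$ for some $z\in\WIJ$ and $\lambda\in X(T)_z^+$ (proved in the paragraph immediately preceding the proposition using \eqref{eq:cxts}), and second, that $\chi^{IJ}$ is a surjective $R(\Gbar)$-module homomorphism. This immediately gives the displayed formula for $\chi^{IJ}(C_x')$ by definition of $C_{\lambda,z}'$, so the real content is that $\{\,C_{\lambda,z}'\mid z\in\WIJ,\ \lambda\in X(T)_z^+\,\}$ is a basis and that each $C_{\lambda,z}'$ is self-dual.

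First I would prove self-duality. Since $\overline{C_{m_{\lambda,z}}'}=C_{m_{\lambda,z}}'$ by \eqref{eq:cx'}, and since $C_{w_I}$ and $C_{w_J}$ are self-dual (being Kazhdan-Lusztig basis elements, $\overline{C_{w_I}}=C_{w_I}$ and likewise for $w_J$), and since $\overline{\phantom{x}}$ is a ring automorphism of $\CH$, we get
\[
\overline{C_{\lambda,z}'}=\overline{C_{w_I}}\,\overline{C_{m_{\lambda,z}}'}\,\overline{C_{w_J}}=C_{w_I}C_{m_{\lambda,z}}'C_{w_J}=C_{\lambda,z}',
\]
so each $C_{\lambda,z}'$ is self-dual. (This also recovers, via \eqref{eq:cx}, the fact stated just before the proposition that $\overline{\phantom{x}}$ restricts to a well-defined involution of $\CHIJ$.)

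Next I would prove the spanning and linear independence. Spanning: the Kazhdan-Lusztig basis $\{\,C_x'\mid x\in\Wex\,\}$ is an $A$-basis of $\CH$, so its image under the surjection $\chi^{IJ}$ spans $\CHIJ$; by the vanishing observation this image is $\{0\}\cup\{\,C_{\lambda,z}'\mid z\in\WIJ,\ \lambda\in X(T)_z^+\,\}$, so the $C_{\lambda,z}'$ span $\CHIJ$. Linear independence: by~\autoref{lem:sb} the standard basis $\{\,T_{\lambda,z}\mid z\in\WIJ,\ \lambda\in X(T)_z^+\,\}$ is an $A$-basis of $\CHIJ$, which is therefore free over $A$ of this rank, so it suffices to show the $C_{\lambda,z}'$ are linearly independent over $A$. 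Here I would use the triangularity of \eqref{eq:cx'}: writing $C_{m_{\lambda,z}}'=v_{m_{\lambda,z}}^{-1}T_{m_{\lambda,z}}+\sum_{y<m_{\lambda,z}}v_{m_{\lambda,z}}^{-1}P_{y,m_{\lambda,z}}T_y$ and applying $\chi^{IJ}$, each term $\chi^{IJ}(T_y)$ with $y<m_{\lambda,z}$ is, by~\eqref{eq:cxts} and the argument in the proof of~\autoref{lem:sb}, a scalar multiple of some $T_{\mu,z'}$ with $(\mu,z')$ strictly below $(\lambda,z)$ in the partial order inherited from the Bruhat order on $\Wex$ (via $(\lambda,z)\mapsto m_{\lambda,z}$). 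Hence the transition matrix from $\{\,C_{\lambda,z}'\,\}$ to $\{\,T_{\lambda,z}\,\}$ is unitriangular (up to the invertible scalars $v_{m_{\lambda,z}}^{-1}$) with respect to this order, so it is invertible over $A$ and the $C_{\lambda,z}'$ form an $A$-basis. The main obstacle is this last bookkeeping step: one must check that applying $\chi^{IJ}$ to $T_y$ for $y<m_{\lambda,z}$ genuinely produces only standard basis elements $T_{\mu,z'}$ indexed by pairs strictly smaller than $(\lambda,z)$ — this requires combining the reduction $C_{w_I}T_xC_{w_J}=\pm C_{w_I}T_{m_{\mu,z'}}C_{w_J}$ from~\autoref{lem:sb} with the compatibility of Bruhat order and double cosets cited from~\cite[Lemma 2.2]{douglass:inversion}, to ensure that passing from $y$ to its minimal double-coset representative does not escape the set of pairs below $(\lambda,z)$.
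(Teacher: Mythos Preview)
Your proposal is correct and follows essentially the same approach as the paper: the heart of both arguments is the triangularity relation obtained by applying $\chi^{IJ}$ to~\eqref{eq:cx'} and using \cite[Lemma 2.2]{douglass:inversion} to control how passing to minimal double-coset representatives interacts with Bruhat order. The paper packages spanning and independence together via this single triangular system (and leaves self-duality implicit), while you separate spanning (via surjectivity of $\chi^{IJ}$) from independence and spell out self-duality explicitly; these are cosmetic differences only.
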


\begin{proof}
  We need to show that $\{\, C_{\lambda, z}'\mid z\in \WIJ,\ \lambda\in
  X(T)_z^+\,\}$ is a basis of $\CHIJ$. Suppose $x$ is in $\Wex$, then
  applying $\chi^{IJ}$ to both sides of the formula for $C_x'$ given
  in~\eqref{eq:cx'} we obtain
  \[
  C_{w_I}C_x' C_{w_J}=\sum_{y\leq x} v_x^{-1} P_{y,x} C_{w_I}T_y C_{w_J}.
  \]
  If the left-hand side is non-zero, then it is $\chi^{IJ}( C_{m_{
      \lambda,z}}') = C_{\lambda,z}'$ for some $z$ and $\lambda$. We've
  observed that $C_{w_I}T_y C_{w_J}= \pm C_{w_I} T_{\lambda', z'} C_{w_J}$
  when $y$ is in $W_Im_{\lambda', z'}W_J$. Moreover, it follows from the
  definition of the Bruhat order on $\Wex$ and~\cite[Lemma 2.2]
  {douglass:inversion} that if $y<x$ and $y$ is in $W_Im_{\lambda', z'}W_J$,
  then $m_{\lambda', z'}< m_{\lambda, z}$. Therefore,
  \begin{equation}
    \label{eq:mob}
    C_{\lambda, z}'= v_{m_{\lambda,z}}\inverse T_{\lambda,z} +
    \sum_{m_{\lambda',z'}< m_{\lambda, z} } p_{\lambda', z'} T_{\lambda', z'}
  \end{equation}
  for some polynomials $p_{\lambda', z'}$ in $A$. It follows that the system
  of equations~\eqref{eq:mob} indexed by pairs $(z, \lambda)$ can be solved
  for $\{\, T_{\lambda,z} \mid z\in\WIJ,\ \lambda\in X(T)^+_z\,\}$ and so
  $\{\, C_{\lambda, z}'\mid z\in \WIJ,\ \lambda\in X(T)_z^+\,\}$ is a basis
  of $\CHIJ$. \qed
\end{proof}

We call $\{\, C_{\lambda, z}'\mid z\in \WIJ,\ \lambda\in X(T)_z^+\,\}$ the
\emph{Kazhdan-Lusztig basis} of $\CHIJ$. The elements of the Kazhdan-Lusztig
basis have the standard uniqueness property, analogous to~\eqref{eq:cx'}.
The proof of~\cite[Theorem 1.1] {kazhdanlusztig:coxeter} is easily adapted
to prove the following (see~\cite[Claim 2.4]{soergel:kazhdan}).

\begin{proposition}
  For $z$ in $\WIJ$ and $\lambda$ in $X(T)_z^+$, $C_{\lambda,z}'$ is the
  unique element in $\CHIJ$ such that
  \begin{equation*}
    \label{eq:cxij'}
    \begin{cases}
      \overline{C_{\lambda,z}'}= C_{\lambda,z}'\quad \text{and} \\
      C_{\lambda,z}'=v_{m_{\lambda,z}}\inverse T_{\lambda,z}+
      \sum_{m_{\lambda',z'} < m_{\lambda,z}} v_{m_{\lambda,z}} \inverse
      P_{(\lambda', z'),(\lambda,z)}T_{\lambda',z'},
    \end{cases}
  \end{equation*}
  where $P_{(\lambda', z'),(\lambda,z)}$ is a polynomial in $v$ of degree
  at most $\ell(m_{\lambda,z})-\ell(m_{\lambda',z'}) -1$.
\end{proposition}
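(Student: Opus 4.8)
The plan is to mirror the classical Kazhdan--Lusztig uniqueness argument of \cite[Theorem 1.1]{kazhdanlusztig:coxeter}, transported to the filtered setting of $\CHIJ$, exactly as Soergel does in \cite[Claim 2.4]{soergel:kazhdan}. First I would establish \emph{uniqueness}: suppose $h\in\CHIJ$ satisfies $\overline h=h$ and $h=v_{m_{\lambda,z}}\inverse T_{\lambda,z}+\sum_{m_{\lambda',z'}<m_{\lambda,z}} a_{(\lambda',z'),(\lambda,z)}\,T_{\lambda',z'}$ with each $a_{(\lambda',z'),(\lambda,z)}\in v\inverse\BBZ[v\inverse]$ (the degree bound $\le\ell(m_{\lambda,z})-\ell(m_{\lambda',z'})-1$ is equivalent to this, given that $T_{\lambda',z'}$ carries a factor of $v_{m_{\lambda',z'}}\inverse$ — I would normalize so the statement about $P_{(\lambda',z'),(\lambda,z)}$ matches the display). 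Then $d:=h-C_{\lambda,z}'$ is self-dual, lies in $\CHIJ$, has zero $T_{\lambda,z}$-coefficient, and expands over $T_{\lambda',z'}$ with $m_{\lambda',z'}<m_{\lambda,z}$. Using that $\{\,T_{\lambda',z'}\mid z'\in\WIJ,\ \lambda'\in X(T)^+_{z'}\,\}$ is an $A$-basis of $\CHIJ$ (\autoref{lem:sb}) and that the bar involution sends $T_{\lambda',z'}$ to $T_{\lambda',z'}$ plus an $A$-combination of $T_{\lambda'',z''}$ with $m_{\lambda'',z''}<m_{\lambda',z'}$ — which follows from the corresponding statement in $\CH$ by applying $\chi^{IJ}$ and \cite[Lemma 2.2]{douglass:inversion} to control the Bruhat order — a downward induction on $m_{\lambda',z'}$ forces every coefficient of $d$ to be both in $v\inverse\BBZ[v\inverse]$ and bar-invariant, hence $0$. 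This gives $h=C_{\lambda,z}'$.

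Next, for \emph{existence} it suffices to observe that $C_{\lambda,z}'=\chi^{IJ}(C_{m_{\lambda,z}}')$ already has the required form. Self-duality is immediate since $\overline{C_{m_{\lambda,z}}'}=C_{m_{\lambda,z}}'$ and $\chi^{IJ}$ intertwines the bar involutions of $\CH$ and $\CHIJ$ (this is the remark following \autoref{lem:sb}, using that $\overline{C_{w_I}}=C_{w_I}$ and $\overline{C_{w_J}}=C_{w_J}$, so $\overline{C_{w_I}hC_{w_J}}=C_{w_I}\overline h C_{w_J}$). The triangularity display \eqref{eq:mob}, already derived in the proof of the preceding proposition, shows $C_{\lambda,z}'=v_{m_{\lambda,z}}\inverse T_{\lambda,z}+\sum_{m_{\lambda',z'}<m_{\lambda,z}}p_{\lambda',z'}T_{\lambda',z'}$; what remains is to identify $p_{\lambda',z'}$ with $v_{m_{\lambda,z}}\inverse P_{(\lambda',z'),(\lambda,z)}$ and to check the degree bound on $P_{(\lambda',z'),(\lambda,z)}$. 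For this I would apply $\chi^{IJ}$ directly to \eqref{eq:cx'}: $\chi^{IJ}(C_{m_{\lambda,z}}')=v_{m_{\lambda,z}}\inverse\sum_{y\le m_{\lambda,z}}P_{y,m_{\lambda,z}}\,C_{w_I}T_yC_{w_J}$, collect the terms with $y\in W_Im_{\lambda',z'}W_J$ using $C_{w_I}T_yC_{w_J}=\epsilon_{w_1}\epsilon_{w_2}T_{\lambda',z'}$ for $y=w_1m_{\lambda',z'}w_2$, and define $P_{(\lambda',z'),(\lambda,z)}$ to be the resulting (signed) sum $\sum_{w_1\in W_I,w_2\in W_J}\epsilon_{w_1}\epsilon_{w_2}P_{w_1m_{\lambda',z'}w_2,\,m_{\lambda,z}}$. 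The degree bound $\deg P_{y,m_{\lambda,z}}\le\ell(m_{\lambda,z})-\ell(y)-1$ together with $\ell(w_1m_{\lambda',z'}w_2)=\ell(w_1)+\ell(m_{\lambda',z'})+\ell(w_2)\ge\ell(m_{\lambda',z'})$ yields $\deg P_{(\lambda',z'),(\lambda,z)}\le\ell(m_{\lambda,z})-\ell(m_{\lambda',z'})-1$.

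The main obstacle is the bookkeeping in the uniqueness step: I need the bar involution on $\CHIJ$, expressed in the standard basis $\{T_{\lambda',z'}\}$, to be unitriangular with respect to a partial order for which $\{(\lambda',z')\}$ is well-behaved — concretely, that $\overline{T_{\lambda,z}}-T_{\lambda,z}$ is an $A$-combination of $T_{\lambda',z'}$ with $m_{\lambda',z'}<m_{\lambda,z}$. This is not entirely formal because $\chi^{IJ}$ is only a surjection, so one cannot simply transport the unitriangularity of the bar involution on $\CH$; one must check that the Bruhat-order comparison survives the projection. The key input is \cite[Lemma 2.2]{douglass:inversion}, which relates $\le$ on double cosets to $\le$ on their minimal representatives in $\Wex$, so that $m_{\lambda',z'}<m_{\lambda,z}$ precisely when the double coset $W_It_{\lambda'}z'W_J$ is dominated by $W_It_\lambda zW_J$. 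Once this is in place the induction is routine. I would also note, as the proposition's closing sentence does, that the whole argument is the verbatim analog of \cite[Theorem 1.1]{kazhdanlusztig:coxeter} and that the reader may consult \cite[Claim 2.4]{soergel:kazhdan} for the parabolic prototype, so a detailed write-up of the induction can be safely compressed.
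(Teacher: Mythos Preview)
Your proposal is correct and matches the paper's approach exactly: the paper gives no argument beyond the sentence ``The proof of \cite[Theorem 1.1]{kazhdanlusztig:coxeter} is easily adapted to prove the following (see \cite[Claim 2.4]{soergel:kazhdan}),'' and you have supplied precisely that adaptation, citing the same two sources and correctly isolating the one nontrivial point (unitriangularity of the bar involution in the standard basis, handled via \cite[Lemma 2.2]{douglass:inversion}). One small bookkeeping remark: your formula $P_{(\lambda',z'),(\lambda,z)}=\sum_{w_1\in W_I,\,w_2\in W_J}\epsilon_{w_1}\epsilon_{w_2}P_{w_1m_{\lambda',z'}w_2,\,m_{\lambda,z}}$ overcounts when the decomposition $y=w_1m_{\lambda',z'}w_2$ is not unique---better to sum over $y\in W_Im_{\lambda',z'}W_J$ with $y\le m_{\lambda,z}$ and use the well-defined sign $\epsilon_y\epsilon_{m_{\lambda',z'}}$---but this does not affect the degree bound or the argument.
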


To summarize, we have the standard basis 
\[
\{\, T_{\lambda,z}\mid z\in \WIJ,\ \lambda\in X(T)_z^+ \,\} = \{\,
\chi^{IJ}(T_{m_{\lambda,z}})\mid z\in \WIJ,\ \lambda\in X(T)_z^+ \,\},
\]
the Bernstein basis
\[
\{\, C_{w_I} \theta_\lambda T_z C_{w_J}\mid z\in \WIJ,\, \lambda \in
X(T)_{z} ^+ \,\} = \{\, \chi^{IJ}(\theta_\lambda T_z)\mid z\in \WIJ,\,
\lambda \in X(T)_{z} ^+ \,\},
\]
and the Kazhdan-Lusztig basis
\[
\{\, C_{\lambda, z}'\mid z\in \WIJ,\ \lambda\in X(T)_z^+\,\} = \{\,
\chi^{IJ}(C_{m_{\lambda, z}}') \mid z\in \WIJ,\ \lambda\in X(T)_z^+\,\}
\]
of $\CHIJ$. Applying $\psi^{IJ}$ to each of these bases we obtain standard,
Bernstein, and Kazhdan-Lusztig bases of $K^{\Gbar}(\XIJ)$.

\section{Intersection/Tor products} \label{sec:kthy}

In this section we formalize the intersection/Tor product
constructions described in~\cite[\S6.7]{lusztig:bases} in the form
they are used in this paper.

\subsection{The basic construction} 

Suppose that $V$, $A$, $B$, and $C$ are $H$-varieties such that $V$ is
smooth. Let $V_1$ and $V_2$ be closed, $H$-stable subvarieties of $V$
and let $p_{1}\colon V_1\to A$, $p_{2}\colon V_2\to B$, and
$p_{12}\colon V_1\cap V_2\to C$ be $H$-equivariant maps such that
$p_1$ and $p_2$ are smooth and $p_{12}$ is proper. These spaces and
maps can be assembled in the commutative diagram
\begin{equation}
  \label{eq:tor} 
  \vcenter{\vbox{
      {\tiny
        \xymatrix{ C&& B \\
          & V_1\cap V_2\ar[ul]_{p_{12}} \ar[r]\ar[d] &V_2 \ar[u]^{p_{2}}
          \ar[d]  \\ 
          A  &V_1 \ar[l]_{p_{1}} \ar[r]&V,} 
      } 
    }}
\end{equation}
where the unlabeled maps are inclusions.

The intersection/Tor-product construction described by Lusztig
\cite[\S6.4]{lusztig:bases} can be used to define an $R(H)$-bilinear map
$\star\colon K^{H}(A) \times K^H(B) \to K^H(C)$ as in
\cite[\S7]{lusztig:bases}. Precisely, for $\CF$ and $\CG$ in $K^H(A)$ and
$K^H(B)$, respectively, define
\begin{equation*}
  \label{eq:star} 
  \CF \star \CG = (p_{12})_* \left(p_{1}^*\,\CF \otimes_{V}^L p_{2}^*\,
    \CG\right) \in K^{H}(C) ,
\end{equation*}
where the Tor-product $\otimes_{V}^L$ is with respect to the smooth
variety $V$ and its closed subvarieties $V_1$ and $V_2$.

Next suppose that $V'$, $A'$, $B'$, $C'$, $V_1'$, $V_2'$, $p_{1}'$,
$p_{2}'$, and $p_{12}'$ are as in~\eqref{eq:tor}. Then we have a commutative
diagram analogous to~\eqref{eq:tor} that we connote
by~$\text{\eqref{eq:tor}}'$, and an $R(H)$-bilinear map $\star'\colon
K^{H}(A') \times K^H(B') \to K^H(C')$.

By a \emph{morphism} from~\eqref{eq:tor} to~$\text{\eqref{eq:tor}}'$ we mean a
tuple $(\mu,f,g,h)$ of $H$-equivariant morphisms where $\mu\colon V\to V'$,
$f\colon A\to A'$, $g\colon B\to B'$, and $h\colon C\to C'$ satisfy the
following conditions.
\begin{itemize}
\item $\mu$ is smooth, $V_1\subseteq \mu\inverse(V_1')$, and $V_2\subseteq
  \mu\inverse(V_2')$.

  \noindent Let $\mu_1$, $\mu_2$, and $\mu_{12}$ denote the restrictions of
  $\mu$ to maps $V_1\to V_1'$, $V_2\to V_2'$, and $V_1\cap V_2\to V_1' \cap
  V_2'$, respectively.

\item $fp_1= p_1'\mu_1$, $gp_2= p_2'\mu_2$, and $hp_{12}=
  p_{12}'\mu_{12}$. 
\end{itemize}
Then the following diagram commutes:
\begin{equation}\label{eq:dmap} {\tiny
\xymatrix@1@!@R=0em@C=0em{%
&&&& B \ar[dr]^{g}&\\
C\ar[dr]_{h}&&&&&B'\\
&C'&V_1\cap V_2 \ar[ull]_{p_{12}} \ar[rr] \ar[dr]^{\mu_{12}}
\ar[dd]|(.25)\hole &&V_2 \ar[uu]^{p_{2}} \ar[dr]_{}^{\mu_2}
\ar'[d][dd]^{}&\\ 
&&&V_1'\cap V_2' \ar[ull]^(.7){p_{12}'} \ar[dd]
\ar[rr]&&V_2' \ar[uu]_{p_{2}'} \ar[dd]^(.35){}\\
A\ar[dr]_{f}&&V_1 \ar[ll]_{p_{1}} %
\ar[rr]_(.35){}|!{[r];[r]}\hole\ar[dr]^{\mu_1}&&V \ar[dr]^{\mu}&\\
&A'&&V_1'\ar[ll]^{p_{1}'} \ar[rr]^{}&&V' .
} }
\end{equation}
 
If $f$ is flat, $g$ and $h$ are proper, $V_1= \mu\inverse(V_1')$, and the
diagram
\[ 
\small
\xymatrix{ B\ar[r]^{g} & B' \\
  V_2 \ar[u]^{p_{2}} \ar[r]^{\mu_2} &V_2' \ar[u]_{p_2'}}
\]
is cartesian, then $(\mu, f,g,h)$ is a \emph{proper} morphism. If $f$, $g$,
and $h$ are smooth, $V_1= \mu\inverse(V_1')$, $V_2= \mu\inverse(V_2')$, and
the diagram
\[
\small
\xymatrix@1{C \ar[r]^-{h} & C' \\
  V_1\cap V_2 \ar[r]^-{\mu_{12}} \ar[u]^-{p_{12}} &V_1'\cap V_2'
  \ar[u]^-{p_{12}'} }
\]
is cartesian, then $(\mu, f,g,h)$ is a \emph{smooth} morphism.

The next proposition is a restatement of \cite[\S6.5, 6.6, and
6.7]{lusztig:bases}.

\begin{proposition}\label{pro:convlin}
  Suppose $\CF'$ is in $K^H(A')$, $\CG'$ is in $K^H(B')$, $\CF$ is in
  $K^H(A)$, and $\CG$ is in $K^H(B)$.
  \begin{enumerate}
  \item If $(\mu, f,g,h)$ is proper, then $h_* \left( f^* \CF' \star \CG
    \right) =\CF'\star' g_*\,\CG $ in $K^H(C')$. \label{i:conv1}
  \item If $(\mu, f,g,h)$ is smooth, then $h^* \left( \CF' \star' \CG'
    \right) =f^*(\CF')\star g^*(\CG') $ in $K^H(C)$. \label{i:conv2}
  \item If $B=V_2=V$, $p_2=\id$, and $\BBC_V$ is the trivial line bundle on
    $V$, then $\CF \star [\BBC_V]= (p_{12})_* p_1^* (\CF) $ in
    $K^{H}(C)$. \label{i:conv3}
  \end{enumerate}
\end{proposition}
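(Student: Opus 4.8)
The plan is to derive all three statements from the functoriality of the Tor product $\otimes_{V}^L$ established in \cite[\S6.5--6.7]{lusztig:bases}, by unwinding the definition $\CF\star\CG=(p_{12})_*\bigl(p_1^*\CF\otimes_{V}^L p_2^*\CG\bigr)$ and checking that the hypotheses packaged in a \emph{proper} (resp.\ \emph{smooth}) morphism of diagrams are exactly the hypotheses needed to apply base change and the projection formula. Since the proposition is a reformulation of Lusztig's results, the work is entirely in matching up the data; I would make the dictionary explicit and then cite the relevant verifications rather than redo them.

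I would treat part (3) first, as it is essentially a tautology: when $V_2=V$ and $p_2=\id$ the second closed subvariety is the whole ambient smooth variety, so $V_1\cap V_2=V_1$ and $p_1^*\CF\otimes_{V}^L[\BBC_V]$ — the derived tensor product over $V$ with respect to the closed subvarieties $V_1$ and $V_2=V$ — is just $p_1^*\CF$ in $K^H(V_1)$, since deriving against the structure sheaf of the ambient variety does nothing. Applying $(p_{12})_*$ gives $(p_{12})_*p_1^*\CF$, which is the assertion.

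For part (1), I would expand $h_*\bigl(f^*\CF'\star\CG\bigr)$ using $fp_1=p_1'\mu_1$ and $hp_{12}=p_{12}'\mu_{12}$ to get $(p_{12}')_*(\mu_{12})_*\bigl(\mu_1^*(p_1')^*\CF'\otimes_{V}^L p_2^*\CG\bigr)$. The hypothesis $V_1=\mu^{-1}(V_1')$ exhibits $\mu_1$ as a base change of the smooth map $\mu$, so the projection-formula/base-change statement for the Tor product in \cite[\S6.6]{lusztig:bases} converts $(\mu_{12})_*$ of the Tor product into $(p_1')^*\CF'\otimes_{V'}^L(\mu_2)_*p_2^*\CG$; and since $g$ is proper, $p_2'$ is smooth (hence flat), and the square $p_2'\mu_2=gp_2$ is cartesian, base change in equivariant $K$-theory gives $(\mu_2)_*p_2^*\CG=(p_2')^*g_*\CG$. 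Substituting and applying $(p_{12}')_*$ yields $\CF'\star' g_*\CG$. Part (2) is dual: I would expand $f^*\CF'\star g^*\CG'$ as $(p_{12})_*\bigl(\mu_1^*(p_1')^*\CF'\otimes_{V}^L\mu_2^*(p_2')^*\CG'\bigr)$, use $V_i=\mu^{-1}(V_i')$ to commute the Tor product past $\mu$ via the smooth base change of \cite[\S6.7]{lusztig:bases}, obtaining $\mu_{12}^*\bigl((p_1')^*\CF'\otimes_{V'}^L(p_2')^*\CG'\bigr)$, and then, since $p_{12}'$ is proper, $h$ is smooth, and the square $p_{12}'\mu_{12}=hp_{12}$ is cartesian, apply base change $(p_{12})_*\mu_{12}^*=h^*(p_{12}')_*$ to arrive at $h^*(\CF'\star'\CG')$.

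I expect the only real obstacle to be bookkeeping rather than mathematics: at each stage one must know that the relevant Tor-independence holds so that naive base change and the projection formula for $\otimes^L$ are valid. This is precisely where the conditions $V_1=\mu^{-1}(V_1')$ (and, in the smooth case, $V_2=\mu^{-1}(V_2')$), the smoothness/flatness of $p_1,p_1',p_2'$, and the cartesian squares enter, and it is exactly what is checked in \cite[\S6.5--6.7]{lusztig:bases}; so the write-up can invoke those verifications.
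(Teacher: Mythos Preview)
Your proposal is correct and matches the paper's approach: the paper gives no proof at all, stating only that the proposition ``is a restatement of \cite[\S6.5, 6.6, and 6.7]{lusztig:bases},'' so your plan to unwind the definitions and cite those sections for the Tor-product base change and projection formula is exactly in line with (and more detailed than) what the paper does.
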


\subsection{Applications} 

In this subsection we apply the product constructions above in the
setting of Steinberg varieties.

As a first application we consider the $K^{\Gbar}(Z)$-module structure on
$K^{\Gbar}(\XEJ)$ used in the construction of $\psi^{IJ}$. If $Y$ is a
subvariety of $Z$, let
\[
\Ytilde=\{\, ((x, gB), (x, hB))\mid (x, gB, hB)\in Y\,\}
\]
denote the image of $Y$ in $\FNt\times \FNt$ under the natural embedding
$Z\to \FNt\times \FNt$.

As in~\cite[\S7.9]{lusztig:bases}, it is easy to see that taking $V=\FNt^3$,
$V_1= \Ztilde\times \FNt$, $V_2= \FNt \times \Ztilde$, and using the obvious
projections $V_1\to Z$ and $V_2\to Z$ in~\eqref{eq:tor}, we obtain a
``convolution product''
\[
\star\colon K^{\Gbar}(Z) \times K^{\Gbar}(Z) \to K^{\Gbar}(Z)
\]
that defines an $R(\Gbar)$-algebra structure on $K^{\Gbar}(Z)$.

Define $\FNt^J=\{\, (x, gP_J)\in \FN \times G/P_J \mid g\inverse x\in
\fp_J\,\}$ and
\[
\Xtilde^{\emptyset J} =\{\, ((x, gB), (x, hP_J))\in \FNt \times (\fg \times
G/P_J)\mid (x, gB, hB)\in \XEJ\,\}.
\]
Then taking $V= \FNt^2 \times (\fg\times G/P_J)$, $V_1= \Ztilde\times (\fg
\times G/P_J)$, $V_2= \FNt \times \Xtilde^{\emptyset J}$, $p_{1}$ to be the
composition $V_1\to \Ztilde \to Z$, $p_{2}$ to be the composition $V_2\to
\Xtilde^{\emptyset J} \to \XEJ$, and defining $p_{12}\colon V_1\cap V_2\to
\XEJ$ by $p_{12} ((x,gB), (x,hB), (x,kP_J))= (x, gB, kP_J)$
in~\eqref{eq:tor}, we obtain an $R(\Gbar)$-module linear map
\[
\star_J\colon K^{\Gbar}(Z) \times K^{\Gbar}(\XEJ) \to K^{\Gbar}(\XEJ)
\]
that defines a $K^{\Gbar}(Z)$-module structure on $K^{\Gbar}(\XEJ)$.

Define $\eta\colon \FNt^3\to \FNt^2\times (\fg\times G/P_J)$ by 
\[
\eta((x, gB), (y, hB), (z, kB)) = ((x, gB), (y, hB), (z, kP_J)).
\]
Then as in diagram~\eqref{eq:dmap}, the tuple $(\eta, \id, \etaEJ, \etaEJ)$
defines a proper morphism from the diagram defining the multiplication on
$K^{\Gbar}(Z)$ to the diagram defining the $K^{\Gbar}(Z)$-module structure
on $K^{\Gbar}(\XEJ)$. The next proposition follows
from~\autoref{pro:convlin}\,\autoref{i:conv1}.

\begin{proposition}\label{pro:xlin}
  Suppose $J$ is a subset of $S$. There is a left $K^{\Gbar}(Z)$-module
  structure on $K^{\Gbar}(\XEJ)$ such that $\etaEJ_* \colon K^{\Gbar}(Z) \to
  K^{\Gbar}(\XEJ)$ is $K^{\Gbar}(Z)$-linear.
\end{proposition}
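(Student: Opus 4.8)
The plan is to read the proposition off the general intersection/Tor-product formalism of \autoref{sec:kthy}, since all the geometric data has already been assembled in the paragraphs above. Two points remain to be verified: (a) that the bilinear map $\star_J\colon K^{\Gbar}(Z)\times K^{\Gbar}(\XEJ)\to K^{\Gbar}(\XEJ)$ constructed above is a unital, associative left action of the convolution algebra $(K^{\Gbar}(Z),\star)$; and (b) that the tuple $(\eta,\id,\etaEJ,\etaEJ)$ is a \emph{proper} morphism from the diagram defining $\star$ to the diagram defining $\star_J$, so that \autoref{pro:convlin}\,\autoref{i:conv1} applies verbatim and yields the $K^{\Gbar}(Z)$-linearity of $\etaEJ_*$.

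For (a) I would argue exactly as Lusztig does in \cite[\S6.7]{lusztig:bases} and \cite[\S7.9]{lusztig:bases}. Unitality: the unit of $(K^{\Gbar}(Z),\star)$ is $\varphi(T_1)$, the class of the structure sheaf of the diagonal copy of $\FNt$ in $\Ztilde$; a direct computation, parallel to the verification that $\varphi(T_1)$ is the unit of $\star$, shows $\varphi(T_1)\star_J\CM=\CM$ for every $\CM$ in $K^{\Gbar}(\XEJ)$. Associativity: the identity $(\CF_1\star\CF_2)\star_J\CM=\CF_1\star_J(\CF_2\star_J\CM)$, for $\CF_1,\CF_2$ in $K^{\Gbar}(Z)$ and $\CM$ in $K^{\Gbar}(\XEJ)$, follows from the standard manipulation of iterated Tor-products --- the associativity of $\otimes^L$ together with the projection formula and flat base change along the relevant diagonals --- carried out over the smooth ambient variety $\FNt\times\FNt\times(\fg\times G/P_J)$. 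This is precisely the computation made in \cite[\S7]{lusztig:bases} for the algebra $K^{\Gbar}(Z)$ itself, with the third tensor factor replaced throughout by $K^{\Gbar}(\XEJ)$, so I would cite loc.\ cit.\ rather than reproduce it.

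For (b) one spells out the conditions in the definition of a proper morphism in \autoref{sec:kthy}: that $\eta$ is smooth with $V_1\subseteq\eta\inverse(V_1')$ and $V_2\subseteq\eta\inverse(V_2')$; that the compatibilities $p_1=p_1'\eta_1$, $\etaEJ\,p_2=p_2'\eta_2$, and $\etaEJ\,p_{12}=p_{12}'\eta_{12}$ hold, which is immediate from the explicit formulas for the projections; that $\id$ is flat and $\etaEJ$ is proper (the latter recalled in \autoref{sec:main}); that $V_1=\eta\inverse(V_1')$ on the nose, since the conditions cutting out $\Ztilde\times\FNt$ inside $\FNt^3$ are exactly the pullback along $\eta$ of those cutting out $\Ztilde\times(\fg\times G/P_J)$, the coarsened factor playing no role; and that the square with vertices $\FNt\times\Ztilde$, $Z$, $\FNt\times\Xtilde^{\emptyset J}$, $\XEJ$ (maps $\eta_2$ and $\etaEJ$ horizontally, $p_2$ and $p_2'$ vertically) is cartesian, i.e.\ $\FNt\times\Ztilde\cong Z\times_{\XEJ}(\FNt\times\Xtilde^{\emptyset J})$. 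This last identification one reads off from the definitions: a point of the fibre product is a point $(x,gB,hB)$ of $Z$ together with an auxiliary point of $\FNt$, and the passage from $hP_J$ back to $hB$ is exactly the extra datum recorded by $\Ztilde$ over $\Xtilde^{\emptyset J}$. Granting this, \autoref{pro:convlin}\,\autoref{i:conv1} with $f=\id$ and $g=h=\etaEJ$ gives $\etaEJ_*(\CF'\star\CG)=\CF'\star_J\etaEJ_*(\CG)$ for all $\CF',\CG$ in $K^{\Gbar}(Z)$, which is the asserted linearity. I expect the only step demanding genuine care to be the associativity of $\star_J$ in (a): it is the same argument as for the convolution algebra, but keeping the flatness, properness, and Tor-independence hypotheses straight through the iterated-product construction takes some bookkeeping; everything else is a routine unwinding of the definitions already in place.
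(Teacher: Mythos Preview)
Your proposal is correct and follows exactly the route the paper takes: the paper sets up $\star_J$, asserts it gives a $K^{\Gbar}(Z)$-module structure, asserts that $(\eta,\id,\etaEJ,\etaEJ)$ is a proper morphism in the sense of \autoref{sec:kthy}, and then records that the linearity follows from \autoref{pro:convlin}\,\autoref{i:conv1}. Your write-up simply fills in the verifications (module axioms via Lusztig's argument, and the explicit check of the proper-morphism conditions including the cartesian square) that the paper leaves implicit.
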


As a second application we consider $K^{\Gbar}(Z_1)$-module structures
on $K^{\Gbar}(Z_{\ssleq w})$ and $K^{\Gbar}(Z_{w})$ for $w$ in
$W$. Notice that $Z_1=\{\, (x, gB, gB)\in Z \mid g\inverse x\in \fu\}$
is a closed subvariety of $Z$ isomorphic to $\FNt$. Thus, $Z_1$ is
smooth and so $K^{\Gbar}(Z_1)$ has a natural ring structure given by
tensor product.

Suppose $w$ is in $W$. Taking $V= \FNt^3$, $V_1= \Ztilde_1\times \FNt$,
$V_2= \FNt \times \Ztilde_{\ssleq w}$, and using the obvious projections
$V_1\to Z_1$ and $V_2\to Z_{\ssleq w}$ in~\eqref{eq:tor}, we obtain a
$K^{\Gbar}(Z_1)$-module structure on $K^{\Gbar}(Z_{\ssleq w})$ that is
denoted by $\star_w$,
\[
\star_w\colon K^{\Gbar}(Z_1) \times K^{\Gbar}(Z_{\ssleq w}) \to
K^{\Gbar}(Z_{\ssleq w}).
\]
Notice that when $w=w_0$ is the longest element in $W$ we get a
$K^{\Gbar}(Z_1)$-module structure on $K^{\Gbar}(Z)$. To simplify the
notation slightly, set $\star_1=\star_{w_0}$.  It follows
from~\autoref{pro:convlin}\,\autoref{i:conv1} that this module structure is
compatible, via the inclusion $j_1\colon Z_1\to Z$, with the multiplication
$\star$ in $K^{\Gbar}(Z)$, in the sense that
\begin{equation}
  \label{eq:star1}
  \CF\star_1 \CG= (j_1)_*\CF \star \CG
\end{equation}
for $\CF$ in $K^{\Gbar}(Z_1)$ and $\CG$ in $K^{\Gbar}(Z)$.

Similarly, taking $V=\FNt\times \Ztilde_w$, $V_1= (\Ztilde_1\times \FNt)
\cap (\FNt \times \Ztilde_{ w})$, $V_2= \FNt \times \Ztilde_{ w}$, and using
the obvious projections $V_1\to Z_1$ and $V_2\to Z_w$, we obtain a
$K^{\Gbar}(Z_1)$-module structure on $K^{\Gbar}(Z_w)$ that is
denoted by $\star_w'$,
\[
\star_w'\colon K^{\Gbar}(Z_1) \times K^{\Gbar}(Z_w) \to K^{\Gbar}(Z_w).
\]

Now suppose $y$ and $w$ are in $W$ with $y\leq w$. Let $j_y^w\colon
Z_{\ssleq y}\to Z_{\ssleq w}$ be the inclusion. In the special case when
$w=w_0$ set $j_y=j_y^{w_0}$. Then $j_y$ is the inclusion of $Z_{\ssleq y}$
in $Z$. Recall that $r_w\colon Z_w\to Z_{\ssleq w}$ is the inclusion. Notice
that $j_y^w$ is a closed embedding and that $r_w$ is an open embedding. The
proof of the next lemma is a straightforward application
of~\autoref{pro:convlin}.

\begin{lemma}\label{lem:starlin}
   Suppose $y$ and $w$ are in $W$ with $y\leq w$. Then the maps
   \[
   (j_y^w)_*\colon K^{\Gbar}(Z_{\ssleq y}) \to K^{\Gbar}( Z_{\ssleq w})
   \quad \text{and}\quad r_w^* \colon K^{\Gbar}(Z_{\ssleq w}) \to K^{\Gbar}
   (Z_w)
   \]
   are $K^{\Gbar}(Z_1)$-module homomorphisms.
\end{lemma}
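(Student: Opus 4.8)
The plan is to derive both statements from \autoref{pro:convlin}, by exhibiting morphisms between the diagrams of the form~\eqref{eq:tor} that define the relevant module structures, always with $\id_{Z_1}$ in the ``$A$''\nobreakdash-slot so that the term $f^*$ occurring in \autoref{pro:convlin} is the identity and what remains is exactly the asserted $K^{\Gbar}(Z_1)$\nobreakdash-linearity. For $(j_y^w)_*$, I would compare the diagram defining $\star_y$ on $K^{\Gbar}(Z_{\ssleq y})$ with the one defining $\star_w$ on $K^{\Gbar}(Z_{\ssleq w})$: both use the ambient smooth variety $\FNt^3$ and $V_1=\Ztilde_1\times\FNt$, and only $V_2$ changes, from $\FNt\times\Ztilde_{\ssleq y}$ to $\FNt\times\Ztilde_{\ssleq w}$. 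The tuple $(\id_{\FNt^3},\id_{Z_1},j_y^w,j_y^w)$ is a morphism of diagrams, and it is \emph{proper}: $\id_{Z_1}$ is flat, $j_y^w$ is a closed embedding and hence proper, the equality $V_1=\mu\inverse(V_1')$ is immediate, and the square relating $V_2\to Z_{\ssleq y}$, $V_2'\to Z_{\ssleq w}$ and $j_y^w$ is cartesian because $\Ztilde_{\ssleq y}$ corresponds to $Z_{\ssleq y}$ under the identification $\Ztilde_{\ssleq w}\cong Z_{\ssleq w}$. Applying \autoref{pro:convlin}\,\autoref{i:conv1} then yields $(j_y^w)_*(\CF\star_y\CG)=\CF\star_w(j_y^w)_*\CG$ for $\CF\in K^{\Gbar}(Z_1)$ and $\CG\in K^{\Gbar}(Z_{\ssleq y})$, which is the first assertion.

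For $r_w^*$ the obstruction is that the diagram defining $\star_w'$ on $K^{\Gbar}(Z_w)$ uses the ambient variety $\FNt\times\Ztilde_w$, which (although smooth, since $Z_w$ is) sits in $\FNt^3$ only as a locally closed subvariety, so the obvious $\mu$ is not smooth. I would therefore first re\nobreakdash-present $\star_w'$ by means of an open neighbourhood: since $\Ztilde_w$ is open in $\Ztilde_{\ssleq w}$ and $\Ztilde_{\ssleq w}$ is closed in $\FNt^2$, there is an open subvariety $\Omega\subseteq\FNt^3$ with $\Omega\cap(\FNt\times\Ztilde_{\ssleq w})=\FNt\times\Ztilde_w$. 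Restricting the $\star_w$\nobreakdash-diagram along the open immersion $\Omega\hookrightarrow\FNt^3$ produces a diagram of the form~\eqref{eq:tor} with ambient variety $\Omega$, with $V_1=(\Ztilde_1\times\FNt)\cap\Omega$, with $V_2=\FNt\times\Ztilde_w$, with target $Z_w$, and with $p_{12}$ still an isomorphism onto $Z_w$; since $\FNt\times\Ztilde_w$ is a smooth closed subvariety of $\Omega$ meeting $V_1$ in all of $V_1\cap V_2$, the independence and reduction properties of the intersection product recorded in \cite[\S6.5--6.7]{lusztig:bases} identify the module structure this diagram defines on $K^{\Gbar}(Z_w)$ with $\star_w'$. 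Now $(\Omega\hookrightarrow\FNt^3,\ \id_{Z_1},\ r_w,\ r_w)$ is a \emph{smooth} morphism from this re\nobreakdash-presented diagram to the $\star_w$\nobreakdash-diagram: $\id_{Z_1}$ and $r_w$ are smooth ($r_w$ an open embedding), the equalities $V_1=\mu\inverse(V_1')$ and $V_2=\mu\inverse(V_2')$ hold by the choice of $\Omega$, and the square relating $V_1\cap V_2\to Z_w$, $V_1'\cap V_2'\to Z_{\ssleq w}$ and $r_w$ is cartesian because the two maps $p_{12}$ and $p_{12}'$ are isomorphisms intertwining $r_w$. Applying \autoref{pro:convlin}\,\autoref{i:conv2} then yields $r_w^*(\CF\star_w\CG)=\CF\star_w'r_w^*(\CG)$ for $\CF\in K^{\Gbar}(Z_1)$ and $\CG\in K^{\Gbar}(Z_{\ssleq w})$, which is the second assertion.

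The one point that really needs care is the re\nobreakdash-presentation of $\star_w'$ in the second paragraph: one must verify that enlarging the ambient smooth variety from $\FNt\times\Ztilde_w$ to the open set $\Omega$ leaves the resulting bilinear map unchanged, and that the proper pushforward $(p_{12})_*$ commutes with restriction to $\Omega$. Both reduce to the standard compatibilities of $\otimes^L$ and of proper pushforward with flat base change that underlie the construction of \cite[\S6]{lusztig:bases}, so once these are invoked the remainder is the bookkeeping exercise meant by the phrase ``a straightforward application of \autoref{pro:convlin}''.
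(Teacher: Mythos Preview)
Your proposal is correct and follows exactly the route the paper indicates: the paper's proof consists solely of the sentence ``a straightforward application of \autoref{pro:convlin}'', and you have carried out precisely that application. Your treatment of $(j_y^w)_*$ via \autoref{pro:convlin}\,\autoref{i:conv1} with the tuple $(\id_{\FNt^3},\id_{Z_1},j_y^w,j_y^w)$ is the intended one, and your verification of the proper\nobreakdash-morphism axioms is clean.

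For $r_w^*$ you have in fact done more than the paper does: you correctly spot that the ambient variety for $\star_w'$ is $\FNt\times\Ztilde_w$ rather than $\FNt^3$, so that the naive comparison map is not smooth, and you resolve this by passing to an open $\Omega\subseteq\FNt^3$ and invoking the independence properties from \cite[\S6]{lusztig:bases}. This is a genuine point that the paper's one\nobreakdash-line proof glosses over, and your fix is the standard one. The only remark I would add is that the re\nobreakdash-presentation step can be phrased slightly more directly: since $\FNt\times\Ztilde_w$ is smooth and is a closed subvariety of $\Omega$ containing the support of $p_2^*\CG$, the excision/reduction property in \cite[\S6.6]{lusztig:bases} identifies the Tor\nobreakdash-product computed in $\Omega$ with the one computed in $\FNt\times\Ztilde_w$; this is exactly what you invoke, so the argument is complete.
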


\section{The \texorpdfstring{$K^{\Gbar}(Z)$}{}-module homomorphism
  \texorpdfstring{$\eta^{\emptyset J}_* \colon K^{\Gbar}(Z)\to
    K^{\Gbar}(\XEJ)$}{}} \label{sec:as}

In this section we prove~\autoref{thm:as}, the assertion that
$\etaEJ_*(\mathbf a_s)= 0$ for $s$ in $J$, and thus complete the argument
that there is an $R(\Gbar)$-module homomorphism $\psi^{IJ}\colon \CHIJ\to
K^{\Gbar}(\XIJ)$ such that $\psi^{IJ} \chi^{IJ}= \etaIJ \varphi$ as
in~\autoref{thm:main}.

For a subset $I$ of $S$, define $\CO_I$ to be the orbit $G\cdot (B, w_IB)$
in $G/B \times G/B$ and define $Z_I=Z_{w_I}$ in $Z$.
 
\begin{proposition}
  Suppose $I$ is a subset of $S$.
  \begin{enumerate}
  \item The closure of $\CO_I$ in $G/B\times G/B$ is
    \[
    \OIbar= \coprod_{w\in W_I} \CO_w = \{\, (gB, hB) \mid g\inverse h\in
    P_I\,\}.
    \] \label{i:1}
  \item The closure of $Z_I$ in $Z$ is  
    \begin{align*}
      \ZIbar&=\{(x, gB, hB)\in Z\mid (gB,hB)\in \OIbar,\, g\inverse x\in
      \fu_I,\, h\inverse x\in \fu_I\,\}\\
      &=\{(x, gB, hB)\in Z\mid g\inverse h\in P_I,\, g\inverse x\in
      \fu_I\,\}.
    \end{align*} \label{i:2}
  \item There are isomorphisms $\OIbar \cong G\times^{B} P_I/B$ and $
    \ZIbar \cong G\times^{B} (\fu_I\times P_I/B)$ such that the
    diagram
    \begin{equation}
      \label{eq:new}
      \vcenter{\vbox{
          \xymatrix{ G\times^{B} (\fu_I\times P_I/B)\ar[r]_-{\cong} \ar[d] &
            \ZIbar \ar[d]^{q_I} \\
            G\times^{B} P_I/B \ar[r]_-{\cong} & \OIbar }    
        }}
    \end{equation}
    commutes. Here, the left vertical map is the obvious one and $q_I$ is
    the projection on the second and third coordinates. In particular, the
    varieties $\OIbar$ and $\ZIbar$ are smooth and the projection $q_I
    \colon \ZIbar\to \OIbar$ is a vector bundle with fibre
    $\fu_I$. \label{i:3}
  \end{enumerate}
\end{proposition}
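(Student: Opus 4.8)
The plan is to prove the three parts in order, since each builds on the previous one. For part~\autoref{i:1}, recall the Bruhat decomposition of $G/B \times G/B$: the $G$-orbits $\CO_w$ are indexed by $w \in W$, and $\CO_w \subseteq \overline{\CO_{w'}}$ if and only if $w \leq w'$ in the Bruhat order. Since $w_I$ is the longest element of $W_I$, the set $\{w \in W \mid w \leq w_I\}$ is precisely $W_I$, which gives the first equality $\overline{\CO_I} = \coprod_{w \in W_I} \CO_w$. For the second description, the orbit $\CO_w$ consists of pairs $(gB, hB)$ with $g^{-1}h \in BwB$, so $\coprod_{w \in W_I} \CO_w$ is the set of $(gB,hB)$ with $g^{-1}h \in \coprod_{w \in W_I} BwB = P_I$, using the standard Bruhat-style decomposition $P_I = \coprod_{w\in W_I} BwB$. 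The map $(gB,hB) \mapsto (gB, g^{-1}h \cdot B)$ identifies $\overline{\CO_I}$ with the $G$-homogeneous bundle $G \times^B (P_I/B)$ over $G/B$, which is smooth.

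For part~\autoref{i:2}, the key geometric input is that $Z_I = Z_{w_I}$ is (as recorded earlier in the excerpt) the conormal-type subvariety $\{(x,gB,gw_IB) \mid g^{-1}x \in \fu \cap w_I\fu\}$. I would first compute $\fu \cap w_I \fu$: since $w_I$ is the longest element of $W_I$, it sends $\Phi_I^+$ to $-\Phi_I^+$ and permutes $\Phi^+ \setminus \Phi_I$; hence the roots of $\fu \cap w_I\fu$ are exactly $\Phi^+ \setminus \Phi_I$, i.e.\ $\fu \cap w_I\fu = \fu_I$. The description of $\overline{Z_I}$ as a set then follows by taking the closure: a point $(x,gB,hB)$ lies in $\overline{Z_I}$ iff $(gB,hB) \in \overline{\CO_I}$ (i.e.\ $g^{-1}h \in P_I$) and the nilpotent $x$ lies in the appropriate limiting condition, which one checks is $g^{-1}x \in \fu_I$. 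The condition $h^{-1}x \in \fu_I$ is then automatic: writing $h = gp$ with $p \in P_I$, one has $h^{-1}x = p^{-1}(g^{-1}x) \in p^{-1}\fu_I$, and $\fu_I$ is $P_I$-stable (indeed $L_I$-stable and $U_I$ acts trivially modulo higher terms, but in fact $\fu_I$ is an ideal-complement stable under $\Ad(P_I)$ since $\Phi^+\setminus\Phi_I$ is $W_I$-stable and closed under adding $\Phi^+$), so $h^{-1}x \in \fu_I$; this is why the two set-descriptions in~\autoref{i:2} agree. One must be slightly careful justifying that the closure is exactly this set and not something smaller, but the bundle description in part~\autoref{i:3} will make this transparent.

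For part~\autoref{i:3}, the map $(x, gB, hB) \mapsto (gB,\ (g^{-1}h)\cdot B,\ g^{-1}x)$ is a $\Gbar$-equivariant morphism from $\overline{Z_I}$ to $G \times^B (\fu_I \times P_I/B)$; its inverse sends $g * (n, pB) \mapsto (g\cdot n,\ gB,\ gpB)$, which lands in $\overline{Z_I}$ precisely because $g^{-1}(g\cdot n) = n \in \fu_I$ and $g^{-1}(gp)=p \in P_I$. These are mutually inverse $\Gbar$-equivariant isomorphisms, and they are compatible with $q_I$ and the bundle projection $G\times^B(\fu_I\times P_I/B) \to G\times^B(P_I/B)$ by construction, so diagram~\eqref{eq:new} commutes. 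Smoothness of $\overline{Z_I}$ and the vector bundle statement are then immediate: $G\times^B(\fu_I\times P_I/B)$ is a vector bundle over $G\times^B(P_I/B)$ with fibre $\fu_I$ (pulled back from the vector bundle $G\times^B \fu_I \to G/B$), and $G\times^B(P_I/B)$ is smooth. The main obstacle is the closure computation in~\autoref{i:2}: one needs to verify that the nilpotency/Lie-algebra condition degenerates correctly in the limit, i.e.\ that no points are lost or gained. I would handle this cleanly by \emph{defining} $\overline{Z_I}$ via the bundle $G\times^B(\fu_I\times P_I/B)$, checking directly that this closed subvariety of $Z$ contains $Z_I$ as a dense open subset (the open stratum $w = w_I$ corresponds to the open $B$-orbit $Bw_I B/B$ in $P_I/B$, over which the fibre condition reduces to $\fu\cap w_I\fu = \fu_I$), and concluding that it equals the closure since $Z_I$ is irreducible of the same dimension.
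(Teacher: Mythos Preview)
Your proposal is correct and follows essentially the same route as the paper. Both arguments identify $\fu\cap w_I\fu=\fu_I$, realize the candidate set for $\ZIbar$ as the vector bundle $G\times^B(\fu_I\times P_I/B)$ over $\OIbar\cong G\times^B(P_I/B)$, observe that this closed irreducible subvariety of $Z$ contains $Z_I$, and conclude equality by irreducibility (you phrase it via dimension and density of $Z_I$; the paper phrases it as ``$\ZIbar$ is an irreducible component of $Z$ and $Y_2$ is closed irreducible, hence $\ZIbar=Y_2$''). Your explicit justification that $h^{-1}x\in\fu_I$ is automatic from $g^{-1}x\in\fu_I$ and $g^{-1}h\in P_I$ via $P_I$-stability of $\fu_I$ is a point the paper leaves implicit.
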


\begin{proof}
  The first equality in~\autoref{i:1} is well known. For the second equality,
  set
  \[
  Y_1= \{\, (gB, hB) \mid g\inverse h\in P_I\,\}.
  \] 
  Given $g$ in $G$, $(gB, gw_IB)$ is obviously in $Y_1$ so $\CO_I\subseteq
  Y_1$. Thus $\OIbar\subseteq Y_1$. Conversely, if $(gB, hB)$ is in $Y_1$,
  then there is a $w$ in $W_I$ so that $g\inverse h$ is in $BwB$. Say
  $g\inverse h=b_1wb_2$. Then $(gB, hB)= (gB, gb_1 w B)$ is in $\CO_w$. This
  proves the first statement.

  The second equality in~\autoref{i:2} follows from~\autoref{i:1}. Set
  \[
  Y_2= \{(x, gB, hB)\in Z\mid g\inverse h\in P_I,\, g\inverse x\in
  \fu_I\,\}.
  \] 
  Then $Y_2$ is closed in $Z$ and the projection $Y_2\to \OIbar$ is a vector
  bundle with fibre $\fu_I$. It follows that $Y_2$ is irreducible. Since
  \[
  Z_I=\{\, (x, gB, gw_IB) \mid g\inverse x\in \fu\cap w_I\fu\,\}
  \]
  and $\fu_I=\fu\cap w_I\fu$, we see that $Z_I$, and hence $\ZIbar$, is
  contained in $Y_2$. Since $\ZIbar$ is an irreducible component of $Z$ and
  $Y_2$ is closed and irreducible we have $\ZIbar= Y_2$. This proves the
  second statement.

  For each of the varieties $\OIbar$ and $\ZIbar$, projection on the
  first factor of $G/B$ is a $G$-equivariant surjection and induces
  $G$-equivariant isomorphisms
  \[
  \OIbar \cong G\times^{B} P_I/B\qquad \text{and} \qquad \ZIbar \cong
  G\times^{B} (\fu_I\times P_I/B).
  \]
  It is straightforward to check that diagram~\eqref{eq:new} commutes.  This
  proves the last statement. \qed
\end{proof}

Suppose $s$ is in $S$ and consider the vector bundle $q_I\colon \ZIbar \to
\OIbar$ in the case when $I=\{s\}$.  Let $i_s\colon \Osbar\to G/B\times G/B$
and $k_s\colon \Zsbar\to Z$ be the inclusions and set $q_s=q_{\{s\}}$. Then
the diagram of $\Gbar$-equivariant morphisms
\begin{equation*}
  \vcenter{\vbox{
      \xymatrix{ \Zsbar \ar[d]^{q_s} \ar[r]^-{k_s} & Z \ar[r]^-{\ktilde} &
        \FNt \times G/B \ar[d]^{q\times \id} \\  
        \Osbar \ar[rr]^-{i_s} && G/B \times G/B,}  
    }}
\end{equation*}
where $\ktilde$ is the inclusion, commutes. Given $\lambda', \lambda''$ in
$X(T)$, set
\[
\CL_{\lambda', \lambda''}= (k_s)_* q_s^* i_s^* \left( [\CL_{\lambda'}
  \boxtimes \CL_{\lambda''}] \right) = (k_s)_* k_s^*  \ktilde^* (q\times \id)^*
\left( [\CL_{\lambda'} \boxtimes \CL_{\lambda''}] \right)
\]
in $K^{\Gbar}(Z)$.

\begin{proposition}\label{pro:special}
  Suppose $J\subseteq S$, $\alpha\in \Pi_J$, $s=s_\alpha$, $\lambda''\in
  X(T)$ with $\langle \lambda'', \check \alpha \rangle =-1$, and
  $\lambda'\in X(T)$. Then $\etaEJ_* \left( [\CL_{\lambda',\lambda''}]
  \right) =0$ in $K^{\Gbar}(\XEJ)$.
\end{proposition}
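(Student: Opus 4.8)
The plan is to study the restriction of $\etaEJ$ to the closed subvariety $\Zsbar=Z_{\{s\}}$, which carries the class $[\CL_{\lambda',\lambda''}]$. First I would observe that $\alpha\in\Pi_J$ forces $s\in J$, hence $P_{\{s\}}\subseteq P_J$; therefore, if $(x,gB,hB)\in\Zsbar$, so that $g\inverse h\in P_{\{s\}}$, then $hP_J=gP_J$ and $\etaEJ(x,gB,hB)=(x,gB,gP_J)$ depends only on $(x,gB)$. Thus $\etaEJ$ maps $\Zsbar$ onto the subvariety $W=\{\,(x,gB,gP_J)\mid g\inverse x\in\fu_{\{s\}}\,\}$ of $\XEJ$, which is closed since it is the image of the proper map $\etaEJ\circ k_s$. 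Using the isomorphism $\Zsbar\cong G\times^B(\fu_{\{s\}}\times P_{\{s\}}/B)$ of the previous proposition together with the isomorphism $W\cong G\times^B\fu_{\{s\}}$ given by projection to the first $G/B$-factor, I would check that $\etaEJ|_{\Zsbar}$ is identified with the projection $\pi\colon G\times^B(\fu_{\{s\}}\times P_{\{s\}}/B)\to G\times^B\fu_{\{s\}}$ that forgets the $P_{\{s\}}/B$-coordinate; in particular $\pi$ is a locally trivial fibre bundle with fibre $P_{\{s\}}/B\cong\BBP^1$. Since $k_s$ and $\etaEJ$ are proper, writing $\iota\colon W\hookrightarrow\XEJ$ for the inclusion gives $\etaEJ_*[\CL_{\lambda',\lambda''}]=\iota_*\,\pi_*\big(q_s^*i_s^*[\CL_{\lambda'}\boxtimes\CL_{\lambda''}]\big)$, so it suffices to prove $\pi_*\big(q_s^*i_s^*[\CL_{\lambda'}\boxtimes\CL_{\lambda''}]\big)=0$ in $K^{\Gbar}(W)$.

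The second step is to identify the line bundle $q_s^*i_s^*(\CL_{\lambda'}\boxtimes\CL_{\lambda''})$ in these coordinates. By diagram~\eqref{eq:new} with $I=\{s\}$, the map $q_s$ is, in the $G\times^B$-picture, the projection forgetting the $\fu_{\{s\}}$-coordinate; hence $q_s^*i_s^*(\CL_{\lambda'}\boxtimes\CL_{\lambda''})$ is the pullback, along the second projection $\fu_{\{s\}}\times P_{\{s\}}/B\to P_{\{s\}}/B$, of the $\Bbar$-equivariant line bundle $\CM$ on $P_{\{s\}}/B$ corresponding, via $\res$ and $\Osbar\cong G\times^B P_{\{s\}}/B$, to $i_s^*(\CL_{\lambda'}\boxtimes\CL_{\lambda''})$. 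I would then compute the degree of the underlying line bundle of $\CM$: the pullback of $\CL_{\lambda'}$ along the first projection $\Osbar\to G/B$ is constant along the fibres $P_{\{s\}}/B$, so it contributes only a character twist, while the pullback of $\CL_{\lambda''}$ along the second projection $\Osbar\to G/B$ restricts on each such fibre to a line bundle of degree $\langle\lambda'',\check\alpha\rangle=-1$ (with the paper's sign convention, the one for which $\CL_\mu$ is positive when $\mu$ is dominant). Hence $\CM$ is, nonequivariantly, $\CO_{\BBP^1}(-1)$.

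The final step is to compute $\pi_*$. Under the isomorphisms $K^{\Gbar}(\Zsbar)\cong K^{\Bbar}(\fu_{\{s\}}\times P_{\{s\}}/B)$ and $K^{\Gbar}(W)\cong K^{\Bbar}(\fu_{\{s\}})$ furnished by $\res$, the map $\pi_*$ becomes pushforward along the product projection $\fu_{\{s\}}\times P_{\{s\}}/B\to\fu_{\{s\}}$; by flat base change along $\fu_{\{s\}}\to\operatorname{pt}$ and the projection formula, the image of $q_s^*i_s^*[\CL_{\lambda'}\boxtimes\CL_{\lambda''}]$ is the pullback to $\fu_{\{s\}}$ of $R\Gamma(P_{\{s\}}/B,\CM)\in R(\Bbar)$. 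Since the underlying line bundle of $\CM$ is $\CO_{\BBP^1}(-1)$, we have $H^i(\BBP^1,\CM)=0$ for all $i$, so $R\Gamma(P_{\{s\}}/B,\CM)=0$ in $R(\Bbar)$; therefore $\pi_*\big(q_s^*i_s^*[\CL_{\lambda'}\boxtimes\CL_{\lambda''}]\big)=0$ and $\etaEJ_*[\CL_{\lambda',\lambda''}]=\iota_*(0)=0$.

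The step I expect to be the main obstacle is the second one: pinning down precisely which $\Bbar$-equivariant line bundle on $P_{\{s\}}/B$ one obtains after restricting $\CL_{\lambda'}\boxtimes\CL_{\lambda''}$ to $\Osbar$ and pulling back to $\Zsbar$, and verifying that the $\CL_{\lambda'}$-factor contributes only a character twist and does not change the degree along the $\BBP^1$-fibres, so that the hypothesis $\langle\lambda'',\check\alpha\rangle=-1$ is exactly what forces $\CM\cong\CO(-1)$. Everything else reduces to standard facts: the compatibility of $\res$ with pushforward and pullback, the projection formula, flat base change, and the vanishing $R\Gamma(\BBP^1,\CO(-1))=0$.
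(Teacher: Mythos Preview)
Your proposal is correct and follows essentially the same route as the paper: factor $\etaEJ|_{\Zsbar}$ through a $\BBP^1$-bundle over a closed image (the paper calls it $Y_s\subseteq X^{\emptyset\{s\}}$, isomorphic to your $W\subseteq\XEJ$), identify the restriction of the line bundle to each fibre $P_{\{s\}}/B\cong\BBP^1$ as $\CO_{\BBP^1}(-1)$ (the $\CL_{\lambda'}$-factor is constant along fibres, the $\CL_{\lambda''}$-factor has degree $\langle\lambda'',\check\alpha\rangle=-1$), and conclude from $H^*(\BBP^1,\CO(-1))=0$. The only technical difference is in how the vanishing of the pushforward is deduced: the paper works with the actual sheaf $\CL^s_{\lambda',\lambda''}$, shows via Grauert's Theorem that each $R^n\eta'_*\CL^s_{\lambda',\lambda''}$ vanishes fibrewise, and then passes to $K$-theory, whereas you pass to $K^{\Bbar}$ via $\res$ and invoke flat base change and the projection formula; both are standard and yield the same conclusion.
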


\begin{proof}
  Set $P_s=P_{\{s\}}$, $\fu_s=\fu_{\{s\}}$, and define
  \[
  Y_s= \eta^{\emptyset \{s\}}(\Zsbar)= \{\, (x, gB, gP_s)\in \FN
  \times G/B \times G/P_s \mid g\inverse x\in \fu_{s} \,\}.
  \]
  We have a commutative diagram of proper morphisms,
  \[
  \xymatrix{ G\times^{B} (\fu_{s} \times P_{s}/B) \ar[r]_-{\cong}
    \ar[d]&  \Zsbar \ar[d]^{\eta'} \ar[r]^-{k_s} & Z \ar[d]^{\etaEJ} \\
    G\times^{B} \fu_{s}\ar[r]_-{\cong} & Y_s \ar[r]^{\eta^J} & \XEJ,}
  \]
  where the isomorphism $G\times^{B} \fu_s \xrightarrow{\ \cong\ }
  Y_s$ is induced by projection on the second factor, $\eta'(x, gB,
  hB)= (x, gB, hP_s)$, and $\eta^J(x, gB, hP_s)= (x, gB,
  hP_J)$. Notice in particular that $\eta'$ is a Zariski locally
  trivial $\BBP^1$-bundle.

  Set $\CL^s_{\lambda', \lambda''}= q_s^* i_s^* \left( \CL_{\lambda'}
    \boxtimes \CL_{\lambda''} \right)$. To prove the proposition it is
  enough to show that $\eta'_*([\CL^s_{\lambda', \lambda''}])= 0$ in
  $K^{\Gbar}(Y_s)$. We show that $R^n \eta'_* \CL^s_{\lambda',
    \lambda''}|_y=0$ for all $y$ in $Y_s$ and for all $n\geq 0$. It
  then follows that $R^n \eta'_* \CL^s_{\lambda', \lambda''}=0$ for
  all $n\geq 0$ and so $\eta'_*([\CL^s_{\lambda', \lambda''}])= 0$ by
  the definition of $\eta'_*$.

  Suppose $y$ is in $Y_s$. Because $\eta'$ is $G$-equivariant we may
  assume $y= (x, B, P_s)$ for some $x$ in $\fu_s$. Define
  $E=(\eta')\inverse(y)= \{\, (x, B, pB)\mid p\in P_s\,\}$ and let
  $f\colon E\to \Zsbar$ be the inclusion mapping. Projection on the
  third factor defines an isomorphism $E \cong P_s/B$ and so $E\cong
  \BBP^1$.

  The restriction of $\CL^s_{\lambda',\lambda''}$ to $E$ is given by
  \[
  \CL^s_{\lambda',\lambda''} |_{E} = (i_sq_sf)^* (\CL_{\lambda'} \boxtimes
  \CL_{\lambda''})= (p_1i_sq_sf)^* \CL_{\lambda'} \otimes (p_2i_sq_sf)^*
  \CL_{\lambda''}
  \]
  where $p_1$ and $p_2$ are the two projections of $G/B \times G/B$
  onto $G/B$. The composition $p_1 i_s q_s f$ is a constant map, so
  $\CL_{\lambda'}$ pulls back to the trivial line bundle
  $\CO_{\BBP^1}(0)$ on $E$. The map $p_2 i_s q_s f$ restricts to an
  isomorphism $E \cong P_s/B$, and because $\langle \lambda'', \check
  \alpha \rangle =-1$, $\CL_{\lambda''}$ pulls back to the
  tautological bundle on $P_s/B$.  Therefore,
  $\CL^s_{\lambda',\lambda''} |_{E}$ may be identified with the line
  bundle $\CO_{\BBP^1}(-1)$ on $\BBP^1$.

  Finally, for $n\geq 0$, $H^n( E, \CL^s_{\lambda', \lambda''} |_{E}) \cong
  H^n( \BBP^1, \CO_{\BBP^1}(-1)) =0$, and hence by Grauert's Theorem, $R^n
  \eta'_* \CL^s_{\lambda', \lambda''}|_y\cong H^n( E, \CL^s_{\lambda',
    \lambda''} |_{E}) =0$ as claimed. \qed
\end{proof}

We can now complete the proof of~\autoref{thm:as}. With $J$, $\alpha$, and
$s$ as above, suppose that $\lambda'$ and $\lambda''$ are in $X(T)$,
$\langle\lambda ', \check \alpha \rangle= \langle\lambda '', \check \alpha
\rangle= -1$, and $-\alpha=\lambda'+\lambda''$. Define $\mathbf a_s$ in
$K^{\Gbar}(Z)$ by
\[
\mathbf a_s= v\inverse \, \CL_{\lambda', \lambda''} = v\inverse (k_s)_*
q_s^* i_s^* \left( [\CL_{\lambda'} \boxtimes \CL_{\lambda''}] \right).
\]
Lusztig \cite[\S7.19, 7.25]{lusztig:bases} has shown that $\mathbf a_s$ does
not depend on the choice of $\lambda'$ and $\lambda''$. It follows
from~\autoref{pro:special} that $\etaEJ_*(\mathbf a_s)= 0$, as asserted in
the statement of the theorem.

\section{An \texorpdfstring{$A$}{}-basis of
  \texorpdfstring{$\CHIJ$}{}} \label{sec:basis}

By~\autoref{lem:double}, $\{\, t_\lambda z\mid z\in \WIJ,\, \lambda \in
X(T)_{z} ^+ \,\}$ is a set of $(W_I,W_J)$-double coset representatives in
$\Wex$. In this section we prove~\autoref{thm:hijbasis} and thus show that
$\CHIJ$ has an $A$-basis indexed by $W_I\backslash \Wex /W_J$.

We need to show that the set $\CE=\{\, C_{w_I} \theta_\lambda T_z C_{w_J}\mid
z\in \WIJ,\, \lambda \in X(T)_{z} ^+ \,\}$ is an $A$-basis of $\CHIJ$.  To
see that $\CE$ is linearly independent, notice that the specialization
$v\mapsto 1$ defines an $A$-linear homomorphism
\[
\CHIJ \to \epsilon_I \BBZ[\Wex] \epsilon_J,
\]
where as above, $\epsilon_I=\sum_{w\in
  W_I}\epsilon_ww$. By~\autoref{lem:double}, $\{\, \epsilon_I t_\lambda z
\epsilon_J \mid z\in \WIJ,\, \lambda \in X(T)_{z} ^+ \,\}$ is linearly
independent in $\epsilon_I \BBZ[\Wex] \epsilon_J$ and so $\CE$ is linearly
independent in $\CHIJ$.

To prove that $\CE$ spans $\CHIJ$ we adapt an argument of Nelsen and Ram
\cite{nelsenram:kostka}.  It follows from the relation
\begin{equation}
  \label{eq:8}
  \theta_\lambda T_s -T_s \theta_{s(\lambda)}= (v^2-1) \frac
  {\theta_\lambda-\theta_{s(\lambda)}} {1-\theta_{-\alpha}},    
\end{equation}
for $\lambda$ in $X(T)$ and $s=s_\alpha$ with $\alpha$ in $\Pi$ that
if $w$ is in $W$, then $\theta_\lambda T_w$ is in the $A$-span of
$\{\, T_y \theta_\mu\mid y\leq w,\ \mu\in X(T)\,\}$. Therefore, if $w$
is in $W$ and $w=w_1zw_2$ where $z$ is in $\WIJ$, $w_1$ in $W_I$,
$w_2$ in $W_J$, and $\ell(w)= \ell(w_1)+\ell(z)+\ell(w_2)$, then
$\theta_\lambda T_{w_1zw_2}=\theta_\lambda T_{w_1} T_z T_{w_2}$ is an
$A$-linear combination of elements of the form $T_y \theta_\mu T_z
T_{w_2}$ where $y\leq w_1$ is in $W_I$. Therefore, the elements
$C_{w_I} \theta_\lambda T_z C_{w_J}$ for $z$ in $\WIJ$ and $\lambda$
in $X(T)$ span $\CHIJ$.

Fix $z$ in $\WIJ$ and suppose that $s=s_\alpha$ with $\alpha$ in $\Pi_I \cap
z\Pi_J$. Then $T_sT_z=T_{sz}=T_{z z\inverse sz}= T_z T_{s'}$, where
$s'=z\inverse sz$ is in $J$. Thus, multiplying~\eqref{eq:8} on the right
by $T_z$ and then taking the image in $\CHIJ$, we have
\begin{equation}
  \label{eq:9}
  C_{w_I} \left( -\theta_\lambda T_z +\theta_{s(\lambda)}T_z \right)
  C_{w_J} = C_{w_I} \left( (v^2-1) \frac
    {\theta_\lambda-\theta_{s(\lambda)}} {1-\theta_{-\alpha}} T_z \right)
  C_{w_J}  
\end{equation}
for any $\lambda$ in $X(T)$. Replacing $\lambda$ by $\lambda-\alpha$,
subtracting the result from~\eqref{eq:9}, and simplifying, we get
\begin{equation}
  \label{eq:x4}
  C_{w_I} \theta_{s(\lambda)} T_z C_{w_J}  = 
  C_{w_I} \left( v^2 \theta_{\lambda} - \theta_{\lambda-\alpha} + 
    v^2\theta_{s(\lambda) +\alpha} \right) T_z C_{w_J}. 
\end{equation}
The derivation of~\eqref{eq:x4} from~\eqref{eq:8} is the same as in the
proof of \cite[Proposition 2.1]{nelsenram:kostka}. After relabeling the
elements in $\CH$ and replacing $t$ by $v^2$,~\eqref{eq:x4} is the same as
the recursion formula at the end of the proof of \cite[Proposition
2.1]{nelsenram:kostka}. It follows that if $\lambda$ is in $X(T)$ with
$d=\langle \lambda, \check \alpha \rangle \geq 0$, then there are
polynomials $p_0$, $p_1$, \dots, $p_{\lfloor d/2 \rfloor}$ in $\BBZ[v^2]$
such that
\begin{equation}
  \label{eq:x5}
  C_{w_I} \theta_{s(\lambda)} T_zC_{w_J}  = C_{w_I}
  \bigg( \sum_{j=0}^{\lfloor d/2\rfloor} p_j \theta_{\lambda -j\alpha} 
  \bigg) T_z C_{w_J}. 
\end{equation}
The polynomials $p_j$ do not depend on $\lambda$ and are given by the
formula in \cite[Proposition 2.1]{nelsenram:kostka} with $t$ replaced by
$v^2$.

Now if $\mu$ is in $X(T)$ and $\langle \mu ,\check \alpha \rangle <0$, then
using~\eqref{eq:x5} with $s(\lambda)=\mu$ we see that $C_{w_I} \theta_\mu
T_z C_{w_J}$ may be rewritten as an $A$-linear combination of terms $C_{w_I}
\theta_{\mu'} T_z C_{w_J}$ where $\langle \mu', \check \alpha \rangle \geq
0$ and $\langle \mu' ,\check \beta \rangle \geq \langle \mu ,\check \beta
\rangle$ for all $s_\beta$ with $\beta$ in $\Pi$. It follows that $\CE$
spans $\CHIJ$, as claimed.

\section{Equivariant \texorpdfstring{$K$}{}-theory of
  \texorpdfstring{$\XIJ$}{}} \label{sec:wideiso}

In this section we compute $K^{\Gbar}(\XIJ)$ in terms of $\CHIJ$, using
$\CH$ and $K^{\Gbar}(Z)$, and thus complete the proof
of~\autoref{thm:main}. We begin with $K^{\Gbar}(\XIJ_z)$.

\subsection{\texorpdfstring{$K^{\Gbar}(\XIJ_z)$}{} and the Springer
  resolution of \texorpdfstring{$\SN_z$}{}} \label{ssec:xijz}

In this subsection we prove~\autoref{thm:ost}.

Suppose $z$ is in $\WIJ$. We need to show that there is a commutative
diagram of $R(\Gbar)$-modules
\begin{equation*}
  \xymatrix{ K^{\Gbar}(Z_z) \ar[r]_{\cong} \ar[d]^{\eta^z_*} &
    K^{\Lbar_z}(\SNt_z) \ar[d]^{(p_z)_*} \\ 
    K^{\Gbar}(\XIJ_z) \ar[r]_{\cong} & K^{\Lbar_z}(\SN_z), }  
\end{equation*}
where the horizontal maps are isomorphisms, $p_z$ is the Springer resolution
of the nilpotent cone $\SN_z$ of $\fl_z$, and the vertical maps are
surjections.

Set
\begin{equation*}
  P_z=P_I\cap {}^zP_J  \quad \text{and} \quad V_z=(L_I\cap {}^zU_J)
  (U_I\cap {}^zL_J) (U_I\cap {}^zU_J).  
\end{equation*}
Then 
\begin{equation}
  \label{eq:pz}
  P_z= L_z V_z=L_z (L_I\cap {}^zU_J) (U_I\cap {}^zL_J) (U_I\cap {}^zU_J),  
\end{equation}
$L_z$ is a Levi subgroup of $P_z$, and $V_z$ is the unipotent radical of
$P_z$ (see~\cite[\S69B] {curtisreiner:methodsII}). The Lie algebra analog of
the factorization~\eqref{eq:pz} is
\begin{equation}
  \label{eq:qf}
  \fp_z= \fl_z +\fv_z=\fl_z+ (\fl_I\cap z\fu_J) +(\fu_I\cap z\fl_J)+
  (\fu_I\cap z\fu_J).  
\end{equation}
Set $\fu_z=\fu\cap \fl_z$. It follows from~\eqref{eq:qf} that $\fu\cap
z\fu =\fu_z+\fv_z$. 

The rule $(x, gB, gzB)\mapsto (x, gP_I, gzP_J)$ defines a surjective,
$G$-equivariant morphism from $Z_z$ onto the $G$-orbit of $(P_I, zP_J)$ in
$G/P_I \times G/P_J$. The fibre over $(P_I, zP_J)$ is
\begin{align*}
  F&=\{\, (x, pB, pzB)\in Z_z \mid p\in P_z,\ p\inverse
  x \in \fu \cap z\fu\,\} \\
  &= \{\, (x, hB, hzB)\in Z_z \mid h\in L_z,\ h\inverse
  x \in \fu_z+\fv_z\,\}.
\end{align*}
Thus, $Z_z\cong G\times^{P_z}F$. Projection onto the second and third
factors is a surjective, $P_z$-equivariant morphism from $F$ onto the
$P_z$-orbit of $(B, zB)$ in $G/B \times G/B$ with fibre over $(B, zB)$
isomorphic to $\fu_z+\fv_z$. Thus, $F\cong P_z\times^{B\cap {}^zB}
(\fu_z+\fv_z)$.  Define
\[
\ptilde\colon F\to \SNt_z\quad \text{by}\quad \ptilde(h(x+v),hB, hzB)=(hx,
hB_z),
\]
for $x$ in $\fu_z$, $v$ in $\fv_z$, and $h$ in $L_z$, and define
\[
i_z\colon \SNt_z\to F\quad\text{by}\quad i_z(x, hB_z)= (x, hB, hzB).
\]
Then $\ptilde$ and $i_z$ are well-defined morphisms that may be identified
with the morphisms $P_z\times^{B\cap {}^zB} (\fu_z+\fv_z)\to L_z\times^{B_z}
\fu_z$ and $L_z\times^{B_z} \fu_z \to P_z\times^{B\cap {}^zB} (\fu_z+\fv_z)$
given by $hu *(x+v)\mapsto h*x$ and $h*x\mapsto h*x$ for $h$ in $L_z$, $u$
in $V_z$, $x$ in $\fu_z$, and $v$ in $\fv_z$. Thus, $\ptilde \colon F\to
\SNt_z$ is an $\Lbar_z$-equivariant vector bundle with fibre $\fv_z$ and
zero section $i_z$.

Next, the projection of $\XIJ_z$ onto the second and third factors is a
surjective, $G$-equivariant morphism onto the $G$-orbit of $(P_I, zP_J)$ in
$G/P_I \times G/P_J$. The fibre over $(P_I, zP_J)$ is
\begin{align*}
  F^{IJ}&=\{\, (x, P_I, zP_J)\in \XIJ_z \mid x \in \SN_z+\fv_z \,\}.
\end{align*}
Thus, $\XIJ_z \cong G\times^{P_z} (\SN_z+\fv_z)$ and $F^{IJ}\cong
\SN_z+\fv_z$. Define
\[
\ptilde^{IJ}\colon F^{IJ}\to \SN_z\quad \text{by}\quad \ptilde^{IJ}(x+v,P_I,
zP_J)=x,
\]
for $x$ in $\SN_z$ and $v$ in $\fv_z$, and define
\[
i_z^{IJ}\colon \SN_z\to F^{IJ} \quad \text{by}\quad i_z^{IJ}(x)= (x, P_I,
zP_J).
\]
Obviously the morphism $\ptilde^{IJ}$ is an $\Lbar_z$-equivariant vector
bundle with fibre $\fv_z$ and zero section $i_z^{IJ}$.

Finally, let $\tilde \eta^z\colon F\to F^{IJ}$ be the restriction of
$\eta^z$. Then the diagram
\begin{equation}
  \label{eq:co1}
  \vcenter{\vbox{
      \xymatrix{ \SNt_z\ar[r]^-{i_z} \ar[d]^{p_z} & F\ar@{^{(}->}[r]
        \ar[d]^{\tilde \eta^z} & Z_z\ar[d]^{\eta^z}\\ 
        \SN_z\ar[r]^-{i_z^{IJ}} & F^{IJ}\ar@{^{(}->}[r] &\XIJ_z}  
    }}
\end{equation}
commutes. Moreover, one checks that the left-hand square is cartesian.

Now applying $K^{\Gbar}$ and $K^{\Lbar_z}$ to~\eqref{eq:co1} we get
\[
\xymatrix{K^{\Gbar}(Z_z) \ar[r]^-{\res_z}\ar[d]^{\eta^z_*} &
  K^{\Lbar_z}(F) \ar[r]^-{i_z^*} \ar[d]^{\tilde \eta^z_*} &
  K^{\Lbar_z}(\SNt_z) \ar[d]^{(p_z)_*} \\
  K^{\Gbar}(\XIJ_z) \ar[r]^-{\res_z^{IJ}} & K^{\Lbar_z}(F^{IJ})
  \ar[r]^-{(i_z^{IJ})^*} & K^{\Lbar_z}(\SN_z), }
\]
where $\res_z=\res_{F}$ is defined using the isomorphism $Z_z\cong
G\times^{P_z}F$ and $\res_z^{IJ}=\res_{F^{IJ}}$ is defined using the
isomorphism $\XIJ_z\cong G\times^{P_z}F^{IJ}$. The left-hand square commutes
by the naturality of $\res$. For the right-hand square, the diagram
\[
\xymatrix{F \ar[r]^-{\tilde p} \ar[d]^{\tilde \eta^z} & \SNt_z
  \ar[d]^{p_z} \\
  F^{IJ} \ar[r]^-{\tilde p^{IJ}}& \SN_z}
\]
is cartesian and $\tilde p$ and $\tilde p^{IJ}$ are vector bundles, so
$\tilde \eta^z_* \tilde p^* = (\tilde p^{IJ})^* (p_z)_*$. By the Thom
isomorphism in equivariant $K$-theory, $i_z^*= (\tilde p^*)\inverse$ and
$(i_z^{IJ})^* = ((\tilde p^{IJ})^*)\inverse$, so $(i_z^{IJ})^* \tilde
\eta^z_*= (p_z)_* i_z^*$. Finally, by~\autoref{lem:ostlem} $(p_z)_*$ is a
surjection. Therefore, $\tilde \eta^z_*$ and $\eta^z_*$ are surjections as
well.

\subsection{The sequence (\ref{eq:exz}) is exact} \label{ssec:ex}

\begin{proposition}\label{pro:k1}
  Suppose $H$ is a linear algebraic group and that $Y$ is an $H$-variety
  such that $H$ acts on $Y$ with finitely many orbits. Then $K^H_1(Y)=0$.
\end{proposition}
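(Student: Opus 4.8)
The plan is to stratify $Y$ by its finitely many $H$-orbits and to compute $K^H_1(Y)$ by ascending this stratification one orbit at a time: the localization long exact sequence in equivariant $K$-theory lets one reduce, by induction on the number of orbits, to the case of a single $H$-orbit, and that case is then reduced, via the standard change-of-groups isomorphisms, to the vanishing of $K_1$ of a point.

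First I would enumerate the $H$-orbits $O_1,\dots,O_n$ on $Y$ in an order refining the closure partial order, so that $Y_{\ssleq i}:=O_1\cup\cdots\cup O_i$ is closed in $Y$ for every $i$ (possible because the closure of an $H$-orbit is a union of $H$-orbits). Then $Y_{\ssleq i-1}$ is closed in $Y_{\ssleq i}$ with open complement $O_i$, and the localization long exact sequence in equivariant $K$-theory (Thomason~\cite{thomason:algebraic}) supplies the exact segment
\[
K^H_1(Y_{\ssleq i-1})\longrightarrow K^H_1(Y_{\ssleq i})\longrightarrow K^H_1(O_i)\longrightarrow K^H(Y_{\ssleq i-1})\longrightarrow K^H(Y_{\ssleq i})\longrightarrow K^H(O_i)\longrightarrow 0 .
\]
Granting that $K^H_1$ vanishes on a single orbit, an induction on $i$ gives $K^H_1(Y_{\ssleq i})=0$ for all $i$, and $i=n$ yields $K^H_1(Y)=0$. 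The tail of the same sequence also gives the short exact sequences $0\to K^H(Y_{\ssleq i-1})\to K^H(Y_{\ssleq i})\to K^H(O_i)\to 0$, from which the exactness of~\eqref{eq:exz} follows.

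For the base case, a single $H$-orbit is $H$-equivariantly isomorphic to $H/C$ with $C$ the stabilizer of a point, a closed subgroup of $H$. The $H$-equivariant coherent sheaves on $H/C$ are the $C$-equivariant coherent sheaves on a point, and, writing $C=C_{\operatorname{u}}\rtimes C_{\red}$, one passes to the reductive factor; at the level of the first higher $K$-group, the isomorphisms recalled before~\autoref{thm:main} (Thomason~\cite[Proposition~6.2]{thomason:algebraic}, \cite[\S5.2]{chrissginzburg:representation}) give $K^H_1(H/C)\cong K^C_1(\point)\cong K^{C_{\red}}_1(\point)$, and the argument is completed by the vanishing $K^{C_{\red}}_1(\point)=0$. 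The step I expect to be the main obstacle is precisely this last vanishing: it is where the chosen model of equivariant $K$-theory intervenes, being immediate in the topological equivariant theory from the Atiyah--Segal identification of $K^*$ of a point with the representation ring, concentrated in degree~$0$; one must make sure that this is the version available in the present setting. The remaining ingredients — the choice of orbit order, the induction, and reading off~\eqref{eq:exz} — are routine.
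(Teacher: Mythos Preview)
Your argument is essentially the paper's own: induct on the number of orbits using the localization long exact sequence, reducing to the transitive case. The only cosmetic difference is that the paper peels off an \emph{open} orbit at each step (so the closed complement has fewer orbits), whereas you build up along an increasing chain of closed unions; these are the same induction run in opposite directions.

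Where your write-up diverges is in the base case. The paper does not attempt to compute $K^H_1(H/H_0)$; it simply records that this vanishing is known and cites \cite[\S1.3\,(p)]{kazhdanlusztig:langlands}. You instead unwind the induction/restriction isomorphisms down to $K^{C_{\red}}_1(\point)$ and flag the model-dependence of its vanishing. That caution is well placed: in Quillen's algebraic $K$-theory one has $K^{C_{\red}}_1(\point)\cong \bigoplus_{\text{irr.}} K_1(\BBC)\cong \bigoplus_{\text{irr.}} \BBC^*\neq 0$, so the vanishing genuinely requires the (topological) framework of \cite{kazhdanlusztig:langlands}, which is exactly why the paper cites that reference rather than arguing further. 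With that citation in place your proof is complete and coincides with the paper's; without it, the last step does not go through in the purely algebraic model.
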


\begin{proof}
  If $H$ acts transitively with point stabilizer $H_0$, then $Y\cong
  H/H_0$ and the result is known (see~\cite[\S1.3
  (p)]{kazhdanlusztig:langlands}). In the general case, choose an open
  orbit $\CO$ in $Y$. Then there is an exact sequence
  \[
  \dotsm \to K^H_1(Y\setminus \CO) \to K^H_1(Y) \to K^H_1(\CO) \to \dotsm .
  \]
  By induction on the number of orbits, $K^H_1(Y\setminus \CO) =0$. We have
  already observed that $K^H_1(\CO)=0$. Thus, $K^H_1(Y)=0$. \qed
\end{proof}

\begin{lemma}\label{lem:k1}
  Suppose $I, J\subseteq S$, and $z\in \WIJ$. Then $K^{\Gbar}_1(\XIJ_z)=0$.
\end{lemma}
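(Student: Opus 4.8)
The plan is to reduce the vanishing of $K^{\Gbar}_1(\XIJ_z)$ to the corresponding statement for the nilpotent cone $\SN_z$ of the reductive group $L_z$, where finiteness of orbits is classical, and then to invoke \autoref{pro:k1}. (One cannot in general apply \autoref{pro:k1} to $\XIJ_z$ directly, since $\Gbar$ need not act on $\XIJ_z$ with finitely many orbits.) Concretely, I would reuse the geometric identifications established in \autoref{ssec:xijz} during the proof of \autoref{thm:ost}: there $\XIJ_z\cong G\times^{P_z}F^{IJ}$ as $\Gbar$-varieties, with $F^{IJ}$ an $\Lbar_z$-equivariant vector bundle over $\SN_z$ with zero section $i_z^{IJ}$, and the composite of $\res_z^{IJ}$ with $(i_z^{IJ})^*$ identifies $K^{\Gbar}(\XIJ_z)$ with $K^{\Lbar_z}(\SN_z)$.

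The first step is to note that every isomorphism in this chain is an instance of a statement valid in all degrees of equivariant $K$-theory, not merely for $K_0$. Thomason's \cite[Proposition 6.2]{thomason:algebraic} is an equivalence of the categories of equivariant coherent sheaves; the reduction $K^{\Cbar}(F)\cong K^{\Cbar_{\red}}(F)$ of \cite[\S5.2]{chrissginzburg:representation} rests on homotopy invariance along the unipotent radical; and $(i_z^{IJ})^*$ is the Thom isomorphism for the bundle $\ptilde^{IJ}$, i.e.\ homotopy invariance for a vector bundle. Hence the same composite furnishes an isomorphism $K^{\Gbar}_1(\XIJ_z)\cong K^{\Lbar_z}_1(\SN_z)$.

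The second step is finiteness of orbits on the base. Since $L_z$ is reductive (see \cite[\S69B]{curtisreiner:methodsII}), it acts on $\SN_z\subseteq\fl_z$ with finitely many orbits, by the classical theory of nilpotent orbits; the extra factor in $\Lbar_z=L_z\times\BBC^*$ acts only by scalars, so $\Lbar_z$ too acts on $\SN_z$ with finitely many orbits. Then \autoref{pro:k1} gives $K^{\Lbar_z}_1(\SN_z)=0$, and combining the two steps yields $K^{\Gbar}_1(\XIJ_z)=0$. The only point calling for care is the bookkeeping in the first step: verifying that the identifications assembled for $K_0$ in \autoref{ssec:xijz} are sufficiently functorial to carry over verbatim to $K_1$; the remainder is a direct appeal to Thomason's theorem, homotopy invariance, and the finiteness of nilpotent orbits.
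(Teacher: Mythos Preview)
Your proposal is correct and follows essentially the same route as the paper: reduce $K^{\Gbar}_1(\XIJ_z)$ to $K^{\Lbar_z}_1(\SN_z)$ via the identifications of \autoref{ssec:xijz} (noting these hold for all $K_i$, cf.\ \cite[\S5.2, 5.4]{chrissginzburg:representation}), then invoke finiteness of nilpotent orbits and \autoref{pro:k1}. Your write-up is somewhat more explicit about why each isomorphism persists in higher $K$-theory, but the argument is the same.
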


\begin{proof}
  The constructions used in the proof of~\autoref{thm:ost} apply to the
  functors $K_i^{\Gbar}$ for $i\geq 0$ (see \cite[\S5.2,
  5.4]{chrissginzburg:representation}) and give isomorphisms
  \[
  \xymatrix{ K_1^{\Gbar}(\XIJ_z) \ar[r]^-{\res_z^{IJ}}_-{\cong} &
    K_1^{\Lbar_z}(F^{IJ}) \ar[r]^-{(i_z^{IJ})^*}_-{\cong} &
    K_1^{\Lbar_z}(\SN_z) .}
  \]
  Because $\Lbar_z$ acts on $\SN_z$ with finitely many orbits, it follows
  from~\autoref{pro:k1} that $K^{\overline{L_{z}}}_1 (\SN_z)=0$. \qed
\end{proof}

The fact that sequence~\eqref{eq:exz} is exact follows immediately
from~\autoref{lem:k1} and the long exact sequence in equivariant $K$-theory.

\subsection{The isomorphism \texorpdfstring{$\CHIJ_{\ssleq z}\cong
    K^{\Gbar}(\XIJ_{\ssleq z})$}{}} \label{ssec:isoz1}

The rest of this section is devoted to the proof
of~\autoref{thm:wideiso2}. We first define the maps in~\eqref{eq:wideiso},
\[
\xymatrix{%
  \CH_{\ssleq z} \ar[r]^-{} \ar[d]^{\chi^{\ssleq z}}& K^{\Gbar}(Z_{\ssleq
    z}) \ar[r]^-{r_z^*} \ar[d]^{\eta^{\ssleq z}_*} & K^{\Gbar}(Z_z) \ar[r]
  \ar[d]^{\eta^z_*} &  K^{\Lbar_z}(\SNt_z) \ar[d]^{(p_z)_*} \\
  \CHIJ_{\ssleq z} \ar[r] & K^{\Gbar}(\XIJ_{\ssleq z})
  \ar[r]^-{(r_z^{IJ})^*} & K^{\Gbar}(\XIJ_z) \ar[r] &
  K^{\Lbar_z}(\SN_z), }
\]
and show that the diagram commutes.

The middle square is induced by the cartesian diagram~\eqref{eq:cart} and
the right-hand square is as in~\autoref{thm:ost}. Both of these squares
commute.

\subsection{The left-hand square in diagram
  (\ref{eq:wideiso})} 

We observed after~\eqref{eq:8} that if $\lambda$ is in $X(T)$, $y$ is in
$\WIJ$, and $w$ is in the double coset $W_IyW_J$, then $C_{w_I}
\theta_\lambda T_{w}C_{w_J}$ is in the span of $\{\, C_{w_I} \theta_\mu T_y
C_{w_J} \mid \mu \in X(T)\,\}$. Therefore,
$\chi^{IJ}(\CH_{w})\subseteq\CHIJ_y$. It follows that $\chi^{\ssleq
  z}(\CH_{\ssleq z})\subseteq \CHIJ_{\ssleq z}$. In particular,
$\chi^{\ssleq z}\colon \CH_{\ssleq z}\to \CHIJ_ {\ssleq z}$ is defined.

Consider the commutative diagram
\begin{equation*}
  \xymatrix{%
    \CH_z\ar@{^{(}->}[r] \ar[d]^{\chi^z} &
    \CH_{\ssleq z} \ar[d]^{\chi^{\ssleq z}}\\   
    \CHIJ_z\ar@{^{(}->}[r]  & \CHIJ_{\ssleq z}, }  
\end{equation*}
where the horizontal maps are the inclusions and $\chi^z$ is the restriction
of $\chi^{IJ}$ to $\CH_z$. It is clear that $\chi^y(\CH_y)=\CHIJ_y$ for $y$
in $\WIJ$ and so $\chi^{\ssleq z}$ is surjective.

The left-hand square in diagram~\eqref{eq:wideiso} is
\begin{equation*}
  \xymatrix{%
    \CH_{\ssleq z} \ar[r]^-{\varphi_z} \ar[d]^{\chi^{\ssleq z}} &
    K^{\Gbar}(Z_{\ssleq z}) \ar[d]^{\eta^{\ssleq  z}_*} \\  
    \CHIJ_{\ssleq z} \ar[r]^-{\psi_z} & K^{\Gbar}(\XIJ_{\ssleq z}), }   
\end{equation*}
where $\varphi_z$ and $\psi_z$ are defined below. 

Recall that for $w$ in $W$ $j_w\colon Z_{\ssleq w}\to Z$ is the
inclusion. Similarly, let $j^{IJ}_z$ denote the inclusion $\XIJ_{\ssleq
  z}\to \XIJ$ for $z$ in $\WIJ$. The maps $\varphi_z$ and $\psi_z$ are the
restrictions of $\varphi$ and $\psi^{IJ}$, respectively, in the sense that
$(j_z)_* \circ \varphi_z$ is the restriction of $\varphi$ to $\CH_{\ssleq
  z}$ and $(j^{IJ}_z)_* \circ \psi_z$ is the restriction of $\psi^{IJ}$ to
$\CHIJ_{\ssleq z}$. In order to prove~\autoref{thm:wideiso2} we need a
formula for $\varphi_z$, and so we define $\varphi_z$ explicitly, show that
$(j_z)_* \circ \varphi_z$ is the restriction of $\varphi$ to $\CH_{\ssleq
  z}$, and then define $\psi_z$.

For $w$ in $W$, let $q_{w,1} \colon Z_w\to G/B$ by $q_{w,1}(x,gB, gwB)= gB$.
Then $q_{w,1}$ is a $\Gbar$-equivariant affine space bundle over $G/B$ and
so $q_{w,1}^*\colon K^{\Gbar}(G/B) \to K^{\Gbar}( Z_w)$ is an
$R(\Gbar)$-module isomorphism.

\begin{theorem}\label{thm:basis}
  Suppose $w$ is in $W$. There is an $A$-module isomorphism
  \[
  \varphi_w\colon \CH_{\ssleq w} \to K^{\Gbar}(Z_{\ssleq w})
  \]
  such that
  \begin{enumerate}
  \item $(j_w)_* \varphi_w\colon \CH_{\ssleq w}\to K^{\Gbar}(Z)$ is the
    restriction of $\varphi$ to $\CH_{\ssleq w}$, and \label{it:phi1}
  \item for $\lambda$ in $X(T)$, $r_w^* \varphi_w(\theta_\lambda T_w)=
    \epsilon_w q_{w,1}^* [\CL_\lambda]$. \label{it:phi2}
  \end{enumerate}
  In particular, $\{\, r_w^* \varphi_w(\theta_\lambda T_w)\mid \lambda\in
  X(T) \,\}$ is an $A$-basis of $K^{\Gbar}(Z_w)$.
\end{theorem}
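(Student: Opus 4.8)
The plan is to build $\varphi_w$ by induction on $w$ in the Bruhat order on $W$, using the Kazhdan–Lusztig analysis of $K^{\Gbar}(Z)$ from \cite[\S3.17]{kazhdanlusztig:langlands} and \cite[\S8.6]{lusztig:bases} as the backbone. The key structural input is the short exact sequence
\[
\xymatrix{0 \ar[r] & K^{\Gbar}(Z_{\sslt w}) \ar[r]^-{()_*} & K^{\Gbar}(Z_{\ssleq w}) \ar[r]^-{r_w^*} & K^{\Gbar}(Z_{w}) \ar[r] & 0,}
\]
which is exact because $K^{\Gbar}_1(Z_w)=0$ (this is the $I=J=\emptyset$ instance of \autoref{lem:k1}, where $\XIJ_w = Z_w$). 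Combined with the identification $q_{w,1}^*\colon K^{\Gbar}(G/B)\xrightarrow{\cong} K^{\Gbar}(Z_w)$ and the standard fact $K^{\Gbar}(G/B)\cong R(\Tbar)\cong A[X(T)]$, this shows $K^{\Gbar}(Z_w)$ is free over $A$ with basis $\{\, q_{w,1}^*[\CL_\lambda]\mid\lambda\in X(T)\,\}$, and then by induction $K^{\Gbar}(Z_{\ssleq w})$ is free over $A$ with a basis obtained by splicing together the bases of the graded pieces $K^{\Gbar}(Z_y)$ for $y\le w$. On the algebra side, $\CH_{\ssleq w}=\sum_{y\le w}\CH_y$ with $\CH_y=\operatorname{span}_A\{\,\theta_\lambda T_y\mid\lambda\in X(T)\,\}$, and the argument following~\eqref{eq:8} shows that modulo $\CH_{\sslt w}$ the set $\{\,\theta_\lambda T_w\mid\lambda\in X(T)\,\}$ is an $A$-basis of $\CH_{\ssleq w}/\CH_{\sslt w}$. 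So both sides carry a filtration by $y\le w$ whose graded pieces are free $A$-modules of the same rank, indexed compatibly by $X(T)$.

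Next I would show that $\varphi$ respects these filtrations, i.e. $\varphi(\CH_{\ssleq w})\subseteq (j_w)_*K^{\Gbar}(Z_{\ssleq w})$, so that it restricts to a map $\CH_{\ssleq w}\to K^{\Gbar}(Z_{\ssleq w})$ which we call $\varphi_w$ (using that $(j_w)_*$ is injective, again by \autoref{lem:k1}). For this one uses Lusztig's description \cite[\S8.6]{lusztig:bases}: the Bernstein generator $\theta_\lambda$ corresponds to the class of the line bundle $\CL_\lambda$ pulled back to $Z_1\cong\FNt$ (pushed into $Z$), $T_w$ for $w\in W$ is supported on $Z_{\ssleq w}$, and the convolution product respects supports in the sense that $\star$ maps $K^{\Gbar}(Z_{\ssleq y})\times K^{\Gbar}(Z_{\ssleq w})$ into $K^{\Gbar}(Z_{\ssleq yw})$ when $\ell(yw)=\ell(y)+\ell(w)$. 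Since $\theta_\lambda T_w$ is a product of a class supported on $Z_1$ with $T_w$ supported on $Z_{\ssleq w}$, its image lies in $K^{\Gbar}(Z_{\ssleq w})$; this gives \autoref{it:phi1} once $\varphi_w$ is defined as the corestriction. Then I would verify \autoref{it:phi2} by computing $r_w^*\varphi_w(\theta_\lambda T_w)$ directly: this is a localization computation on the open cell $Z_w$, where the convolution defining $\varphi(\theta_\lambda T_w)$ collapses — the $T_w$ factor restricts on $Z_w$ to (a sign times) the structure sheaf twisted by the affine-bundle pullback, and the $\theta_\lambda$ factor contributes $q_{w,1}^*[\CL_\lambda]$. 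The sign $\epsilon_w=(-1)^{\ell(w)}$ arises exactly as in Lusztig's normalization relating $\tilde T_w=v^{-\ell(w)}T_w$ to the geometric classes; one checks it inductively via the $s$-by-$s$ case $w\mapsto ws$ with $\ell(ws)=\ell(w)+1$.

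Finally, to conclude that $\varphi_w$ is an \emph{isomorphism} and that $\{\,r_w^*\varphi_w(\theta_\lambda T_w)\mid\lambda\in X(T)\,\}$ is an $A$-basis of $K^{\Gbar}(Z_w)$: by \autoref{it:phi2} the composite $\CH_{\ssleq w}\xrightarrow{\varphi_w}K^{\Gbar}(Z_{\ssleq w})\xrightarrow{r_w^*}K^{\Gbar}(Z_w)$ carries $\theta_\lambda T_w$ to $\epsilon_w q_{w,1}^*[\CL_\lambda]$, hence carries the basis $\{\,\theta_\lambda T_w\mid\lambda\in X(T)\,\}$ of $\CH_{\ssleq w}/\CH_{\sslt w}$ bijectively onto the basis $\{\,q_{w,1}^*[\CL_\lambda]\mid\lambda\in X(T)\,\}$ of $K^{\Gbar}(Z_w)$ (up to the unit $\epsilon_w$); so $\varphi_w$ induces an isomorphism on the top graded piece. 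By the induction hypothesis $\varphi_y$ is an isomorphism for $y<w$, and these fit into a map of short exact sequences, so the Five Lemma forces $\varphi_w$ to be an isomorphism. The main obstacle is the sign-and-normalization bookkeeping in \autoref{it:phi2}: one must pin down precisely how the convolution class $\varphi(\theta_\lambda T_w)$ restricts to the open stratum $Z_w$, tracking Lusztig's conventions carefully enough to recover the factor $\epsilon_w$ rather than some other unit; everything else is a routine filtration-plus-Five-Lemma argument once the exactness from $K_1=0$ is in hand.
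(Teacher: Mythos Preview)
Your proposal is correct and follows essentially the same strategy as the paper: use the exactness coming from $K_1^{\Gbar}(Z_w)=0$, show that $\varphi$ respects the Bruhat filtration so that it corestricts to a map $\varphi_w$, compute the restriction to the open stratum $Z_w$, and conclude by a graded/Five-Lemma argument.

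The one substantive difference in execution is in how part~\autoref{it:phi2} is established. The paper does not carry out an inductive $w\mapsto ws$ sign check as you propose; instead it develops the $K^{\Gbar}(Z_1)$-module structures $\star_w$ and $\star_w'$ on $K^{\Gbar}(Z_{\ssleq w})$ and $K^{\Gbar}(Z_w)$ (the intersection/Tor machinery of \autoref{sec:kthy}) and gives an explicit formula $\varphi_w(\theta_\lambda T_y)=(d_1)_*q^*[\CL_\lambda]\star_w (j_y^w)_*(\xi_y)$, where $\xi_y\in K^{\Gbar}(Z_{\ssleq y})$ is Lusztig's unique lift of $\varphi(T_y)$. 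Then $r_w^*$ is $K^{\Gbar}(Z_1)$-linear (\autoref{lem:starlin}), Lusztig's \cite[\S8.9]{lusztig:bases} gives $r_w^*(\xi_w)=\epsilon_w[\BBC_{Z_w}]$ directly (this is where the sign comes from, with no further induction needed), and \autoref{pro:convlin}\,\autoref{i:conv3} together with a small cartesian square reduces the remaining convolution to $q_{w,1}^*[\CL_\lambda]$. Your ``localization on the open cell'' is exactly this computation in outline; the paper's module-structure formalism is what makes the step ``convolution collapses on $Z_w$'' rigorous without an $s$-by-$s$ induction.
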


\begin{proof}
  Lusztig has shown (see \cite[Lemma 8.9]{lusztig:bases}) that there is a
  unique element $\xi_y$ in $K^{\Gbar}(Z_{\ssleq y})$ such that
  $(j_y)_*(\xi_y)=\varphi(T_y)$.  Let $d_1\colon \FNt\to Z_1$ be the
  ``diagonal'' isomorphism given by $d_1(x, gB)= (x, gB, gB)$. With this
  notation, using the $K^{\Gbar}(Z_1)$-module structure $\star_w$ on
  $K^{\Gbar}(Z_{\ssleq w})$, define
  \[
  \varphi_w\colon \CH_{\ssleq w}\to K^{\Gbar}(Z_{\ssleq w})
  \quad\text{by}\quad \varphi_w (\theta_\lambda T_y)= (d_1)_*
  q^*[\CL_\lambda] \star_w (j_y^w)_* (\xi_y)
  \]
  for $\lambda$ in $X(T)$ and $y$ in $W$ with $y\leq w$.

  Set $d=j_1d_1\colon \FNt \to Z$, so $d(x,gB)=(x, gB, gB)$. By the
  definition of $\varphi$, for $\lambda$ in $X(T)$, $\varphi(
  \theta_\lambda)= d_* q^*[\CL_\lambda]$. Thus, using
  equation~\eqref{eq:star1} and~\autoref{lem:starlin} we have
  \begin{align*}
    (j_w)_* \varphi_w (\theta_\lambda T_y) & = (j_w)_* \big((d_1)_*
    q^*[\CL_\lambda] \star_w (j_y^w)_* \xi_y\big) \\
    & = (j_1)_*(d_1)_* q^*[\CL_\lambda] \star (j_w)_*(j_y^w)_* \xi_y 
    = d_* q^*[\CL_\lambda] \star (j_y)_* \xi_y \\ & = \varphi(\theta_\lambda)
    \star \varphi(T_y) = \varphi(\theta_\lambda T_y) .
  \end{align*}
  This proves the first statement.

  To prove~\autoref{it:phi2}, let $\BBC_{Z_w}$ be the trivial line bundle on
  $Z_w$ and let
  \[
  p_{w,1}\colon Z_w\to \FNt\quad\text{by}\quad p_{w,1} (x,gB, gwB)= (x, gB).
  \]
  Then $q_{w,1}=q p_{w,1}$. Set $V_w=(\Ztilde_1\times \FNt) \cap (\FNt
  \times \Ztilde_{ w})$ and let $p_{12}'\colon V_w \to Z_1$ and
  $p_{13}'\colon V_w \to Z_w$ be the obvious projections.  It is
  straightforward to check that $p_{w,1}$ and $p_{12}'$ are smooth,
  that $p_{13}'$ is an isomorphism, and that the diagram
  \[
  \xymatrix{ Z_w \ar[r]^-{(p_{13}')\inverse} \ar[d]^{p_{w,1}} & V_w
    \ar[d]^{p_{12}'} \\ 
    \FNt \ar[r]^-{d_1} &Z_1}
  \]
  is cartesian. Thus $(p_{12}')^* (d_1)_*= (p_{13}')\inverse_* p_{w,1}^*$.
  Using the $K^{\Gbar}(Z_1)$-module structure $\star_w'$ on
  $K^{\Gbar}(Z_{w})$, we have
  \begin{align*} 
    r_w^* \varphi_w(\theta_\lambda T_w)&= r_w^* \big( (d_1)_*
    q^*[\CL_\lambda] \star_w \xi_w \big)&& \\
    &= (d_1)_* q^*[\CL_\lambda] \star_w' r_w^*( \xi_w) &&
    \text{\autoref{lem:starlin}}  \\
    &= (d_1)_* q^*[\CL_\lambda] \star_w' \epsilon_w [\BBC_{Z_w}] &&
    \text{\cite[\S8.9]{lusztig:bases}}  \\
    &= \epsilon_w\ (p_{13}')_*\, (p_{12}')^* (d_1)_* q^*[\CL_\lambda] &&
    \text{\autoref{pro:convlin}\,\autoref{i:conv3}}  \\
    &= \epsilon_w\ p_{w,1}^*\, q^*[\CL_\lambda] && \\
    &= \epsilon_w\ q_{w,1}^*\, [\CL_\lambda]. &&
  \end{align*}
  This completes the proof of the second statement.

  The last statement in the theorem follows from~\autoref{it:phi2} and the
  fact that $q_{w,1}^*$ is an isomorphism.

  Choose a linear order on the interval $[1,w]$ in the Bruhat poset of $W$
  that extends the Bruhat order. This linear order determines gradings on
  $\CH_{\ssleq w}$ and $K^{\Gbar}(Z_{\ssleq w})$. As $\{\,
  \theta_\lambda T_y\mid \lambda\in X(T) \,\}$ is an $A$-basis of $\CH_{y}$
  and $\{\, r_y^* \varphi_y(\theta_\lambda T_y)\mid \lambda\in X(T) \,\}$ is
  an $A$-basis of $K^{\Gbar}(Z_y)$ for $y$ in $W$, the associated graded map
  $\gr \varphi_w$ is an isomorphism. Thus, $\varphi_w$ is an
  isomorphism. \qed
\end{proof}

It follows from the theorem that for $z$ in $\WIJ$,
\[
\etaIJ_* \varphi(\CH_{\ssleq z})= \etaIJ_* (j_z)_*( K^{\Gbar}(Z_{\ssleq z})
)= (j^{IJ}_z)_* \eta^{\ssleq z}_*( K^{\Gbar}(Z_{\ssleq z})) \subseteq
(j^{IJ}_z)_* ( K^{\Gbar}(\XIJ_{\ssleq z})).
\]
On the other hand, we have seen that $\etaIJ_* \varphi =\psi^{IJ} \chi^{IJ}$
and that $\chi^{\ssleq z}$ is surjective, so
\[
\etaIJ_* \varphi(\CH_{\ssleq z})=\psi^{IJ} \chi^{IJ}(\CH_{\ssleq z}) =
\psi^{IJ} \chi^{\ssleq z} (\CH_{\ssleq z})= \psi^{IJ} (\CHIJ_{\ssleq z}).
\]
Therefore, $\psi^{IJ} (\CHIJ_{\ssleq z}) \subseteq (j^{IJ}_z)_* (
K^{\Gbar}(\XIJ_{\ssleq z}))$ and so there is an $A$-module
homomorphism $\psi_z\colon \CHIJ_{\ssleq z}\to K^{\Gbar}(\XIJ_{\ssleq
  z})$ such that $\eta^{\ssleq z}_* \varphi_z= \psi_z \chi^{\ssleq
  z}$. In particular, the maps $\varphi_z$ and $\psi_z$ in the
left-hand square in~\eqref{eq:wideiso} are defined and the diagram
commutes.

\subsection{Proof of~\autoref{thm:wideiso2}} \label{ssec:isoz3}

In this subsection we complete the proof of~\autoref{thm:wideiso2}. The
arguments above show that diagram~\eqref{eq:wideiso} commutes.

To prove~\autoref{thm:wideiso2}\,\autoref{i:wi2}, let $f_1\colon \CH_{\ssleq
  z} \to K^{\Lbar_z}(\SNt_z)$ be the composition across the top row
in~\eqref{eq:wideiso}. Then $f_1= i_z^* \res_z r_z^* \varphi_z$, where
$i_z^*$, $\res_z$, and $\varphi_z$ are isomorphisms and $r_z^*$ is
surjective. If $y<z$, then $\varphi_w(\CH_y)\subseteq K^{\Gbar}(Z_{\ssleq
  y})$ and so it follows from the exactness of the top row of
diagram~\eqref{eq:exwz} that $\varphi_z(\CH_y)$ is in the kernel of
$r_z^*$. Therefore, $\CH_y$ is in the kernel of $f_1$.

By~\autoref{thm:basis}\,\autoref{it:phi2}, $r_z^* \varphi_z(\theta_\lambda
T_z)=\epsilon_z\ q_{z,1}^* ([\CL_\lambda])$, and so
\[
f_1(\theta_\lambda T_z)=\epsilon_z\ i_z^* \res_z \ q_{z,1}^*
([\CL_\lambda]).
\]  
Let $g_z\colon F\to Z_z$ be the inclusion and let $f_z\colon L_z/B_z\to G/B$
be the canonical embedding. Then the diagram
\[
\xymatrix{\SNt_z\ar[r]^-{i_z} \ar[d]^{q_z} & F \ar[r]^-{g_z} &
  Z_z\ar[d]^{q_{z,1}} \\
  L_z/B_z \ar[rr]^{f_z} && G/B }
\]
commutes. The restriction map $\res_z\colon K^{\Gbar}(Z_z)\to
K^{\Lbar_z}(F)$ is induced by the functor $g_z^*$ from $\Gbar$-equivariant
coherent sheaves on $Z_z$ to $\Lbar_z$-equivariant sheaves on
$F$. Therefore,
\[
i_z^* \res_z \ q_{z,1}^* ([\CL_\lambda])= [ i_z^* g_z^* q_{z,1}^*
(\CL_\lambda)] = [q_z^* f_z^* (\CL_\lambda)] = [q_z^* (\CL_\lambda^z)]
=q_z^* [\CL_\lambda^z]
\]
and so $f_1(\theta_\lambda T_z)=\epsilon_z q_z^* [\CL_\lambda^z]$.

To prove~\autoref{thm:wideiso2}\,\autoref{i:wi3}, let $f_2\colon
\CHIJ_{\ssleq z} \to K^{\Lbar_z}(\SN_z)$ be the composition across the
bottom row in~\eqref{eq:wideiso}. Then $f_2= (i_z^{IJ})^* \res_z^{IJ}
(r_z^{IJ})^* \psi_z$ and $f_2 \chi^{\ssleq z}= (p_z)_* f_1$.  If $y$ is in
$\WIJ$ and $y<z$, then $\psi_z(\CHIJ_y)\subseteq K^{\Gbar}(\XIJ_{\ssleq y})$
and so it follows from the exact sequence~\eqref{eq:exz} that
$\psi_z(\CHIJ_y)$ is in the kernel of $(r_z^{IJ})^*$. Therefore, $\CHIJ_y$
is in the kernel of $f_2$. Suppose $\lambda$ is in $X(T)$. Then
\[
f_2(C_{w_I} \theta_\lambda T_zC_{w_J})= f_2 \chi^{\ssleq z}(
\theta_\lambda T_z)= (p_z)_* f_1 ( \theta_\lambda T_z)= \epsilon_z (p_z)_*
q_z^*([\CL^z_\lambda]).
\]
As observed before the statement of~\autoref{thm:wideiso2}, $\{\, C_{w_I}
\theta_\lambda T_zC_{w_J} \mid \lambda \in X(T)^+_z\,\}$ is a $A$-basis of
$\CHIJ_z$, and by~\autoref{lem:ostlem}, $\{\, \epsilon_z (p_z)_*
q_z^*([\CL^z_\lambda])\mid \lambda\in X(T)^+_z \,\}$ is an $A$-basis of
$K^{\Lbar_z}(\SN_z)$. It follows that $f_2$ is an isomorphism.

Finally, to prove~\autoref{thm:wideiso2}\,\autoref{i:wi1} it remains to show
that $\eta^{\ssleq z}_*$ is surjective and that $\psi_{z}$ is an
isomorphism. The fact that $\eta^{\ssleq z}_*$ is surjective was established
in~\autoref{cor:surj}. Since $\psi_z \chi^{\ssleq z}= \eta^{\ssleq z}_*
\varphi_z$, we see that $\psi_z$ is a surjection as well. Induction on the
length of $z$ and statement~(\ref{i:wi3}) of the theorem show that
\[
\{\, \psi_z(C_{w_I} \theta_\lambda T_y C_{w_J}) \mid y\in \WIJ,\ y\leq
z,\ \lambda\in X(T)_{y}^+ \,\}
\] 
is a linearly independent subset of $K^{\Gbar}(\XIJ_{\ssleq z})$ and so
$\psi_z$ is an injection. This completes the proof
of~\autoref{thm:wideiso2}.

\bigskip\noindent {\bf Acknowledgments}: This work was partially supported
by a grant from the Simons Foundation (Grant \#245399 to J. Matthew
Douglass).  The authors acknowledge the financial support of a DFG grant for
the enhancement of bilateral cooperation and the DFG-priority program
SPP1388 ``Representation Theory.''  The authors would like to thank Victor
Ostrik for clarifying the assertions in \cite{ostrik:equivariant}, William
Graham and an anonymous referee for help with the proof
of~\autoref{pro:special}, and the same anonymous referee for numerous
suggestions that substantially improved the paper.




\end{document}